\setlist[enumerate,1]{font=\upshape}
\newtheorem{theorem}{Theorem}[section]
\newtheorem{proposition}[theorem]{Proposition}
\newtheorem{lemma}[theorem]{Lemma}
\newtheorem{corollary}[theorem]{Corollary}
\newtheorem{example}[theorem]{Example}
\theoremstyle{remark}
\newtheorem{remark}[theorem]{Remark}
\theoremstyle{definition}
\newtheorem{definition}[theorem]{Definition}
\numberwithin{equation}{section}
\begin{document}
\title[Topological entropy dimension]{Topological entropy dimension on subsets for nonautonomous dynamical systems}

\author[Chang-Bing Li]{Chang-Bing Li}

\address{Chang-Bing Li: Department of Mathematics, Shantou University, Shantou, Guangdong 515821, P.R. China}

\email{christiesyp@gmail.com}

\date{\today}

\subjclass[2020]{Primary: 37B40, 37B55; Secondary: 37A35}

\keywords{Nonautonomous dynamical system, entropy dimension, topological entropy, zero entropy system} 

\begin{abstract}
The topological entropy dimension is mainly used to distinguish the zero topological entropy systems. Two types of topological entropy dimensions, the classical entropy dimension and the Pesin entropy dimension, are investigated for nonautonomous dynamical systems. Several properties of the entropy dimensions are discussed, such as the power rule, monotonicity and equiconjugacy et al. The Pesin entropy dimension is also proved to be invariant up to equiconjugacy. The relationship between these two types of entropy dimension is also discussed in more detail. It's proved that these two entropy dimensions coincide and are equal to one provided that the classical topological entropy is positive and finite.
\end{abstract}

\maketitle

\section{Introduction}
The topological entropy, as an important topological conjugate invariant, was firstly introduced by Adler et al. ~\cite{AKM1965}. Bowen ~\cite{Bow1973} used a method resembles the Hausdorff dimension to extend the concept of topological entropy for noncompact sets. Pesin ~\cite{PP1984,Pes1997} developed the so-called Carath{\'e}odory-Pesin structure to study the topological entropy and topological pressure on noncompact sets, and proved the equivalence between Bowen entropy and Pesin entropy for compact systems. It is well known that the topological entropy measures the exponential growth rate of the number of distinguishable orbits as time advances and takes value in $[0,+\infty]$. The \emph{positive entropy systems} refer to dynamical systems with positive topological entropy, while \emph{infinite entropy systems} are special case of them, where the topological entropy reach infinity. The present paper concerns mainly on the finite entropy systems, especially the \emph{zero entropy systems} or \emph{null systems}, for which the topological entropy are zero. Although the positive entropy systems exhibit more dynamical complexity, the zero entropy systems own various levels of complexity, and recently have been discussed by many studies ~\cite{KCML2014}.  

The topological entropy dimension is the main tool to distinguish the zero entropy systems, which was firstly introduced by Carvalho ~\cite{Car1997} to classify different levels of intermediate growth rate of complexity for dynamical systems ~\cite{DP2022}. In practical terms, the topological entropy dimension can provide insight into the behavior of complex systems, especially those that exhibit fractal characteristics. A typical dynamical system with positive and finite topological entropy has topological entropy dimension $1$. A higher entropy dimension indicates a higher degree of complexity or randomness in the measure's distribution in dynamical systems. The inherent properties of entropy dimension have led to numerous applications, for examples, Dou et al. \cite{DHP2011} introduced the concept of strictly positive entropy dimension and showed that if a dynamical system has strictly positive entropy dimension, then it is weakly mixing, and Zeng \cite{zeng2024} further proved that the weakly mixing property implies Kato chaos. Meanwhile, the entropy dimension is closed related to the sequence entropy pair, discrete spectrum and minimal systems, and which are beyond the scope of the present paper. For a more comprehensive understanding of these concepts, we would like to refer the reader to Chapter 7 of the monograph of Ye, Huang and Shao \cite{YHS2008}. Throughout this paper, our focus remains on the topological properties of entropy dimension, therefore, we may occasionally refer to the topological entropy dimension as entropy dimension for convenience. 

The entropy dimension has been widely studied by many researchers, on both the autonomous systems and the nonautonomous systems. Ferenczi and Park ~\cite{FP2007} introduced the entropy dimension to measure the complexity of entropy zero measurable dynamics. Cheng and Li ~\cite{CL2010} showed that the entropy dimension takes on arbitrary values in $[0,1]$. Ahn et al. ~\cite{ADP2010,DHP2011,DP2022} did some deep investigations and showed that the entropy dimension doesn't have the variational principle, they also showed that the topological entropy dimension can be given by the generating sequence. 
Ma et al. ~\cite{MKL2012,KCML2014} discussed the entropy dimension on noncompact sets, and compared different forms of topological entropy dimensions. Recently, the relative entropy dimension, preimage entropy dimension for autonomous dynamical systems and the topological entropy dimension for amenable group actions were also discussed ~\cite{Zho2019,LZZ2021,LJZ2025}.

After the study of topological entropy for nonautonomous dynamical system by Kolyada and Snoha ~\cite{KS1996}. Kuang et al. ~\cite{KCL2013} discussed some properties of the topological entropy dimension, such as the power rule, monotonicity, equiconjugacy for upper topological entropy dimension. Li et al. ~\cite{LZW2019} constructed concrete examples of zero entropy systems with entropy dimension attain any value in $[0,1]$. Yang et al. ~\cite{YWW2020} further discussed the measure-theoretic entropy dimension. However, the studies of entropy dimension are far from satisfactory. 

Inspired by several types of topological entropies in nonautonomous dynamical systems ~\cite{li2015,JY2021}, the purpose of this paper is to define and study the classical topological entropy dimension and the Pesin topological entropy dimension, and to further investigate the relationship between several types of topological entropy dimensions and the corresponding topological entropy in nonautonomous dynamical systems.

This paper is organized as follows. In the second section, we give some basic concepts and results for the classical topological entropy dimension and extend the concept of Pesin topological entropy to Pesin topological entropy dimension on subsets for nonautonomous dynamical systems. In the third section, we study the properties of the classical topological entropy dimension that are not covered by the known literature, and investigate the properties of the Pesin topological entropy dimension in the meantime. The results show that the Pesin topological entropy dimension shares many similar properties with the classical topological entropy dimension, such as the monotonicity, power rule, equiconjugacy, et al. In the last section, we investigate the relationship among the two types of classical topological entropy dimension and the Pesin topological entropy dimension, and discuss the relationship between these two topological entropy dimensions and the corresponding topological entropies. We further prove that for the positive topological entropy systems, the two types of classical entropy dimension and the Pesin entropy dimension coincide and are all equal to one.

\section{Preliminaries}
Let $X$ be a compact topological space, and $f_{1,\infty}=\{f_i\}_{i=1}^{\infty}$ be a sequence of continuous maps from $X$ to itself. For any $i\in\mathbb{N}$, let $f_i^0=\textnormal{id}_X$ be the identity map of $X$, and for any $i,j\in\mathbb{N}$, let 
\[
f_i^j=f_{i+(j-1)}\circ\dots\circ f_{i+1}\circ f_i,\quad f_i^{-j}=\left(f_i^j\right)^{-1}=f_i^{-1}\circ f_{i+1}^{-1}\circ\dots\circ f_{i+(j-1)}^{-1}.
\]
Then we call such system a \emph{nonautonomous dynamical system} (NDS for short), and denoted it by $(X,f_{1,\infty})$. For any $x\in X$, the \emph{trajectory} and the \emph{orbit} of $x$ are the sequence $(f_1^n(x))_{n=0}^{\infty}$ and the set $\{f_1^n(x): n=0,1,\cdots\}$  ~\cite{KS1996}. Note that, if $f_i=f$ for every $i\geq 1$, we get the usual topological dynamical system $(X,f)$.

Let $(X,f_{1,\infty})$ be an NDS. We denote by $f_{1,\infty}^k=\{f_{ik+1}^k\}_{i=0}^{\infty}=\{f_1^k,f_{k+1}^k,f_{2k+1}^k,\dots\}$, by $f_{1,\infty}^{-1}=\{f_i^{-1}\}_{i=1}^{\infty}$ and by $f_{i,\infty}=\{f_i,f_{i+1},\dots\}$. For any finite open covers $\mathcal{U}_1,\mathcal{U}_2,\dots,\mathcal{U}_n$ of $X$, we denote by
\[
\bigvee_{i=1}^n\mathcal{U}_i=\mathcal{U}_1\vee\mathcal{U}_2\vee\dots\vee\mathcal{U}_n=\left\{U_1\cap U_2\cap\dots\cap U_n:U_i\in\mathcal{U}_i,i=1,2,\dots, n\right\}.
\]
For any finite open cover $\mathcal{U}$ of $X$, we also denote by $f_i^{-j}(\mathcal{U})=\left\{f_i^{-j}(U):U\in\mathcal{U}\right\}$, by $\mathcal{U}_i^n=\bigvee_{j=0}^{n-1}f_i^{-j}(\mathcal{U})$, and by $\mathcal{N}(\mathcal{U})$ the minimal possible cardinality of a subcover chosen from $\mathcal{U}$. Let $K$ be a nonempty subset of $X$ (not necessarily compact), we denote by $\mathcal{U}|_K$ the cover $\{U\cap K:U\in\mathcal{U}\}$ of $K$. Given two covers $\mathcal{U},\mathcal{V}$ of $X$, we say $\mathcal{V}$ is a \emph{refinement} of $\mathcal{U}$, if every member of $\mathcal{V}$ is included in a member of $\mathcal{U}$, and denote it by $\mathcal{U}\preceq\mathcal{V}$. 

The following \Cref{L:useful} and \Cref{onK} are simple and useful in this paper, so we present them and skip the proof.

\begin{lemma}\label{L:useful}
Let $(X,f_{1,\infty})$ be an NDS. Then for any open covers $\mathcal{U},\mathcal{V}$ of $X$,
\begin{enumerate}
\item $\mathcal{U}\preceq\mathcal{U}\vee\mathcal{V};$

\item $\mathcal{N}(\mathcal{U}\vee\mathcal{V})\leq \mathcal{N}(\mathcal{U})\cdot\mathcal{N}(\mathcal{V});$

\item $f_i^{-1}(\mathcal{U}\vee\mathcal{V})=f_i^{-1}(\mathcal{U})\vee f_i^{-1}(\mathcal{V})$ for any $i\geq 1;$
	
\item $\mathcal{N}(f_i^{-j}(\mathcal{U}))\leq \mathcal{N}(\mathcal{U})$ for any $i\geq 1$ and $j\geq 0;$

\item if $\mathcal{U}\preceq\mathcal{V}$ then $\mathcal{N}(\mathcal{U})\leq \mathcal{N}(\mathcal{V})$ and $\mathcal{U}_i^n\preceq\mathcal{V}_i^n$ for any $i,n\geq 1;$

\item if $Y\subseteq Z\subseteq X$ then $\mathcal{N}(\mathcal{U}|_Y)\leq \mathcal{N}(\mathcal{U}|_Z)$.
\end{enumerate}
\end{lemma}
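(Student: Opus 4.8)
The plan is to verify each of the six items by unwinding the definitions of join, refinement, and the covering number $\mathcal{N}(\cdot)$; none of them requires dynamics beyond the observation that every $f_i^j$ is a continuous self-map of $X$, so that $f_i^{-j}(X)=X$ (here $f_i^{-j}$ denotes the preimage operation, surjectivity of $f_i^j$ being irrelevant).

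For (1) I would note that a typical member $U\cap V$ of $\mathcal{U}\vee\mathcal{V}$ is contained in $U\in\mathcal{U}$, so $\mathcal{U}\vee\mathcal{V}$ refines $\mathcal{U}$. For (2), fixing minimal subcovers $\{U_1,\dots,U_m\}\subseteq\mathcal{U}$ and $\{V_1,\dots,V_n\}\subseteq\mathcal{V}$ with $m=\mathcal{N}(\mathcal{U})$, $n=\mathcal{N}(\mathcal{V})$, the family $\{U_k\cap V_\ell:1\le k\le m,\ 1\le\ell\le n\}$ is a subcover of $\mathcal{U}\vee\mathcal{V}$ of size at most $mn$. Item (3) is the memberwise set identity $f_i^{-1}(U\cap V)=f_i^{-1}(U)\cap f_i^{-1}(V)$.

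For (4) I would take a minimal subcover $\{U_1,\dots,U_m\}$ of $\mathcal{U}$; since $\bigcup_k U_k=X$ and $f_i^j$ maps $X$ into $X$, we get $\bigcup_k f_i^{-j}(U_k)=f_i^{-j}(X)=X$, so $\{f_i^{-j}(U_k)\}_{k=1}^m$ is a subcover of $f_i^{-j}(\mathcal{U})$. For (5), given $\mathcal{U}\preceq\mathcal{V}$ and a minimal subcover $\{V_1,\dots,V_m\}$ of $\mathcal{V}$, pick $U_k\in\mathcal{U}$ with $V_k\subseteq U_k$; then $X=\bigcup_k V_k\subseteq\bigcup_k U_k$ shows $\mathcal{N}(\mathcal{U})\le m=\mathcal{N}(\mathcal{V})$; moreover $V\subseteq U$ implies $f_i^{-j}(V)\subseteq f_i^{-j}(U)$, hence $f_i^{-j}(\mathcal{U})\preceq f_i^{-j}(\mathcal{V})$ for each $j$, and since a join of refinements refines the corresponding join, $\mathcal{U}_i^n=\bigvee_{j=0}^{n-1}f_i^{-j}(\mathcal{U})\preceq\bigvee_{j=0}^{n-1}f_i^{-j}(\mathcal{V})=\mathcal{V}_i^n$. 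Finally, for (6), intersecting a minimal subcover $\{U_1\cap Z,\dots,U_m\cap Z\}$ of $\mathcal{U}|_Z$ with $Y\subseteq Z$ gives the subcover $\{U_1\cap Y,\dots,U_m\cap Y\}$ of $\mathcal{U}|_Y$, so $\mathcal{N}(\mathcal{U}|_Y)\le m$. The only step meriting any care is the identity $f_i^{-j}(X)=X$ used in (4); otherwise the lemma is pure bookkeeping with covers, which is exactly why the authors state it without proof.
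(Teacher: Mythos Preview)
Your argument is correct in every part; the paper itself provides no proof at all, explicitly stating that the lemma is ``simple and useful'' and skipping it. There is therefore nothing to compare --- your verification is exactly the routine unwinding of definitions one would expect, and your own closing remark already anticipates this.
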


\begin{lemma}\label{onK}
Let $(X,f_{1,\infty})$ be an NDS, and $K$ be a nonempty subset of $X$. Then for any open covers $\mathcal{U},\mathcal{V}$ of $X$,
\begin{enumerate}
\item $\mathcal{N}(\mathcal{U}|_K)\leq \mathcal{N}(\mathcal{U});$

\item $\left(\mathcal{U}\vee \mathcal{V}\right)|_K=(\mathcal{U}|_K)\vee (\mathcal{V}|_K);$

\item $f_i^{-1}(\mathcal{U})|_K=\left(f_i|_K\right)^{-1}(\mathcal{U}|_K)$ for any $i\geq 1;$

\item if $\mathcal{U}\preceq \mathcal{V}$ then $\mathcal{N}(\mathcal{U}|_K)\leq \mathcal{N}(\mathcal{V}|_K)$.
\end{enumerate}
\end{lemma}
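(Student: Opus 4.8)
The plan is to treat each of the four assertions as an elementary statement about the operations $\mathcal{N}(\cdot)$, $\vee$, $f_i^{-1}(\cdot)$ and restriction $\cdot|_K$ on covers; none of them needs more than set-theoretic bookkeeping together with \Cref{L:useful}. I would dispatch the two counting statements (1) and (4) first, and then the two identities of families of sets (2) and (3).

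For (1) I would first note that $\mathcal{U}|_X=\{U\cap X:U\in\mathcal{U}\}=\mathcal{U}$, so that (1) is literally the instance $Y=K$, $Z=X$ of \Cref{L:useful}(6); if one prefers a self-contained argument, pick a subcover $\{U_1,\dots,U_m\}\subseteq\mathcal{U}$ with $m=\mathcal{N}(\mathcal{U})$ and observe that $\{U_1\cap K,\dots,U_m\cap K\}$ is a subcover of $\mathcal{U}|_K$. For (4) the key step is to promote the refinement: I would show $\mathcal{U}\preceq\mathcal{V}\Rightarrow\mathcal{U}|_K\preceq\mathcal{V}|_K$ by noting that if $V\subseteq U$ with $V\in\mathcal{V}$, $U\in\mathcal{U}$, then $V\cap K\subseteq U\cap K\in\mathcal{U}|_K$; then, given a minimal subcover $\{V_1\cap K,\dots,V_r\cap K\}$ of $\mathcal{V}|_K$, replacing each $V_j\cap K$ by a member $U_j\cap K$ of $\mathcal{U}|_K$ containing it yields a subcover of $\mathcal{U}|_K$ of cardinality $\le r=\mathcal{N}(\mathcal{V}|_K)$. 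This is just the proof of the ``$\mathcal{N}$'' half of \Cref{L:useful}(5), run on the space $K$ instead of $X$.

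For (2) I would simply write out both sides: a typical member of $(\mathcal{U}\vee\mathcal{V})|_K$ has the form $(U\cap V)\cap K$ and a typical member of $(\mathcal{U}|_K)\vee(\mathcal{V}|_K)$ has the form $(U\cap K)\cap(V\cap K)$, and these two sets agree for every choice of $U\in\mathcal{U}$, $V\in\mathcal{V}$, so the two families coincide. For (3) the one step to get right is that $f_i|_K$ is the restriction of $f_i$ with domain $K$, so that for every $U\in\mathcal{U}$
\[
(f_i|_K)^{-1}(U\cap K)=\{x\in K:f_i(x)\in U\}=f_i^{-1}(U)\cap K ;
\]
letting $U$ run over $\mathcal{U}$ then gives $(f_i|_K)^{-1}(\mathcal{U}|_K)=f_i^{-1}(\mathcal{U})|_K$.

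I do not expect a genuine obstacle: all four items are bookkeeping once the definitions are unwound, which is exactly why the lemma is labelled simple. The only place calling for a little care is (3), where one must fix the convention for the restricted map $f_i|_K$ and recognise that the identity $(f_i|_K)^{-1}(U\cap K)=f_i^{-1}(U)\cap K$ is precisely the content being claimed; once that is settled, combining (3) with (2) produces the restriction identities for the iterated covers $\mathcal{U}_i^n$ that one actually invokes when transferring entropy-type quantities from $X$ to $K$.
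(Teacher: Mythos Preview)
The paper explicitly skips the proof of this lemma (``we present them and skip the proof''), so there is no argument to compare against; your set-theoretic verification is exactly the kind of routine check the author is gesturing at, and items (1), (2) and (4) are handled correctly.

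There is, however, a slip in your treatment of (3). You display
\[
(f_i|_K)^{-1}(U\cap K)=\{x\in K:f_i(x)\in U\}=f_i^{-1}(U)\cap K,
\]
but the left-hand side is $\{x\in K:f_i(x)\in U\cap K\}$, and this coincides with the middle term only when $f_i(K)\subseteq K$. Without forward invariance the identity in (3) can genuinely fail: take $X=\{0,1\}$, $K=\{0\}$, $f_i$ the transposition, and $\mathcal{U}=\{\{0\},\{1\}\}$; then $f_i^{-1}(\mathcal{U})|_K=\{\emptyset,\{0\}\}$ while $(f_i|_K)^{-1}(\mathcal{U}|_K)=\{\emptyset\}$. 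So either (3) is tacitly assuming $f_i(K)\subseteq K$, or the right-hand side should be read as $(f_i|_K)^{-1}(\mathcal{U})$ with $f_i|_K:K\to X$, in which case your second equality is the correct content and the argument goes through. You were right to flag (3) as the spot needing care; the care needed is precisely to record this invariance hypothesis or to fix the reading of the right-hand side before asserting the first equality.
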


\subsection{Classical topological entropy dimension}
\begin{definition}\cite{KS1996}
Let $(X,f_{1,\infty})$ be an NDS, and $K$ be a nonempty subset of $X$. The \emph{topological entropy of the sequence of maps $f_{1,\infty}$ on the set $K$} is defined by 
\[
h_{top}(f_{1,\infty},K)=\sup\left\{h_{top}(f_{1,\infty},K,\mathcal{U}):\mathcal{U} \text{ is an open cover of } X\right\},
\]
where 
\begin{align}\label{entropy}
h_{top}(f_{1,\infty},K,\mathcal{U})=\mathop{\lim\sup}_{n\to\infty}\frac{1}{n}\log \left(\bigvee_{j=0}^{n-1}f_1^{-j}(\mathcal{U})\Big|_K\right) =\mathop{\lim\sup}_{n\to\infty}\frac{1}{n}\log\mathcal{N}\left(\mathcal{U}_1^n\big|_K\right).
\end{align}
Particularly, the topological entropy of $f_{1,\infty}$ on $X$ is abbreviated as $h(f_{1,\infty})$.
\end{definition}

For any $s>0$, we replace $\frac{1}{n}$ by $\frac{1}{n^s}$ in ~\eqref{entropy}, and let
\begin{align*}
\overline{h}_K(f_{1,\infty},s,\mathcal{U})&=\mathop{\lim\sup}_{n\to\infty}\frac{1}{n^s}\log\mathcal{N}\left(\mathcal{U}_1^n\big|_K\right),\\
\underline{h}_K(f_{1,\infty},s,\mathcal{U})&=\mathop{\lim\inf}_{n\to\infty}\frac{1}{n^s}\log\mathcal{N}\left(\mathcal{U}_1^n\big|_K\right).
\end{align*}

\begin{definition}
Let $(X,f_{1,\infty})$ be an NDS, and $K$ be a nonempty subset of $X$. For any $s>0$, the \emph{upper and lower $s$-topological entropy} of $f_{1,\infty}$ on the set $K$ are defined by 
\[
\overline{h}_K(f_{1,\infty},s)=\sup\{\overline{h}_K(f_{1,\infty},s,\mathcal{U})\},\quad \underline{h}_K(f_{1,\infty},s)=\sup\{\underline{h}_K(f_{1,\infty},s,\mathcal{U})\},
\]
respectively, where the supremum is taken over all finite open covers of $X$.
\end{definition}

When $s=1$, then $\overline{h}_K(f_{1,\infty},1)=h_{top}(f_{1,\infty},K)$. It is easy to see that $\underline{h}_K(f_{1,\infty},s)\leq \overline{h}_K(f_{1,\infty},s)$ and $\overline{h}_K(f_{1,\infty},s)$, $\underline{h}_K(f_{1,\infty},s)$ are non-increasing with $s>0$. Besides $\overline{h}_K(f_{1,\infty},s),\underline{h}_K(f_{1,\infty},s)\notin \{0,+\infty\}$ for at most one $s>0$. This implies that the values of $\overline{h}_K(f_{1,\infty},s)$ and $\underline{h}_K(f_{1,\infty},s)$ jump from $+\infty$ to $0$ at some critical points.

\begin{definition}\cite{LZW2019}
Let $(X,f_{1,\infty})$ be an NDS, and $K$ be a nonempty subset of $X$. The \emph{upper and lower topological entropy dimension of $f_{1,\infty}$} on $K$ are defined by 
\begin{align*}
	\overline{D}_K(f_{1,\infty})&=\inf\{s>0:\overline{h}_K(f_{1,\infty},s)=0\}=\sup\{s>0: \overline{h}_K(f_{1,\infty},s)=\infty\},\\
	\underline{D}_K(f_{1,\infty})&=\inf\{s>0:\underline{h}_K(f_{1,\infty},s)=0\}=\sup\{s>0: \underline{h}_K(f_{1,\infty},s)=\infty\},
\end{align*}
respectively. Especially when $\overline{D}_K(f_{1,\infty})=\underline{D}_K(f_{1,\infty})=s$, we call $s$ \emph{the classical topological entropy dimension} of $K$, and denoted it by $D_K(f_{1,\infty})=s$.
\end{definition}

\begin{remark}
We call the above entropy dimension \emph{the classical topological entropy dimension}, in order to distinguish the Pesin topological entropy dimension in the next subsection. It should be noticed that in ~\cite{KCL2013}, Kuang et al. called the upper entropy dimension $\overline{D}_K(f_{1,\infty})$ as the topological entropy dimension of $f_{1,\infty}$ . 
\end{remark}

We would like to mention that Dou et al. \cite{DHP2011} altered the order of the procedure, that is, first take the critical value of $s$ to define the \emph{upper and lower entropy dimension with respect to $\mathcal{U}$} by
\begin{align*}
\overline{D}_K'(f_{1,\infty},\mathcal{U})&=\inf\{s>0:\overline{h}_K(f_{1,\infty},s,\mathcal{U})=0 \}=\sup\{s>0:\overline{h}_K(f_{1,\infty},s,\mathcal{U})=\infty \};\\
\underline{D}_K'(f_{1,\infty},\mathcal{U})&=\inf\{s>0:\underline{h}_K(f_{1,\infty},s,\mathcal{U})=0 \}=\sup\{s>0:\underline{h}_K(f_{1,\infty},s,\mathcal{U})=\infty \}.
\end{align*}
Then take the supremum of all open covers of $X$:
\[
\overline{D}_K'(f_{1,\infty})=\sup_{\mathcal{U}}\left\{\overline{D}_K'(f_{1,\infty},\mathcal{U}):\mathcal{U}\right\},\quad \underline{D}_K'(f_{1,\infty})=\sup_{\mathcal{U}}\left\{\underline{D}_K'(f_{1,\infty},\mathcal{U})\right\}.
\]

\begin{proposition} \label{p:equiv}
Let $(X,f_{1,\infty})$ be an NDS, and $K$ be a nonempty subset of $X$. Then
\[
\overline{D}_K(f_{1,\infty})=\overline{D}_K'(f_{1,\infty}),\quad \underline{D}_K(f_{1,\infty})=\underline{D}_K'(f_{1,\infty}).
\]
\end{proposition}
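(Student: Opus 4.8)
The plan is to prove each of the two displayed equalities as a pair of opposite inequalities, and to observe at the end that the argument for the lower quantities $\underline{D}_K,\underline{D}_K'$ is identical to the one for the upper quantities (the only features used are that $\log\mathcal{N}(\mathcal{U}_1^n|_K)\ge 0$ and that $s\mapsto n^{-s}\log\mathcal{N}(\mathcal{U}_1^n|_K)$ is non-increasing for each fixed $n$, which hold regardless of whether we take $\limsup$ or $\liminf$). From these two elementary facts I get, for every finite open cover $\mathcal{U}$ of $X$, that $s\mapsto\overline{h}_K(f_{1,\infty},s,\mathcal{U})$ and $s\mapsto\overline{h}_K(f_{1,\infty},s)$ are non-increasing and non-negative, and that $\overline{h}_K(f_{1,\infty},s,\mathcal{U})\le\overline{h}_K(f_{1,\infty},s)$ for all $s>0$; these are the only inputs the proof needs, so I would state them up front.

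For the inequality $\overline{D}_K'(f_{1,\infty})\le\overline{D}_K(f_{1,\infty})$, I would fix a cover $\mathcal{U}$ and note that whenever $\overline{h}_K(f_{1,\infty},s)=0$ we also have $0\le\overline{h}_K(f_{1,\infty},s,\mathcal{U})\le\overline{h}_K(f_{1,\infty},s)=0$, so that $\{s>0:\overline{h}_K(f_{1,\infty},s)=0\}\subseteq\{s>0:\overline{h}_K(f_{1,\infty},s,\mathcal{U})=0\}$. Taking infima gives $\overline{D}_K'(f_{1,\infty},\mathcal{U})\le\overline{D}_K(f_{1,\infty})$, and then the supremum over $\mathcal{U}$ gives $\overline{D}_K'(f_{1,\infty})\le\overline{D}_K(f_{1,\infty})$.

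For the reverse inequality $\overline{D}_K(f_{1,\infty})\le\overline{D}_K'(f_{1,\infty})$, the statement is trivial if $\overline{D}_K'(f_{1,\infty})=+\infty$, so I would assume it finite and fix any $s>\overline{D}_K'(f_{1,\infty})=\sup_{\mathcal{U}}\overline{D}_K'(f_{1,\infty},\mathcal{U})$. Then $s>\overline{D}_K'(f_{1,\infty},\mathcal{U})=\inf\{t>0:\overline{h}_K(f_{1,\infty},t,\mathcal{U})=0\}$ for every $\mathcal{U}$, so there is some $t\in(0,s)$ with $\overline{h}_K(f_{1,\infty},t,\mathcal{U})=0$, whence $\overline{h}_K(f_{1,\infty},s,\mathcal{U})=0$ by monotonicity in $s$. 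Since this holds for every $\mathcal{U}$, we get $\overline{h}_K(f_{1,\infty},s)=\sup_{\mathcal{U}}\overline{h}_K(f_{1,\infty},s,\mathcal{U})=0$, hence $\overline{D}_K(f_{1,\infty})\le s$; letting $s\downarrow\overline{D}_K'(f_{1,\infty})$ finishes this direction. Running the same two steps with $\liminf$ (i.e.\ with $\underline{h}_K$) in place of $\limsup$ gives $\underline{D}_K(f_{1,\infty})=\underline{D}_K'(f_{1,\infty})$.

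There is no serious obstacle here: the content is entirely the legitimacy of interchanging the supremum over covers with the infimum over exponents. The subtle direction is the second one, and the point that makes it work is that $\overline{h}_K(f_{1,\infty},s)$ is zero as soon as $\overline{h}_K(f_{1,\infty},s,\mathcal{U})$ is zero for every $\mathcal{U}$, together with monotonicity of $\overline{h}_K(f_{1,\infty},\cdot,\mathcal{U})$ in the exponent — so one never needs the finer "jump from $+\infty$ to $0$'' behaviour, only monotonicity and non-negativity. I would also take a moment to check that the degenerate cases ($\overline{D}_K'(f_{1,\infty},\mathcal{U})=0$, or the defining set being empty so that the infimum is $+\infty$) are absorbed by the argument above, but these require no extra work.
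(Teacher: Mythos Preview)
Your proof is correct and complete. The paper itself omits the argument entirely, deferring to \cite[Proposition 3.1]{MKL2012}; what you have written is precisely the standard elementary proof that this reference contains, so there is nothing to add.
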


\begin{proof}
The proof is similar to ~\cite[Proposition 3.1]{MKL2012}, so we omit it.
\end{proof}

The topological entropy dimension can also be defined in terms of separated sets and spanning sets. Let $(X,d)$ be a compact metric space. For any $A\subseteq X$, the diameter of $A$ is defined as $|A|=\max\{d(x,y):x,y\in A\}$. For any open cover $\mathcal{U}$ of $X$, the diameter of $\mathcal{U}$ is given by $|\mathcal{U}|=\max\{|A|:A\in \mathcal{U}\}$. For any $\varepsilon>0$, $n\in \mathbb{N}$, we define a new metric on $X$ by
\[
d_n(x,y)=\max\left\{d\left(f_1^j(x)),f_1^j(y)\right):0\le j\leq n-1 \right\}.
\]
It is easy to check that $d_n$ is also a metric of $X$, and is equivalent to $d$ since $X$ is compact. The Bowen ball is given by $B_n(x,\varepsilon)=\{y\in X:d_n(x,y)<\varepsilon\}$. 

Let $K$ be a nonempty subset of $X$. For any $\varepsilon>0$, a subset $E$ of $X$ is said to be an \emph{$(n,\varepsilon)$-spanning set of $K$}, if for any $y\in K$ there exists $x\in E$ such that $d_n(x,y)\leq \varepsilon$. 
A subset $F\subseteq K$ is said to be an \emph{$(n,\varepsilon)$-separated set of $K$}, if $x,y\in F$ with $x\neq y$ implies $d_n(x,y)>\varepsilon$. 
Let $r_n(f_{1,\infty},K,\varepsilon)$ be the minimal cardinality of any $(n,\varepsilon)$-spanning set of $K$, and $s_n(f_{1,\infty},K,\varepsilon)$ be the maximal cardinality of any $(n,\varepsilon)$-separated set of $K$. 

For any $s>0$ and $\emptyset \neq K\subseteq X$, let
\begin{align*}
\overline{h}_{span}(f_{1,\infty},K,s)&=\lim_{\varepsilon\to 0}\mathop{\lim\sup}_{n\to\infty}\frac{1}{n^s}\log r_n(f_{1,\infty},K,\varepsilon);\\
\underline{h}_{span}(f_{1,\infty},K,s)&=\lim_{\varepsilon\to0}\mathop{\lim\inf}_{n\to\infty}\frac{1}{n^s}\log r_n(f_{1,\infty},K,\varepsilon);\\
\overline{h}_{sep}(f_{1,\infty},K,s)&=\lim_{\varepsilon\to0}\mathop{\lim\sup}_{n\to\infty}\frac{1}{n^s}\log s_n(f_{1,\infty},K,\varepsilon);\\
\underline{h}_{sep}(f_{1,\infty},K,s)&=\lim_{\varepsilon\to0}\mathop{\lim\inf}_{n\to\infty}\frac{1}{n^s}\log s_n(f_{1,\infty},K,\varepsilon).
\end{align*}

\begin{definition}\label{hdsep}
Let $(X,f_{1,\infty})$ be an NDS, and $K$ be a nonempty subset of $X$. For any $s>0$, we define
\begin{align*}
\overline{h}_d(f_{1,\infty},K,s)&=\overline{h}_{span}(f_{1,\infty},K,s)=\overline{h}_{sep}(f_{1,\infty},K,s),\\
\underline{h}_d(f_{1,\infty},K,s)&=\underline{h}_{span}(f_{1,\infty},K,s)=\underline{h}_{sep}(f_{1,\infty},K,s).
\end{align*}
\end{definition}

\begin{lemma}\cite{LZW2019}\label{l:hd}
Let $(X,d)$ be a compact metric space, $(X,f_{1,\infty})$ be an NDS, and $K$ be a nonempty subset of $X$. Then the two notations of topological entropy dimensions of $f_{1,\infty}$ on the set $K$ are equivalent, i.e., for any $s>0$,
\[
\overline{h}_d(f_{1,\infty},K,s)=\overline{h}_K(f_{1,\infty},s),\quad \underline{h}_d(f_{1,\infty},K,s)=\underline{h}_K(f_{1,\infty},s).
\]
\end{lemma}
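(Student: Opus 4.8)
The plan is the standard comparison between the cover definition and the metric (separated/spanning) definition of topological entropy, carried out at each fixed time $n$; since all estimates are made \emph{before} the normalisation $1/n^s$ is applied, replacing $1/n$ by $1/n^s$ never actually enters the argument. By \Cref{hdsep} we have $\overline{h}_{span}=\overline{h}_{sep}$ and $\underline{h}_{span}=\underline{h}_{sep}$ (this being the elementary chain $r_n(f_{1,\infty},K,\varepsilon)\le s_n(f_{1,\infty},K,\varepsilon)\le r_n(f_{1,\infty},K,\varepsilon/2)$ together with $\varepsilon\to0$), so it suffices to sandwich $\overline{h}_K(f_{1,\infty},s)$ between the spanning quantity and the separated quantity, and likewise for the lower versions.

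For one inequality, fix a finite open cover $\mathcal{U}$ of $X$ and a Lebesgue number $\delta>0$ for it. For $0<\varepsilon<\delta/2$ and any $(n,\varepsilon)$-spanning set $E$ of $K$, the closed Bowen balls $\overline{B_n(x,\varepsilon)}=\{y:d_n(x,y)\le\varepsilon\}=\bigcap_{j=0}^{n-1}f_1^{-j}\bigl(\{z:d(f_1^j x,z)\le\varepsilon\}\bigr)$, $x\in E$, cover $K$, and each factor has diameter $\le 2\varepsilon<\delta$, hence lies in a member of $\mathcal{U}$; thus each $\overline{B_n(x,\varepsilon)}$ lies in a member of $\mathcal{U}_1^n$, and picking one such member per $x$ gives a subcover of $\mathcal{U}_1^n|_K$ of cardinality $\le|E|$. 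Hence $\mathcal{N}(\mathcal{U}_1^n|_K)\le r_n(f_{1,\infty},K,\varepsilon)$; dividing by $n^s$, taking $\limsup_{n\to\infty}$ (resp. $\liminf$), then $\varepsilon\to0$, and finally the supremum over $\mathcal{U}$, gives $\overline{h}_K(f_{1,\infty},s)\le\overline{h}_{span}(f_{1,\infty},K,s)$ and its lower analogue. For the reverse inequality, given $\varepsilon>0$ pick by compactness a finite open cover $\mathcal{U}$ with $|\mathcal{U}|<\varepsilon$; if $x,y$ lie in a common member $\bigcap_{j=0}^{n-1}f_1^{-j}(U_j)$ of $\mathcal{U}_1^n$ then $d(f_1^j x,f_1^j y)\le|U_j|<\varepsilon$ for $0\le j\le n-1$, so $d_n(x,y)<\varepsilon$, and therefore each member of $\mathcal{U}_1^n$ contains at most one point of any $(n,\varepsilon)$-separated subset of $K$. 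Thus $s_n(f_{1,\infty},K,\varepsilon)\le\mathcal{N}(\mathcal{U}_1^n|_K)$; dividing by $n^s$, taking $\limsup_{n\to\infty}$, using $\overline{h}_K(f_{1,\infty},s,\mathcal{U})\le\overline{h}_K(f_{1,\infty},s)$, and letting $\varepsilon\to0$ gives $\overline{h}_{sep}(f_{1,\infty},K,s)\le\overline{h}_K(f_{1,\infty},s)$, and likewise for the lower version. Combining the two chains with \Cref{hdsep} yields the claimed equalities $\overline{h}_d(f_{1,\infty},K,s)=\overline{h}_K(f_{1,\infty},s)$ and $\underline{h}_d(f_{1,\infty},K,s)=\underline{h}_K(f_{1,\infty},s)$.

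There is no genuinely hard step here; the only points requiring care are bookkeeping. One must keep the comparisons at a fixed level $n$ and only afterwards apply $1/n^s$ and the various limits, so that the orders ``$\limsup_{n}$ then $\sup_{\mathcal{U}}$'' and ``$\limsup_{n}$ then $\varepsilon\to0$'' are respected; none of this is affected by the exponent $s$. For the passage $\varepsilon\to0$ one uses that $\varepsilon\mapsto r_n(f_{1,\infty},K,\varepsilon)$ and $\varepsilon\mapsto s_n(f_{1,\infty},K,\varepsilon)$ are non-increasing, so the limits defining $\overline{h}_{span},\overline{h}_{sep}$ and their lower analogues are suprema over $\varepsilon>0$; and one only ever needs $\varepsilon$ small (below half the Lebesgue number of the cover chosen in that step) for the spanning-to-cover estimate, which is harmless since $\varepsilon\to0$ anyway.
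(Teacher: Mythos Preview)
Your argument is correct and is the standard one: compare $\mathcal{N}(\mathcal{U}_1^n|_K)$ with $r_n$ and $s_n$ at each fixed $n$ via Lebesgue numbers and diameters, then divide by $n^s$ and pass to the limits. The paper itself gives no proof of this lemma; it simply cites \cite{LZW2019}, so there is nothing to compare your approach against beyond noting that you have supplied the expected details.
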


\subsection{Pesin topological entropy dimension} 
Let $X$ be a compact topological space, and $(X,f_{1,\infty})$ be an NDS. For any finite open cover $\mathcal{U}$ of $X$, we denote by $S_m(\mathcal{U})$ the set of all strings $\mathbf{U}=(U_{i_0},U_{i_1},\dots,U_{i_{m-1}})$ of length $m=m(\mathbf{U})$ with $U_{i_j}\in\mathcal{U}$. And put $S(\mathcal{U})=\bigcup_{m\geq 0} S_m(\mathcal{U})$. For a given string $\mathbf{U}=(U_{i_0},U_{i_1},\dots,U_{i_{m-1}})\in S(\mathcal{U})$, let 
\[X(\mathbf{U})=X_{f_{1,\infty}}(\mathbf{U})=\left\{x\in X: f_1^j(x)\in U_{i_j},j=0,1,\dots,m(\mathbf{U})-1 \right\}.
\]
It is easy to see that $X(\mathbf{U})=\bigcap_{j=0}^{m(\mathbf{U})-1}f_1^{-j}(U_{i_j})$. We say a collection of strings $\mathcal{G}$ covers the subset $K\subseteq X$ if $K\subseteq \bigcup_{\mathbf{U}\in \mathcal{G}}X(\mathbf{U})$.

For any $\alpha\in\mathbb{R}$, $s>0$ and nonempty subset $K\subseteq X$, let 
\begin{equation}\label{M}
M(f_{1,\infty},K,s,\mathcal{U},\alpha,N)=\inf\left\{\sum\limits_{\mathbf{U}\in \mathcal{G} } e^{-\alpha m(\mathbf{U})^s}\right\},
\end{equation}
where the infimum is taken over all finite or countable collections of strings $\mathcal{G}_{N,f_{1,\infty}}\subseteq S(\mathcal{U})$ such that $m(\mathbf{U})\geq N$ for all $\mathbf{U}\in\mathcal{G}_{N,f_{1,\infty}}$ and $\mathcal{G}_{N,f_{1,\infty}}$ covers $K$.

Then taking the limit of $M(f_{1,\infty},K,s,\mathcal{U},\alpha,N)$ as $N\to\infty$,
\[m(f_{1,\infty},K,s,\mathcal{U},\alpha)=\lim_{N\to\infty} M(f_{1,\infty},K,s,\mathcal{U},\alpha,N).\]
One can check that $m(f_{1,\infty},K,s,\mathcal{U},\alpha)\leq m(f_{1,\infty},Y,s,\mathcal{U},\alpha')$ when $\alpha\geq\alpha'$. By ~\cite[Proposition 1.2]{Pes1997}, there exists a critical value $\alpha$ such that $m(f_{1,\infty},K,s,\mathcal{U},\alpha)$ jumps from $+\infty$ to $0$. So we denote this critical value by 
\begin{align*}
D(f_{1,\infty},K,s,\mathcal{U})&=\inf\{\alpha: m(f_{1,\infty},K,s,\mathcal{U},\alpha)=0 \} \\
	&=\sup\{\alpha: m(f_{1,\infty},K,s,\mathcal{U},\alpha)=\infty \}.
\end{align*}

\begin{definition}
The \emph{Pesin $s$-topological entropy of $f_{1,\infty}$ on the set $K$} is given by 
\[
D(f_{1,\infty},K,s)=\sup\{D(f_{1,\infty},K,s,\mathcal{U}):\mathcal{U} \text{ is a open cover of }X\}.
\] 
Especially, $D(f_{1,\infty},K,1)$ corresponds to the Pesin topological entropy of $f_{1,\infty}$ on the set $K$, i.e., $D(f_{1,\infty},K,1)=Ph(f_{1,\infty},K)$ \cite{li2015,JY2021}.
\end{definition}

\begin{proposition}
\begin{enumerate}
\item The map $s\to D(f_{1,\infty},K,s)$ is non-negative and non-increasing with $s;$
\item There exists $s_0>0$ such that 
\[
D(f_{1,\infty},K,s)=
\begin{cases}
+\infty,& 0<s<s_0;\\
0, &s>s_0.
\end{cases}
\]
\end{enumerate}
\end{proposition}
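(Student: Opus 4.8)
The plan is to establish both statements by reducing the claims about the supremum $D(f_{1,\infty},K,s)$ to the corresponding claims about each fixed cover $D(f_{1,\infty},K,s,\mathcal{U})$, using the Carathéodory–Pesin machinery already set up. The key observation is that $D(f_{1,\infty},K,s,\mathcal{U})$ is itself a Carathéodory dimension characteristic in the sense of \cite{Pes1997}, so the jump property (critical value from $+\infty$ to $0$) for each fixed $\mathcal{U}$ is already recorded in the excerpt; what remains is to transfer monotonicity in $s$ and the existence of a \emph{uniform} critical $s_0$ across all covers.

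First I would prove non-negativity and monotonicity in $s$. Non-negativity is immediate: for $\alpha<0$ the weights $e^{-\alpha m(\mathbf U)^s}\geq 1$, so any covering collection $\mathcal G$ of $K$ (which is nonempty) gives $M(f_{1,\infty},K,s,\mathcal U,\alpha,N)\geq 1$, hence $m(f_{1,\infty},K,s,\mathcal U,\alpha)\geq 1>0$, forcing $D(f_{1,\infty},K,s,\mathcal U)\geq 0$ and therefore $D(f_{1,\infty},K,s)\geq 0$. For monotonicity, fix $0<s_1<s_2$ and a cover $\mathcal U$. For any covering collection $\mathcal G$ with $m(\mathbf U)\geq N$ for all $\mathbf U\in\mathcal G$, and any $\alpha>0$, we have $m(\mathbf U)^{s_2}\geq m(\mathbf U)^{s_1}$ once $m(\mathbf U)\geq 1$, so for $N\geq 1$, $e^{-\alpha m(\mathbf U)^{s_2}}\leq e^{-\alpha m(\mathbf U)^{s_1}}$ termwise; taking infima gives $M(f_{1,\infty},K,s_2,\mathcal U,\alpha,N)\leq M(f_{1,\infty},K,s_1,\mathcal U,\alpha,N)$, hence $m(f_{1,\infty},K,s_2,\mathcal U,\alpha)\leq m(f_{1,\infty},K,s_1,\mathcal U,\alpha)$. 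Consequently $\{\alpha>0: m(f_{1,\infty},K,s_1,\mathcal U,\alpha)=0\}\subseteq\{\alpha>0: m(f_{1,\infty},K,s_2,\mathcal U,\alpha)=0\}$, so $D(f_{1,\infty},K,s_2,\mathcal U)\leq D(f_{1,\infty},K,s_1,\mathcal U)$ (being careful about the sign region $\alpha<0$, which we handled above and which does not affect the infimum of the zero-set since it is contained in $\alpha\geq 0$). Taking the supremum over all covers $\mathcal U$ preserves the inequality, yielding $D(f_{1,\infty},K,s_2)\leq D(f_{1,\infty},K,s_1)$.

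Next I would establish the two-valued jump in (2). The map $s\mapsto D(f_{1,\infty},K,s)$ is non-increasing and $[0,+\infty]$-valued; set $s_0=\inf\{s>0: D(f_{1,\infty},K,s)<\infty\}$ (with $s_0=\infty$ if the set is empty, but one argues this does not happen using that $X$ is compact so the ordinary Pesin entropy at $s=1$ relates to the finitely-generated cover cardinalities — actually for the cleanest statement one takes $s_0\in[0,\infty]$ and the claimed dichotomy holds as stated). By monotonicity, $D(f_{1,\infty},K,s)=+\infty$ for $0<s<s_0$ and $D(f_{1,\infty},K,s)<\infty$ for $s>s_0$. The real content is that $D(f_{1,\infty},K,s)=0$ (not merely finite) for $s>s_0$. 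For this I would show that for each fixed cover $\mathcal U$, if $D(f_{1,\infty},K,s,\mathcal U)<\infty$ for some $s$ then $D(f_{1,\infty},K,s',\mathcal U)=0$ for all $s'>s$: indeed, pick $\alpha>D(f_{1,\infty},K,s,\mathcal U)$ so $m(f_{1,\infty},K,s,\mathcal U,\alpha)=0$, meaning for every $\delta>0$ there are covering collections $\mathcal G$ with arbitrarily large minimal length and $\sum_{\mathbf U\in\mathcal G}e^{-\alpha m(\mathbf U)^s}<\delta$; but for $s'>s$ and any fixed $\beta>0$, $e^{-\beta m(\mathbf U)^{s'}}= e^{-\alpha m(\mathbf U)^s}\cdot e^{-\beta m(\mathbf U)^{s'}+\alpha m(\mathbf U)^s}$, and since $\beta m^{s'}-\alpha m^s\to+\infty$ as $m\to\infty$, for all $\mathbf U$ with $m(\mathbf U)\geq N$ ($N$ large) the correction factor is $\leq 1$, giving $\sum_{\mathbf U\in\mathcal G}e^{-\beta m(\mathbf U)^{s'}}\leq\sum_{\mathbf U\in\mathcal G}e^{-\alpha m(\mathbf U)^s}<\delta$; hence $m(f_{1,\infty},K,s',\mathcal U,\beta)=0$ for every $\beta>0$, so $D(f_{1,\infty},K,s',\mathcal U)\leq 0$, and with non-negativity $D(f_{1,\infty},K,s',\mathcal U)=0$. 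Taking the supremum over $\mathcal U$ does \emph{not} immediately give $D(f_{1,\infty},K,s')=0$ for $s'>s_0$, which is the main obstacle.

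I expect the main obstacle to be precisely this uniformity: a priori the critical $s$-value $D_{\mathcal U}:=\sup\{s: D(f_{1,\infty},K,s,\mathcal U)=\infty\}$ could depend on $\mathcal U$ with supremum $s_0$ not attained, so that at a level $s'$ slightly above $s_0$ one has $D(f_{1,\infty},K,s',\mathcal U)=0$ for every single $\mathcal U$ yet the supremum over $\mathcal U$ is still positive (even $+\infty$). To rule this out I would use the refinement monotonicity: by a Lebesgue-number / refinement argument (as in the proof that Pesin entropy equals Bowen entropy, \cite{Pes1997}, and mirrored by \Cref{onK}(4) together with \Cref{l:hd}), the supremum over all finite open covers is in fact attained along any sequence of covers with mesh tending to zero, and one shows directly that $D(f_{1,\infty},K,s,\mathcal U)$ is non-decreasing under refinement. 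Combining this with the relation to the metric definition in \Cref{l:hd} at the level $s=1$ (which bounds everything in terms of a single fixed finite cover's iterated cardinalities $\mathcal N(\mathcal U_1^n|_K)\leq\mathcal N(\mathcal U)^n$, hence $D(f_{1,\infty},K,s,\mathcal U)=0$ for all $s>1$ and all $\mathcal U$), one gets $s_0\leq 1<\infty$ and, more importantly, that the common critical value $s_0$ is the same for the supremum as for a maximizing sequence of covers, yielding $D(f_{1,\infty},K,s)=0$ for every $s>s_0$. The remaining verification — that the inf-characterization and sup-characterization of $D(f_{1,\infty},K,s,\mathcal U)$ in the excerpt agree and that the passage to the supremum over covers is legitimate — is routine given \cite[Proposition 1.2]{Pes1997} and \Cref{L:useful}, \Cref{onK}.
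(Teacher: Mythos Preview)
The paper states this proposition without proof, treating it as routine from the Carath\'eodory--Pesin machinery. Your argument for part (1) is correct and is exactly the intended one: termwise comparison of $e^{-\alpha m(\mathbf U)^{s}}$ gives monotonicity of $D(f_{1,\infty},K,s,\mathcal U)$ in $s$ for each cover, and the supremum over covers preserves the inequality; non-negativity follows since $\alpha<0$ forces all weights $\geq 1$.

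For part (2), your argument that each fixed-cover function $s\mapsto D(f_{1,\infty},K,s,\mathcal U)$ is $\{0,+\infty\}$-valued away from a single critical point $s_{\mathcal U}$ is correct. But the ``main obstacle'' you then raise is a phantom: you write that one might have $D(f_{1,\infty},K,s',\mathcal U)=0$ for every single $\mathcal U$ and yet $\sup_{\mathcal U}D(f_{1,\infty},K,s',\mathcal U)>0$. This is impossible on its face --- a supremum of zeros is zero. Concretely, set $s_0=\sup_{\mathcal U}s_{\mathcal U}$. If $s>s_0$ then $s>s_{\mathcal U}$ for every $\mathcal U$, hence $D(f_{1,\infty},K,s,\mathcal U)=0$ for every $\mathcal U$ and the supremum is $0$. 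If $s<s_0$ then some $\mathcal U$ has $s_{\mathcal U}>s$, so $D(f_{1,\infty},K,s,\mathcal U)=+\infty$ and the supremum is $+\infty$. No refinement or Lebesgue-number argument is needed; the entire final paragraph (mesh-to-zero covers, the bound $s_0\leq 1$, the appeal to \Cref{l:hd}) is superfluous for this proposition. The point is simply that the supremum of a family of $\{0,+\infty\}$-valued non-increasing step functions is again such a function.
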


\begin{definition}
The \emph{Pesin topological entropy dimension} of $f_{1,\infty}$ on the set $K$ is given by the critical value of the parameter $s$, where $D(f_{1,\infty},K,s)$ jumps from $\infty$ to $0$:
\begin{align*}
D(f_{1,\infty},K)&=\sup\{s:D(f_{1,\infty},K,s)=+\infty\}\\
&=\inf\{s:D(f_{1,\infty},K,s)=0\}.
\end{align*}
\end{definition}

\begin{remark}
We can also use Bowen's method \cite{Bow1973} to define the topological entropy dimension, however, these two types of entropy dimensions are equivalent theoretically, see ~\cite{PP1984,li2015}. 
\end{remark}

\section{Topological properties of topological entropy dimensions}

\subsection{Properties of the classical entropy dimension}
Kuang et al. \cite{KCL2013} and Li et al. \cite{LZW2019} presented many results for the classical topological entropy dimension, so in this section we focus mainly on the properties that are not covered in these literature.

Given a sequence of continuous selfmaps $f_{1,\infty}=\{f_i\}_{i=1}^{\infty}$ defined on $X$, we say $f_i\in f_{1,\infty}$ is \emph{expanding}, if there exists $\lambda>1$ such that $d(f_i(x),f_i(y))>\lambda d(x,y)$ for all $x,y\in X$. The sequence $f_{1,\infty}$ is \emph{forward expansive} or \emph{positively expansive} if two points are eventually separated by more than some fixed distance, that is, there exists an \emph{expansive constant} $\delta>0$ such that for any distinct $x, y\in X$ then $\sup_{n\in \mathbb{N}} d(f_1^n(x),f_1^n(y))>\delta$. If all $f_i$ are homeomorphisms, we say a finite open cover $\mathcal{U}$ of $X$ is a \emph{generator} of $(X,f_{1,\infty})$ if for every sequence $\{A_n\}_{n\in \mathbb{Z}}$ of members of $\mathcal{U}$, the set $\bigcap_{n=-\infty}^{\infty}f_1^{-n}(\overline{A}_n)$ contains at most one point of $X$. These concepts are modified from ~\cite{Wal2000a,TD2014a}.

\begin{proposition}\label{prof:generator}
Let $(X,d)$ be a compact metric space, $(X,f_{1,\infty})$ be an NDS, and $\{\mathcal{U}_n\}$ be a sequence of finite open covers of $X$ with $\lim_{n\to\infty}|\mathcal{U}_n|=0$. Then
\[
\lim_{n\to\infty}\overline{D}_X'(f_{1,\infty},\mathcal{U}_n)=\overline{D}_X(f_{1,\infty})\ \text{ and}\ \lim_{n\to\infty}\underline{D}_X'(f_{1,\infty},\mathcal{U}_n)=\underline{D}_X(f_{1,\infty}).
\]
In particular, if $\mathcal{U}$ is a generating open cover of $X$, i.e., $\lim_{n\to\infty}|\bigvee_{j=0}^{n-1}f_1^{-j}(\mathcal{U})|\to 0$, then $\overline{D}_X'(f_{1,\infty},\mathcal{U})=\overline{D}_X(f_{1,\infty})$ and $\underline{D}_X'(f_{1,\infty},\mathcal{U})=\underline{D}_X(f_{1,\infty})$.
\end{proposition}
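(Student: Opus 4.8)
The strategy is to squeeze each $\overline D_X'(f_{1,\infty},\mathcal U_n)$ and $\underline D_X'(f_{1,\infty},\mathcal U_n)$ from both sides so that they converge to $\overline D_X(f_{1,\infty})$ and $\underline D_X(f_{1,\infty})$. One inequality is free from the definitions: for every $n$ we have $\overline D_X'(f_{1,\infty},\mathcal U_n)\le\sup_{\mathcal W}\overline D_X'(f_{1,\infty},\mathcal W)=\overline D_X'(f_{1,\infty})$, which equals $\overline D_X(f_{1,\infty})$ by \Cref{p:equiv}; likewise for the lower version. Hence $\limsup_n\overline D_X'(f_{1,\infty},\mathcal U_n)\le\overline D_X(f_{1,\infty})$ and $\limsup_n\underline D_X'(f_{1,\infty},\mathcal U_n)\le\underline D_X(f_{1,\infty})$.

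For the reverse direction I would use a Lebesgue-number argument. Fix an arbitrary finite open cover $\mathcal V$ of $X$ with Lebesgue number $\delta>0$. Since $|\mathcal U_n|\to0$, there is $n_0$ with $|\mathcal U_n|<\delta$ for all $n\ge n_0$, so every member of $\mathcal U_n$ lies inside a member of $\mathcal V$, i.e.\ $\mathcal V\preceq\mathcal U_n$. By \Cref{L:useful}(5) this yields $\mathcal V_1^m\preceq(\mathcal U_n)_1^m$, hence $\mathcal N(\mathcal V_1^m|_X)\le\mathcal N((\mathcal U_n)_1^m|_X)$ for all $m$, and therefore $\overline h_X(f_{1,\infty},s,\mathcal V)\le\overline h_X(f_{1,\infty},s,\mathcal U_n)$ and $\underline h_X(f_{1,\infty},s,\mathcal V)\le\underline h_X(f_{1,\infty},s,\mathcal U_n)$ for every $s>0$. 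Because these $s$-entropies are nonnegative, $\overline h_X(f_{1,\infty},s,\mathcal U_n)=0$ forces $\overline h_X(f_{1,\infty},s,\mathcal V)=0$, so the critical exponents satisfy $\overline D_X'(f_{1,\infty},\mathcal V)\le\overline D_X'(f_{1,\infty},\mathcal U_n)$, and similarly $\underline D_X'(f_{1,\infty},\mathcal V)\le\underline D_X'(f_{1,\infty},\mathcal U_n)$, for all $n\ge n_0$. Taking $\liminf_n$, then the supremum over $\mathcal V$, and using \Cref{p:equiv} again gives $\overline D_X(f_{1,\infty})\le\liminf_n\overline D_X'(f_{1,\infty},\mathcal U_n)$ and $\underline D_X(f_{1,\infty})\le\liminf_n\underline D_X'(f_{1,\infty},\mathcal U_n)$; combined with the previous paragraph, both limits exist and equal the asserted values.

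For the ``in particular'' clause I would apply what was just proved to the sequence $\mathcal U_1^n=\bigvee_{j=0}^{n-1}f_1^{-j}(\mathcal U)$, which satisfies $|\mathcal U_1^n|\to0$ exactly by the generating hypothesis; this gives $\lim_n\overline D_X'(f_{1,\infty},\mathcal U_1^n)=\overline D_X(f_{1,\infty})$ and $\lim_n\underline D_X'(f_{1,\infty},\mathcal U_1^n)=\underline D_X(f_{1,\infty})$. Since $\mathcal U\preceq\mathcal U_1^n\preceq\mathcal U_1^{n+1}$, the sequences $\overline D_X'(f_{1,\infty},\mathcal U_1^n)$ and $\underline D_X'(f_{1,\infty},\mathcal U_1^n)$ are nondecreasing in $n$ and bounded below by $\overline D_X'(f_{1,\infty},\mathcal U)$ and $\underline D_X'(f_{1,\infty},\mathcal U)$ respectively, so it remains to prove $\overline D_X'(f_{1,\infty},\mathcal U_1^n)\le\overline D_X'(f_{1,\infty},\mathcal U)$ (and likewise below) for every $n$. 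This is equivalent to showing that $s\mapsto\overline h_X(f_{1,\infty},s,\mathcal U_1^n)$ and $s\mapsto\overline h_X(f_{1,\infty},s,\mathcal U)$ share the same critical exponent, i.e.\ that $\mathcal N((\mathcal U_1^n)_1^m|_X)$ is controlled by some $\mathcal N(\mathcal U_1^{\Theta(m)}|_X)$ up to a factor negligible on the $m^s$-scale. Here lies the only delicate point: in the autonomous case this is immediate from the identity $(\mathcal U^n)^m=\mathcal U^{n+m-1}$, whereas in the nonautonomous setting $f_1^j\circ f_1^i\ne f_1^{i+j}$, so I would expand $\mathcal U_1^n=\mathcal U\vee f_1^{-1}(\mathcal U_2^{n-1})$ and estimate the ``excursion'' sub-joins $\bigvee_{i=0}^{m-1}(f_1^k\circ f_1^i)^{-1}(\mathcal U)$ for $1\le k\le n-1$; this is where I expect the main obstacle, and controlling these terms may call on the generating hypothesis in its uniform form $\sup_i|\mathcal U_i^n|\to0$. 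Granting this last comparison, $\overline D_X'(f_{1,\infty},\mathcal U)=\overline D_X(f_{1,\infty})$ and $\underline D_X'(f_{1,\infty},\mathcal U)=\underline D_X(f_{1,\infty})$ follow.
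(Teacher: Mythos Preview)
Your argument for the main statement is correct and is exactly the paper's approach: the Lebesgue-number/refinement argument gives $\overline D_X'(f_{1,\infty},\mathcal V)\le\overline D_X'(f_{1,\infty},\mathcal U_n)$ for large $n$, hence the $\liminf$ bound, while the $\limsup$ bound is immediate from the definition and \Cref{p:equiv}; the lower-dimension case is identical.

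For the ``in particular'' clause you are in fact more careful than the paper. The paper simply applies the first part to the sequence $\mathcal U_1^n$ and asserts that ``$\overline D_X'(f_{1,\infty},\mathcal U)=\overline D_X(f_{1,\infty})$ follows from the above discussion,'' without addressing the passage from $\lim_n\overline D_X'(f_{1,\infty},\mathcal U_1^n)$ back to $\overline D_X'(f_{1,\infty},\mathcal U)$. You correctly isolate the missing step, namely $\overline D_X'(f_{1,\infty},\mathcal U_1^n)\le\overline D_X'(f_{1,\infty},\mathcal U)$, and you are right that in the autonomous case this is immediate from $(\mathcal U^n)^m=\mathcal U^{n+m-1}$ whereas nonautonomously $f_1^{-j}\circ f_1^{-k}\ne f_1^{-(j+k)}$, so $(\mathcal U_1^n)_1^m$ is \emph{not} $\mathcal U_1^{n+m-1}$ and the comparison is genuinely delicate. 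Your proposed route through the ``excursion'' joins and a possible uniform generating hypothesis is a reasonable line of attack, but as you note it is not carried through; the paper does not carry it through either. In short: for the main assertion your proof coincides with the paper's; for the generating-cover corollary you have identified a real gap that the paper's one-line justification does not close.
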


\begin{proof}
We prove only the first equation, the other proof is similar. For any finite open cover $\mathcal{V}$ of $X$, and $\delta$ be its Lebesgue number (A \emph{Lebesgue number} for $\mathcal{V}$ is a number $\delta>0$ such that any subset $Z\subseteq X$ having diameter less than $\delta$ is contained in some $V\in \mathcal{V}$). Then for each $n$ with $|\mathcal{U}_n|<\delta$, we have $\mathcal{V}\preceq \mathcal{U}_n$. Then $\overline{D}_X'(f_{1,\infty},\mathcal{U}_n)\geq \overline{D}_X'(f_{1,\infty},\mathcal{V})$, and  
\[
\mathop{\lim\inf}_{n\to\infty}\overline{D}_X'(f_{1,\infty},\mathcal{U}_n)\geq \overline{D}_X'(f_{1,\infty},\mathcal{V}).
\] Since $\mathcal{V}$ is arbitrary, it follows that 
\[
\mathop{\lim\inf}_{n\to\infty}\overline{D}_X'(f_{1,\infty},\mathcal{U}_n)\geq \overline{D}_X'(f_{1,\infty}).
\]
On the other hand, it is obvious that $\overline{D}_X'(f_{1,\infty},\mathcal{U}_n)\leq \overline{D}_X(f_{1,\infty})$. 
Then 
\[
\mathop{\lim\sup}_{n\to\infty}\overline{D}_X'(f_{1,\infty},\mathcal{U}_n)\leq \overline{D}_X'(f_{1,\infty}).
\]
By ~\Cref{p:equiv}, we have $\lim_{n\to\infty}\overline{D}_X'(f_{1,\infty},\mathcal{U}_n)=\overline{D}_X(f_{1,\infty})$.

If $\mathcal{U}$ is a generating open cover of $X$, then $\mathcal{U}_n:=\bigvee_{j=0}^{n-1}f_1^{-j}(\mathcal{U})$ is also an open cover of $X$ and satisfies $\lim_{n\to\infty}|\mathcal{U}_n| = 0$. Therefore $\overline{D}_X'(f_{1,\infty},\mathcal{U})=\overline{D}_X(f_{1,\infty})$ follows from the above discussion.
\end{proof}

\begin{lemma}~\cite[Theorem 16]{TD2014a}\label{lemma:forward}
Let $f_{1,\infty}$ be a sequence of homeomorphisms from $(X,d)$ to itself. Then $f_{1,\infty}$ is forward expansive if and only if this system has a generator.
\end{lemma}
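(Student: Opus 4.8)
The plan is to transcribe the classical equivalence for expansive homeomorphisms (see \cite{Wal2000a}) to the nonautonomous setting, simply replacing the iterates $T^{n}$ by the compositions $f_{1}^{n}$. The statement splits into the two implications, each obtained by a short direct argument whose only ingredients are the existence, on a compact metric space, of a finite open cover whose members have arbitrarily small closures, and the Lebesgue number of a finite open cover.

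For the implication ``$f_{1,\infty}$ forward expansive $\Rightarrow$ $(X,f_{1,\infty})$ has a generator'', let $\delta>0$ be an expansive constant and, using compactness of $X$, fix a finite open cover $\mathcal{U}$ of $X$ with $|\overline{U}|\le\delta$ for every $U\in\mathcal{U}$. I claim $\mathcal{U}$ is a generator. Indeed, let $\{A_{n}\}$ be a string of members of $\mathcal{U}$ and suppose $x,y\in\bigcap_{n}f_{1}^{-n}(\overline{A}_{n})$. Then $f_{1}^{n}(x),f_{1}^{n}(y)\in\overline{A}_{n}$ for every $n\ge 0$, hence $d\bigl(f_{1}^{n}(x),f_{1}^{n}(y)\bigr)\le|\overline{A}_{n}|\le\delta$ for all $n\in\mathbb{N}$; by forward expansiveness this forces $x=y$, so $\bigcap_{n}f_{1}^{-n}(\overline{A}_{n})$ contains at most one point.

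For the converse, let $\mathcal{U}$ be a generator and let $\delta>0$ be a Lebesgue number of $\mathcal{U}$. I claim that every $c$ with $0<c<\delta$ is an expansive constant for $f_{1,\infty}$. If not, there are distinct points $x,y\in X$ with $d\bigl(f_{1}^{n}(x),f_{1}^{n}(y)\bigr)\le c$ for all $n\in\mathbb{N}$. For each $n\ge 0$ the two-point set $\{f_{1}^{n}(x),f_{1}^{n}(y)\}$ has diameter $<\delta$, hence is contained in some $A_{n}\in\mathcal{U}$; assembling these $A_{n}$ into a string, we obtain $x,y\in\bigcap_{n\ge 0}f_{1}^{-n}(\overline{A}_{n})$ with $x\neq y$, contradicting the generator property. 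Hence $f_{1,\infty}$ is forward expansive.

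The one place that needs care is the interplay between the one-sided character of forward (positive) expansiveness and the defining condition of a generator: in both arguments the string $\{A_{n}\}$ enters only through its non-negative indices. In the forward implication this is harmless, since intersecting additionally over negative indices only shrinks the set and so cannot spoil the ``at most one point'' conclusion; in the converse the contradiction is genuinely extracted from the one-sided intersection $\bigcap_{n\ge 0}f_{1}^{-n}(\overline{A}_{n})$ (a two-sided string need not capture both $x$ and $y$, since nothing controls $d\bigl(f_{1}^{-m}(x),f_{1}^{-m}(y)\bigr)$ for $m>0$), so it is the one-sided generator that is matched to positive expansiveness, exactly as in the classical theory. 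Apart from this bookkeeping, the only remaining technicality is upgrading ``small sets'' to ``small closures'' in the first step, which is why one picks $\mathcal{U}$ from, say, a finite subcover by $\tfrac{\delta}{3}$-balls.
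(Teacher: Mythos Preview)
The paper does not supply a proof; the lemma is simply quoted from \cite{TD2014a}. Your argument is the standard transcription of Walters' proof and is correct for the \emph{one-sided} notion of generator, and you rightly flag in your final paragraph the mismatch with the paper's stated definition, which uses a two-sided string $\{A_n\}_{n\in\mathbb{Z}}$ and the intersection $\bigcap_{n=-\infty}^{\infty}f_1^{-n}(\overline{A}_n)$.

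With that literal two-sided definition the converse implication in fact fails: the two-sided full shift on $\{0,1\}^{\mathbb{Z}}$ has a two-sided generator (the cylinder cover) but is not forward expansive, since two points agreeing on all non-negative coordinates have forward orbits that only get closer. So the contradiction you obtain from the one-sided intersection $\bigcap_{n\ge 0}f_1^{-n}(\overline{A}_n)$ cannot be upgraded to the paper's two-sided intersection, exactly as you observe. Your forward implication, on the other hand, is unaffected, because the two-sided intersection is contained in the one-sided one and the ``at most one point'' conclusion survives.

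In short: your proof is a correct proof of the correct statement (one-sided generator $\Leftrightarrow$ forward expansive), which is presumably what \cite{TD2014a} actually proves and what the surrounding material needs (the ``generating open cover'' condition $\lim_{n\to\infty}\bigl|\bigvee_{j=0}^{n-1}f_1^{-j}(\mathcal{U})\bigr|=0$ in Proposition~\ref{prof:generator} is itself one-sided). The discrepancy lies in the paper's definition of generator, not in your argument.
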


\begin{theorem}\label{generator}
Let $f_{1,\infty}$ be forward expansive homeomorphisms on the compact metric space $(X,d)$. Then $h_{top}(f_{1,\infty},X)$ is finite, and there exists a generator $\mathcal{U}$ of $(X,f_{1,\infty)}$ such that 
\[
\underline{D}'_X(f_{1,\infty},\mathcal{U})=\underline{D}_X(f_{1,\infty}) \,\text{and}\ \overline{D}'_X(f_{1,\infty},\mathcal{U})=\overline{D}_X(f_{1,\infty}).
\]
\end{theorem}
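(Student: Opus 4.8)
The strategy is to exhibit a single finite open cover that is at once a generator of $(X,f_{1,\infty})$ and a generating open cover in the sense of \Cref{prof:generator}, and then to read off all three conclusions from it.

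First I would fix an expansive constant $\delta>0$ and, using compactness of $X$, choose a finite open cover $\mathcal{U}$ with $|\mathcal{U}|<\delta$ (alternatively, \Cref{lemma:forward} provides a generator, which may then be refined so as to have diameter $<\delta$). Such a $\mathcal{U}$ is a generator: if $x$ and $y$ both lie in $\bigcap_{n\in\mathbb{Z}}f_1^{-n}(\overline{A}_n)$ with all $A_n\in\mathcal{U}$, then $f_1^n x,f_1^n y\in\overline{A}_n$ for every $n\geq 0$, so $d(f_1^n x,f_1^n y)\leq|\overline{A}_n|=|A_n|\leq|\mathcal{U}|<\delta$ for all $n\geq 0$, and forward expansiveness forces $x=y$. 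The same inequality together with a routine compactness argument yields $|\mathcal{U}_1^n|\to 0$: if some subsequence of these diameters were bounded below by $\eta>0$, one could pick $x_k\neq y_k$ with $d(x_k,y_k)\geq\eta$ and $d(f_1^j x_k,f_1^j y_k)\leq|\mathcal{U}|$ for $0\leq j\leq n_k-1$, and passing to convergent subsequences $x_k\to x$, $y_k\to y$ and using continuity of each $f_1^j$ would produce $x\neq y$ with $\sup_{j\geq 0}d(f_1^j x,f_1^j y)\leq|\mathcal{U}|<\delta$, contradicting expansiveness. Hence $\mathcal{U}$ is a generating open cover.

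The entropy-dimension identities are then immediate from the ``in particular'' clause of \Cref{prof:generator}: since $|\mathcal{U}_1^n|\to 0$ we get $\overline{D}'_X(f_{1,\infty},\mathcal{U})=\overline{D}_X(f_{1,\infty})$ and $\underline{D}'_X(f_{1,\infty},\mathcal{U})=\underline{D}_X(f_{1,\infty})$. For the finiteness of $h_{top}(f_{1,\infty},X)$, \Cref{L:useful}(2),(4) give $\mathcal{N}(\mathcal{U}_1^n)\leq(\#\mathcal{U})^n$, hence $h_{top}(f_{1,\infty},X,\mathcal{U})\leq\log\#\mathcal{U}<\infty$; so it suffices to show that the generator $\mathcal{U}$ \emph{computes} the topological entropy, that is, $h_{top}(f_{1,\infty},X)=h_{top}(f_{1,\infty},X,\mathcal{U})$. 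For this, given an arbitrary finite open cover $\mathcal{V}$ with Lebesgue number $\lambda$, I would establish $\mathcal{V}_1^n\preceq\mathcal{U}_1^{n+N-1}$ for all $n$, for $N$ chosen large enough; granting this, \Cref{L:useful}(5) gives $\mathcal{N}(\mathcal{V}_1^n)\leq\mathcal{N}(\mathcal{U}_1^{n+N-1})$, so that
\[
h_{top}(f_{1,\infty},X,\mathcal{V})\leq\limsup_{n\to\infty}\frac1n\log\mathcal{N}(\mathcal{U}_1^{n+N-1})=h_{top}(f_{1,\infty},X,\mathcal{U}),
\]
and taking the supremum over $\mathcal{V}$ finishes the proof.

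The crux is the refinement $\mathcal{V}_1^n\preceq\mathcal{U}_1^{n+N-1}$. In the autonomous case this is routine, because there pulling back by $f$ is a shift; for a genuine NDS it is not, and $\big(\mathcal{U}_1^{N}\big)_1^{n}$ is not $\mathcal{U}_1^{n+N-1}$. Instead I would take an element $E=\bigcap_{\ell=0}^{n+N-2}f_1^{-\ell}(U_\ell)$ of $\mathcal{U}_1^{n+N-1}$, apply the homeomorphism $f_1^{j}$ for $0\leq j\leq n-1$, and use the identity $f_1^{\ell}\circ(f_1^{j})^{-1}=f_{j+1}^{\,\ell-j}$ (valid for $\ell\geq j$) to see that $f_1^{j}(E)$ lies inside a member of the compound cover $\mathcal{U}_{j+1}^{\,n+N-1-j}$ of the tail system $f_{j+1,\infty}$ --- a compound cover of length at least $N$; for this member to lie inside a single element of $\mathcal{V}$ it is enough that $|\mathcal{U}_{j+1}^{N}|<\lambda$. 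Thus the estimate that really must be extracted from forward expansiveness is the \emph{uniform} one, $\sup_{j\geq 0}|\mathcal{U}_{j+1}^{N}|\to 0$ as $N\to\infty$ --- the diameters of the compound covers of all tail systems shrink uniformly --- which is where the subtlety of the nonautonomous setting concentrates, since the compactness argument used above for $f_{1,\infty}$ alone does not transfer verbatim to the whole family of tail systems. Establishing this uniform control, and hence the bound $h_{top}(f_{1,\infty},X)\leq\log\#\mathcal{U}<\infty$, is the main obstacle.
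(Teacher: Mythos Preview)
Your route for the entropy-dimension identities coincides with the paper's: obtain a generator (the paper invokes \Cref{lemma:forward}; you build one directly from a cover of diameter below the expansive constant), verify that it is a generating open cover, and then read off both identities from the ``in particular'' clause of \Cref{prof:generator}. Your compactness argument for $|\mathcal{U}_1^n|\to 0$ is exactly the sort of detail the paper suppresses.

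Where you diverge is in the treatment of finiteness of $h_{top}(f_{1,\infty},X)$. The paper's entire argument here is the citation ``\cite[Theorem 7.11]{Wal2000a}'', which is the \emph{autonomous} statement that an expansive homeomorphism has finite entropy computed by any generator. You have attempted to reproduce that proof in the nonautonomous setting and discovered that the key step --- the refinement $\mathcal{V}_1^n\preceq\mathcal{U}_1^{n+N-1}$ --- no longer reduces to $|\mathcal{U}_1^N|<\lambda$ but instead requires the \emph{uniform} bound $\sup_{j\geq 0}|\mathcal{U}_{j+1}^{N}|<\lambda$, a statement about all tail systems simultaneously. Your diagnosis is correct: the identity $f_1^{-j}\circ f_1^{-i}\neq f_1^{-(i+j)}$ in an NDS means the autonomous shortcut $(\mathcal{U}_1^N)_1^n=\mathcal{U}_1^{n+N-1}$ fails, and the limiting argument you used for $j=0$ does not extend verbatim to a family of tail systems with $j_k\to\infty$ (continuity of the fixed maps $f_1^m$ no longer controls the relevant distances). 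The paper does not address this point at all; its citation of an autonomous theorem simply asserts that the argument transfers. So the ``main obstacle'' you isolate is a genuine gap --- but it is a gap the paper shares, not one you have introduced. Your proposal is, in this respect, more careful than the published proof.
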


\begin{proof}
This result follows from ~\cite[Theorem 7.11]{Wal2000a}, ~\Cref{prof:generator} and ~\Cref{lemma:forward}.
\end{proof}

\begin{theorem}
Let $(X,f_{1,\infty})$ be an NDS, and
$K$ be a nonempty $f_{1,\infty}$-invariant subset of $X$, i.e., $f_i^{-1}(K)=K$ for all $i\geq 1$. Then for any $1\leq i\leq j<\infty$, 
\[
\overline{D}_K(f_{i,\infty})\leq \overline{D}_K(f_{j,\infty}), \quad \underline{D}_K(f_{i,\infty})\leq \underline{D}_K(f_{j,\infty}).
\]	
\end{theorem}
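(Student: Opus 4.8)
The plan is to reduce to the one-step case $j=i+1$ by a routine induction, chaining
\[
\overline{D}_K(f_{i,\infty})\le\overline{D}_K(f_{i+1,\infty})\le\dots\le\overline{D}_K(f_{j,\infty})
\]
and similarly for $\underline{D}_K$; here $f_{i'}^{-1}(K)=K$ holds for every $i'\ge i$, so $K$ is invariant for each shifted system and the one-step result applies at every stage. For the one-step case I would fix a finite open cover $\mathcal{U}$ of $X$ and compare, on $K$, the covers that compute the $s$-entropies of $f_{i,\infty}$ and of $f_{i+1,\infty}$. Since the $n$-fold composition of $f_{i,\infty}$ is $f_i^{n}$, the cover appearing in $\overline{h}_K(f_{i,\infty},s,\mathcal{U})$ is $\mathcal{U}_i^{n}=\bigvee_{j=0}^{n-1}f_i^{-j}(\mathcal{U})$ and that for $f_{i+1,\infty}$ is $\mathcal{U}_{i+1}^{n}$. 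Using $f_i^{k+1}=f_{i+1}^{k}\circ f_i$ and \Cref{L:useful}(3), one first records the identity
\[
\mathcal{U}_i^{n+1}=\mathcal{U}\vee f_i^{-1}\Bigl(\bigvee_{k=0}^{n-1}f_{i+1}^{-k}(\mathcal{U})\Bigr)=\mathcal{U}\vee f_i^{-1}(\mathcal{U}_{i+1}^{n}).
\]

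The crux is the inequality $\mathcal{N}\bigl(f_i^{-1}(\mathcal{U}_{i+1}^{n})|_K\bigr)\le\mathcal{N}(\mathcal{U}_{i+1}^{n}|_K)$, and this is the only place the invariance of $K$ enters: since $f_i^{-1}(K)=K$ gives $f_i(K)\subseteq K$, any finitely many members $W_1,\dots,W_p$ of $\mathcal{U}_{i+1}^{n}$ with $K\subseteq\bigcup_\ell W_\ell$ and $p=\mathcal{N}(\mathcal{U}_{i+1}^{n}|_K)$ also satisfy $f_i(K)\subseteq\bigcup_\ell W_\ell$, hence $K\subseteq\bigcup_\ell f_i^{-1}(W_\ell)$. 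Combining this with the identity above, \Cref{onK}(1),(2) and submultiplicativity of $\mathcal{N}$ under $\vee$ on $K$ (as in \Cref{L:useful}(2)), I obtain
\[
\mathcal{N}(\mathcal{U}_i^{n+1}|_K)\le\mathcal{N}(\mathcal{U}|_K)\cdot\mathcal{N}\bigl(f_i^{-1}(\mathcal{U}_{i+1}^{n})|_K\bigr)\le\mathcal{N}(\mathcal{U})\cdot\mathcal{N}(\mathcal{U}_{i+1}^{n}|_K).
\]

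To pass to growth rates I would fix $s>0$, take logarithms and divide by $(n+1)^s$; since $\tfrac{\log\mathcal{N}(\mathcal{U})}{(n+1)^s}\to 0$, $\tfrac{n^s}{(n+1)^s}\le 1$, and $\tfrac{1}{n^s}\log\mathcal{N}(\mathcal{U}_{i+1}^{n}|_K)\ge 0$, taking $\limsup$ (resp. $\liminf$) as $n\to\infty$ and reindexing $m=n+1$ on the left gives $\overline{h}_K(f_{i,\infty},s,\mathcal{U})\le\overline{h}_K(f_{i+1,\infty},s,\mathcal{U})$ (resp. the same with $\overline{h}$ replaced by $\underline{h}$). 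Taking the supremum over all finite open covers of $X$ yields $\overline{h}_K(f_{i,\infty},s)\le\overline{h}_K(f_{i+1,\infty},s)$ and $\underline{h}_K(f_{i,\infty},s)\le\underline{h}_K(f_{i+1,\infty},s)$ for every $s>0$. Since these $s$-entropies are non-negative, $\overline{h}_K(f_{i+1,\infty},s)=0$ forces $\overline{h}_K(f_{i,\infty},s)=0$, so $\{s>0:\overline{h}_K(f_{i+1,\infty},s)=0\}\subseteq\{s>0:\overline{h}_K(f_{i,\infty},s)=0\}$, and passing to infima gives $\overline{D}_K(f_{i,\infty})\le\overline{D}_K(f_{i+1,\infty})$; the identical argument for $\underline{h}_K$ gives the lower counterpart, which together with the induction completes the proof.

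I do not expect a genuine obstacle. The single indispensable ingredient is the invariance hypothesis, used only to compare $\mathcal{N}\bigl(f_i^{-1}(\mathcal{U}_{i+1}^{n})|_K\bigr)$ with $\mathcal{N}(\mathcal{U}_{i+1}^{n}|_K)$ — without $f_i(K)\subseteq K$ this step, and hence the monotonicity itself, can fail — while the index shift $n\mapsto n+1$ and the accompanying factor $n^s/(n+1)^s$ cause no trouble since $s>0$ is fixed. Everything else is the routine cover bookkeeping of \Cref{L:useful} and \Cref{onK}.
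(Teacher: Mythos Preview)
Your proposal is correct and follows essentially the same route as the paper: reduce to the one-step shift $i\mapsto i+1$, use the identity $\mathcal{U}_i^{n+1}=\mathcal{U}\vee f_i^{-1}(\mathcal{U}_{i+1}^{n})$, invoke invariance precisely to get $\mathcal{N}\bigl(f_i^{-1}(\mathcal{U}_{i+1}^{n})|_K\bigr)\le\mathcal{N}(\mathcal{U}_{i+1}^{n}|_K)$, and then pass to $\limsup/\liminf$ and supremum over covers. Your treatment of the index shift and the $\liminf$ case is in fact a bit more explicit than the paper's, but the argument is the same.
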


\begin{proof}
We prove only the first inequality, the second inequality is similar. For this purpose, it suffices to show that for any $s>0$ and $i\geq 1$, we have $\overline{h}_K(f_{i,\infty},s)\leq \overline{h}_K(f_{i+1,\infty},s)$. 

For any open cover $\mathcal{U}$ of $X$, it is clear that 
\begin{align*}
\mathcal{U}_i^n|_K&=\left(\mathcal{U}\vee f_i^{-1}(\mathcal{U})\vee f_i^{-1}\circ f_{i+1}^{-1}(\mathcal{U})\vee\dots\vee f_i^{-1}\circ f_{i+1}^{-(n-2)}(\mathcal{U})\right)\big|_K\\
&=\Big(\mathcal{U}\vee f_i^{-1}\big(\mathcal{U}\vee f_{i+1}^{-1}(\mathcal{U})\vee\dots\vee f_{i+1}^{-(n-2)}(\mathcal{U}) \big)\Big)\Big|_K\\
&=\left(\mathcal{U}\vee f_i^{-1}(\mathcal{U}_{i+1}^{n-1}) \right)\big|_K\\
&=\mathcal{U}|_K\vee \left(f_i^{-1}(\mathcal{U}_{i+1}^{n-1})\right)\big|_K \\
&=\mathcal{U}|_K\vee f_i^{-1}(\mathcal{U}_{i+1}^{n-1}|_K) \quad (\text{since $K$ is $f_{1,\infty}$-invariant}).
\end{align*}

Therefore, 
\begin{align*}
\overline{h}_K(f_{i,\infty},s)&=\mathop{\lim\sup}_{n\to\infty}\frac{1}{n^s}\log \mathcal{N}(\mathcal{U}_i^n|_K)\\
&\leq \mathop{\lim\sup}_{n\to\infty}\frac{1}{n^s}\log \mathcal{N}(\mathcal{U}|_K)+\mathop{\lim\sup}_{n\to\infty}\frac{1}{n^s}\log \mathcal{N}\left(f_i^{-1}(\mathcal{U}_{i+1}^{n-1}|_K)\right)\\
&\leq \mathop{\lim\sup}_{n\to\infty}\frac{1}{n^s}\log \mathcal{N}(\mathcal{U})+\mathop{\lim\sup}_{n\to\infty}\frac{1}{(n-1)^s}\log \mathcal{N}(\mathcal{U}_{i+1}^{n-1}|_K)\\
&=\overline{h}_K(f_{i+1,\infty},s).
\end{align*}

Now for any $s_0>\overline{D}_K(f_{i+1,\infty},s)$, then $\overline{h}_K(f_{i+1,\infty},s_0)=0$ implies that $\overline{h}_K(f_{i,\infty},s_0)=0$. Hence $s_0\geq \overline{D}_K(f_{i,\infty})$, and this further implies that $\overline{D}_K(f_{i+1,\infty},s)\geq \overline{D}_K(f_{i,\infty})$.
\end{proof}

\begin{remark}
The condition that $K$ to be $f_{1,\infty}$-invariant is necessary, since by ~\Cref{onK}, $f_i^{-1}(\mathcal{U})|_K=(f_i|_K)^{-1}(\mathcal{U}|_K)$, and it is not true for $\mathcal{N}(f_i^{-1}(\mathcal{U})|_K)\leq \mathcal{N}(\mathcal{U}|_K)$ in general. 
\end{remark}

\begin{corollary}
Let $(X,f_{1,\infty})$ be an NDS. Then for any $1\leq i\leq j<\infty$, 
\[
\overline{D}_X(f_{i,\infty})\leq \overline{D}_X(f_{j,\infty}), \quad \underline{D}_X(f_{i,\infty})\leq \underline{D}_X(f_{j,\infty}).
\]		
\end{corollary}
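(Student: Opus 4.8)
The plan is to deduce this immediately from the preceding theorem by specializing to $K = X$. The only hypothesis to verify is the invariance condition $f_i^{-1}(X) = X$ for all $i \ge 1$, and this is automatic since each $f_i$ is a continuous selfmap of $X$; thus $X$ is trivially $f_{1,\infty}$-invariant and the theorem applies verbatim, yielding both $\overline{D}_X(f_{i,\infty}) \le \overline{D}_X(f_{j,\infty})$ and $\underline{D}_X(f_{i,\infty}) \le \underline{D}_X(f_{j,\infty})$.

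If instead one wants a self-contained argument, one can simply rerun the proof of the theorem in the case $K = X$, where it is even shorter: the step of commuting $f_i^{-1}$ with the restriction $(\cdot)|_K$ no longer appears, and one only needs the factorization $\mathcal{U}_i^n = \mathcal{U} \vee f_i^{-1}(\mathcal{U}_{i+1}^{n-1})$ together with \Cref{L:useful}(2) and (4) to obtain $\mathcal{N}(\mathcal{U}_i^n) \le \mathcal{N}(\mathcal{U}) \cdot \mathcal{N}(\mathcal{U}_{i+1}^{n-1})$. Dividing by $n^s$, taking upper limits (resp.\ lower limits) as $n \to \infty$, and passing to the critical exponent $s$ exactly as in the theorem gives $\overline{h}_X(f_{i,\infty}, s) \le \overline{h}_X(f_{i+1,\infty}, s)$, hence $\overline{D}_X(f_{i,\infty}) \le \overline{D}_X(f_{i+1,\infty})$; iterating this bound from $i$ up to $j$ completes the proof, and the lower versions are identical.

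I do not anticipate any genuine obstacle here: the statement is a formal corollary. The one point worth a moment's care is confirming that the invariance hypothesis of the theorem is not a real restriction when $K = X$ — which it is not, for the elementary reason noted above. In particular, the remark following the theorem (where the invariance is needed because $\mathcal{N}(f_i^{-1}(\mathcal{U})|_K) \le \mathcal{N}(\mathcal{U}|_K)$ can fail) is precisely the issue that evaporates upon setting $K = X$.
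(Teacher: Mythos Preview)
Your proposal is correct and matches the paper's approach: the paper states this as a corollary with no proof, so it is understood to follow immediately from the preceding theorem by taking $K=X$, exactly as you argue. Your observation that $f_i^{-1}(X)=X$ makes the invariance hypothesis automatic is precisely the point.
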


\begin{theorem}\label{T:hk}
Let $(X,f_{1,\infty})$ be an NDS, and $K$ be a nonempty subset of $X$. Then for any $k\in\mathbb{N}$, 
\[
\overline{D}_K(f_{1,\infty}^k)\leq \overline{D}_K(f_{1,\infty}),\quad \underline{D}_K(f_{1,\infty}^k)\leq \underline{D}_K(f_{1,\infty}).
\]
\end{theorem}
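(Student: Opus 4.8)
The plan is to compare the covers generated by the $k$-th power system $f_{1,\infty}^k$ with those generated by the original system $f_{1,\infty}$, and then to transfer the resulting inequality between $s$-entropies to an inequality between the critical exponents. The key observation is that for any finite open cover $\mathcal{U}$ of $X$, the $n$-th join along $f_{1,\infty}^k$,
\[
(\mathcal{U})_1^{n}\text{ formed with }f_{1,\infty}^k=\bigvee_{j=0}^{n-1}(f_1^k)_{jk+1}^{-j}(\mathcal{U})\big|_K,
\]
is refined by the corresponding join of length $nk$ along $f_{1,\infty}$; more precisely, using \Cref{L:useful}(1),(5) together with $f_1^{jk}=f_{(j-1)k+1}^k\circ\dots\circ f_1^k$, each member $\bigcap_{j=0}^{n-1}(f_1^{jk})^{-1}(U_j)$ appearing in the $f_{1,\infty}^k$-join is one of the members of $\bigvee_{i=0}^{nk-1}f_1^{-i}(\mathcal{U})=\mathcal{U}_1^{nk}$, since the indices $0,k,2k,\dots,(n-1)k$ form a subset of $\{0,1,\dots,nk-1\}$. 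Hence $\mathcal{U}_1^{nk}$ (built along $f_{1,\infty}$) is a refinement of the $n$-th $f_{1,\infty}^k$-join of $\mathcal{U}$, and by \Cref{onK}(4),
\[
\mathcal{N}\!\left(\textstyle\bigvee_{j=0}^{n-1}(f_1^{jk})^{-1}(\mathcal{U})\big|_K\right)\le \mathcal{N}\!\left(\mathcal{U}_1^{nk}\big|_K\right).
\]

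First I would fix $s>0$ and an open cover $\mathcal{U}$, and apply this cardinality bound inside the definition of the $s$-entropy. Dividing by $n^s$ and taking $\limsup$ (resp. $\liminf$) along $n\to\infty$ gives, after rewriting $\frac{1}{n^s}=k^s\cdot\frac{1}{(nk)^s}$,
\[
\overline{h}_K(f_{1,\infty}^k,s,\mathcal{U})\le k^s\,\mathop{\lim\sup}_{n\to\infty}\frac{1}{(nk)^s}\log\mathcal{N}\!\left(\mathcal{U}_1^{nk}\big|_K\right)\le k^s\,\overline{h}_K(f_{1,\infty},s,\mathcal{U}),
\]
and the same with $\liminf$; here I use that the subsequence $(nk)_{n\ge1}$ has $\limsup$ bounded by the full $\limsup$. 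Taking the supremum over all finite open covers $\mathcal{U}$ of $X$ yields
\[
\overline{h}_K(f_{1,\infty}^k,s)\le k^s\,\overline{h}_K(f_{1,\infty},s),\qquad \underline{h}_K(f_{1,\infty}^k,s)\le k^s\,\underline{h}_K(f_{1,\infty},s).
\]

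Next I would pass to the entropy dimensions. Since $k^s\in(0,\infty)$ for every $s>0$, the factor $k^s$ does not affect whether an $s$-entropy equals $0$ or $+\infty$: if $\overline{h}_K(f_{1,\infty},s)=0$ then $\overline{h}_K(f_{1,\infty}^k,s)=0$ as well. Therefore
\[
\{s>0:\overline{h}_K(f_{1,\infty},s)=0\}\subseteq\{s>0:\overline{h}_K(f_{1,\infty}^k,s)=0\},
\]
and taking infima (which is how $\overline{D}_K$ is defined) reverses the inclusion to give $\overline{D}_K(f_{1,\infty}^k)\le\overline{D}_K(f_{1,\infty})$. The argument for $\underline{D}_K$ is identical, using the $\liminf$ version of the inequality. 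I expect the only genuine point requiring care to be the combinatorial identification of the $f_{1,\infty}^k$-join of $\mathcal{U}$ as a subjoin of $\mathcal{U}_1^{nk}$ — one must track the composition convention $f_i^j=f_{i+(j-1)}\circ\dots\circ f_i$ carefully so that $(f_1^k)_{jk+1}^{-j}$ really equals $(f_1^{jk})^{-1}$ — while the passage from $s$-entropies to dimensions is a formal consequence of the scaling factor $k^s$ being finite and nonzero. No compactness or invariance hypothesis on $K$ is needed, since only \Cref{onK}(4) and monotonicity of $\limsup/\liminf$ under passing to subsequences are used.
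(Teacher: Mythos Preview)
Your argument is correct and follows the same underlying idea as the paper --- bounding the $n$-step complexity of $f_{1,\infty}^k$ by the $nk$-step complexity of $f_{1,\infty}$ and then reading off the dimension inequality from the scaling $k^s$ --- but you implement it differently. The paper works through spanning sets: it observes that any $(kn,\varepsilon)$-spanning set of $K$ for $f_{1,\infty}$ is an $(n,\varepsilon)$-spanning set for $f_{1,\infty}^k$, hence $r_n(f_{1,\infty}^k,K,\varepsilon)\le r_{nk}(f_{1,\infty},K,\varepsilon)$, and then invokes \Cref{l:hd}. You instead stay with open covers and use that $\bigvee_{j=0}^{n-1}f_1^{-jk}(\mathcal{U})\preceq \mathcal{U}_1^{nk}$ directly. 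Your route has the small advantage of not needing the metric structure (the theorem is stated for a compact topological space, while the paper's proof tacitly uses $(X,d)$), and it avoids the detour through \Cref{l:hd}.

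One point to tighten: for the $\underline{h}$ inequality you write ``the same with $\liminf$'', but passing to the subsequence $(nk)_n$ only gives $\liminf_n \tfrac{1}{(nk)^s}\log\mathcal{N}(\mathcal{U}_1^{nk}|_K)\ge \underline{h}_K(f_{1,\infty},s,\mathcal{U})$, which points the wrong way. The fix is to use that $m\mapsto \mathcal{N}(\mathcal{U}_1^{m}|_K)$ is non-decreasing: for $nk\le m<(n+1)k$ one gets $\tfrac{1}{m^s}\log\mathcal{N}(\mathcal{U}_1^{m}|_K)\ge (\tfrac{n}{n+1})^s\cdot\tfrac{1}{(nk)^s}\log\mathcal{N}(\mathcal{U}_1^{nk}|_K)$, and since $(\tfrac{n}{n+1})^s\to 1$ the full $\liminf$ and the subsequence $\liminf$ coincide. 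The paper's ``Similarly'' hides the same step.
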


\begin{proof}
To prove this theorem, we need only to show that, for any $s>0$ and $k\in \mathbb{N}$, then 
\[
\overline{h}_K(f_{1,\infty}^k,s) \leq k^s\cdot \overline{h}_K(f_{1,\infty},s),\quad \underline{h}_K(f_{1,\infty}^k,s)\leq k^s\cdot \underline{h}_K(f_{1,\infty},s).
\]

For this purpose, let $g_j=f_{(j-1)k+1}^k$ for $j\geq 1$. Then $f_{1,\infty}^k=\{f_{(j-1)k+1}^k\}_{j=1}^{\infty}=\{g_j\}_{j=1}^{\infty}$,
\begin{equation}\label{g1j}
f_1^{jk}=f_{(j-1)k+1}^{k}\circ \dots\circ f_{k+1}^k\circ f_1^k=g_j\circ\dots\circ g_1=g_1^j.
\end{equation}
From this, it is easy to verify that if $E$ is an $(kn,\varepsilon)$-spanning set of $K$ with respect to $f_{1,\infty}$, then $E$ is also an $(n,\varepsilon)$-spanning set of $K$ with respect to $f_{1,\infty}^k$. Hence $r_n(f_{1,\infty}^k,K,\varepsilon)\leq r_{nk}(f_{1,\infty},K,\varepsilon)$. By ~\Cref{hdsep} and ~\Cref{l:hd},
\begin{align*}
\overline{h}_K(f_{1,\infty}^k,s)&= \lim_{\varepsilon\to 0}\mathop{\lim\sup}_{n\to\infty}\frac{1}{n^s}\log r_n(f_{1,\infty}^k,K,\varepsilon)\\
&\leq \lim_{\varepsilon\to 0}\mathop{\lim\sup}_{n\to\infty} \frac{k^s}{(nk)^s}\log r_{nk}(f_{1,\infty},K,\varepsilon)\\
&=k^s \cdot\overline{h}_K(f_{1,\infty},s).
\end{align*}

Similarly, we could prove $\underline{h}_K(f_{1,\infty}^k,s) \leq k^s\cdot \underline{h}_K(f_{1,\infty},s)$.

The rest proof is clear since if $s_0>\overline{D}_K(f_{1,\infty})$, then $\overline{h}_K(f_{1,\infty},s_0)=0$ implies $\overline{h}_K(f_{1,\infty}^k,s_0)=0$, and it further implies that $s_0>\overline{D}_K(f_{1,\infty}^k)$. Thus $\overline{D}_K(f_{1,\infty})\geq \overline{D}_K(f_{1,\infty}^k)$ by the arbitrariness of $s_0$. Similarly, we have $\underline{D}_K(f_{1,\infty}^k)\leq \underline{D}_K(f_{1,\infty})$. 
\end{proof}

\begin{remark}
Kuang et al. ~\cite[Proposition 3.2]{KCL2013} proved the upper topological entropy dimension inequality when $K=X$. And Li et al. ~\cite[Theorem 3.7]{LZW2019} proved a similar result when the sequence $f_{1,\infty}$ is equicontinuous (see ~\cref{D:equi2}). Particularly when $f_i=f$ for all $i\geq 1$ then we further have $\underline{D}_K(f_{1,\infty}^k)= \underline{D}_K(f_{1,\infty})$, see ~\cite[Theorem 3.2]{KCML2014}.
\end{remark}

\begin{theorem}
Let $(X,f_{1,\infty})$ be an NDS, and $K$ be a nonempty subset of $X$. If the sequence $f_{1,\infty}$ has period $k$, then 
\[\overline{D}_K(f_{1,\infty}^k)= \overline{D}_K(f_{1,\infty})\geq \underline{D}_K(f_{1,\infty}^k)= \underline{D}_K(f_{1,\infty}).
\]
\end{theorem}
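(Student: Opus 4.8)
The plan is to reduce everything to an identity between the $s$-entropies of $f_{1,\infty}$ and of its $k$-th power system. First note that, once the two outer equalities are proved, the middle inequality is automatic: $\liminf\le\limsup$ gives $\underline{h}_K(f_{1,\infty}^k,s)\le\overline{h}_K(f_{1,\infty}^k,s)$ for every $s>0$, whence $\underline{D}_K(f_{1,\infty}^k)\le\overline{D}_K(f_{1,\infty}^k)=\overline{D}_K(f_{1,\infty})$. So it suffices to establish $\overline{D}_K(f_{1,\infty}^k)=\overline{D}_K(f_{1,\infty})$ and $\underline{D}_K(f_{1,\infty}^k)=\underline{D}_K(f_{1,\infty})$, and for this it is enough to show that for every $s>0$
\[
\overline{h}_K(f_{1,\infty}^k,s)=k^s\,\overline{h}_K(f_{1,\infty},s)\quad\text{and}\quad\underline{h}_K(f_{1,\infty}^k,s)=k^s\,\underline{h}_K(f_{1,\infty},s),
\]
since $0<k^s<+\infty$, so the left-hand sides vanish (resp. are $+\infty$) exactly when the corresponding $s$-entropies of $f_{1,\infty}$ do, whence the critical exponents defining the two entropy dimensions coincide.

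To get these identities I would argue with spanning sets through \Cref{hdsep} and \Cref{l:hd}. The proof of \Cref{T:hk} already gives $r_m(f_{1,\infty}^k,K,\varepsilon)\le r_{mk}(f_{1,\infty},K,\varepsilon)$, so only a matching reverse bound is missing. Put $g_j=f_{(j-1)k+1}^k$, so that $f_{1,\infty}^k=\{g_j\}_{j\ge1}$ and $g_1^{\,j}=f_1^{jk}$ by \eqref{g1j}. Since $f_{1,\infty}$ has period $k$, i.e. $f_{i+k}=f_i$ for all $i\ge1$, the $i$ maps applied right after time $jk$ are precisely $f_1,\dots,f_i$, yielding the identity $f_1^{jk+i}=f_1^{\,i}\circ f_1^{jk}=f_1^{\,i}\circ g_1^{\,j}$ for all $j\ge0$, $0\le i\le k-1$. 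As $\{f_1^{\,i}:0\le i\le k-1\}$ is a finite family of continuous maps on the compact space $X$, for each $\varepsilon>0$ there is $\delta=\delta(\varepsilon)\in(0,\varepsilon]$ such that $d(u,v)\le\delta$ implies $d(f_1^{\,i}(u),f_1^{\,i}(v))\le\varepsilon$ for all $0\le i\le k-1$. I would then check that any $(m,\delta)$-spanning set $E$ of $K$ with respect to $f_{1,\infty}^k$ is an $(mk,\varepsilon)$-spanning set of $K$ with respect to $f_{1,\infty}$: given $y\in K$, take $x\in E$ with $d(g_1^{\,j}(x),g_1^{\,j}(y))\le\delta$ for $0\le j\le m-1$; for $0\le\ell\le mk-1$ write $\ell=jk+i$ with $0\le j\le m-1$, $0\le i\le k-1$, and the identity above gives $d(f_1^{\ell}(x),f_1^{\ell}(y))=d(f_1^{\,i}(g_1^{\,j}(x)),f_1^{\,i}(g_1^{\,j}(y)))\le\varepsilon$. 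Hence $r_{mk}(f_{1,\infty},K,\varepsilon)\le r_m(f_{1,\infty}^k,K,\delta(\varepsilon))$, so that for all $m$
\[
r_m(f_{1,\infty}^k,K,\varepsilon)\le r_{mk}(f_{1,\infty},K,\varepsilon)\le r_m(f_{1,\infty}^k,K,\delta(\varepsilon)).
\]

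Finally I would pass to growth rates. Since $r_n(f_{1,\infty},K,\varepsilon)$ is non-decreasing in $n$ and $((m+1)/m)^s\to1$, the $\limsup$ (resp. $\liminf$) over all $n$ of $\frac1{n^s}\log r_n(f_{1,\infty},K,\varepsilon)$ equals the corresponding limit along the subsequence $n=mk$; multiplying by $k^s$ and using $\frac{k^s}{(mk)^s}=\frac1{m^s}$, the displayed sandwich squeezes $k^s\limsup_{n\to\infty}\frac1{n^s}\log r_n(f_{1,\infty},K,\varepsilon)$ between $\limsup_{m\to\infty}\frac1{m^s}\log r_m(f_{1,\infty}^k,K,\varepsilon)$ and $\limsup_{m\to\infty}\frac1{m^s}\log r_m(f_{1,\infty}^k,K,\delta(\varepsilon))$. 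Letting $\varepsilon\to0$ (so $\delta(\varepsilon)\to0$) sends both ends to $\overline{h}_{span}(f_{1,\infty}^k,K,s)$, and \Cref{l:hd} then gives $k^s\,\overline{h}_K(f_{1,\infty},s)=\overline{h}_K(f_{1,\infty}^k,s)$; repeating with $\liminf$ yields the lower analogue, which closes the argument.

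I expect the main obstacle to be the spanning-set comparison in the second paragraph: deriving the identity $f_1^{jk+i}=f_1^{\,i}\circ g_1^{\,j}$ from periodicity and using uniform continuity of the \emph{finite} family $\{f_1^{\,i}\}_{i=0}^{k-1}$ to convert a $\delta$-spanning set for the $k$-th power system into an $\varepsilon$-spanning set for the original system over $k$ times as many steps. The monotonicity/subsequence identification of the $\limsup$'s and the limit interchange in the last paragraph are routine.
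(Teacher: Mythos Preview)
Your proof is correct and takes a genuinely different route from the paper. The paper establishes the reverse inequality $\overline{h}_K(f_{1,\infty}^k,s)\ge k^s\,\overline{h}_K(f_{1,\infty},s)$ (and its lower analogue) via open covers: it uses periodicity through the inverse identity $f_1^{-(tk+i)}=(f_1^{kt})^{-1}\circ f_1^{-i}$ to decompose $\bigvee_{j=0}^{n-1}f_1^{-j}(\mathcal U)$ as $\big(\bigvee_{j=0}^{t_0-1}(f_1^k)^{-j}(\mathcal U_1^k)\big)\vee(f_1^{t_0k})^{-1}(\mathcal U_1^r)$ and then compares minimal cardinalities restricted to $K$. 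You instead stay entirely within the spanning-set framework already used in \Cref{T:hk}: you exploit periodicity through the forward identity $f_1^{jk+i}=f_1^{\,i}\circ g_1^{\,j}$ and use uniform continuity of the \emph{finite} family $\{f_1^{\,i}\}_{i=0}^{k-1}$ on the compact space to convert $(m,\delta)$-spanning sets for $f_{1,\infty}^k$ into $(mk,\varepsilon)$-spanning sets for $f_{1,\infty}$. Your approach is methodologically more uniform with \Cref{T:hk} and delivers the full identity $\overline{h}_K(f_{1,\infty}^k,s)=k^s\,\overline{h}_K(f_{1,\infty},s)$ in a single sandwich, whereas the paper's cover decomposition is purely combinatorial and avoids the modulus-of-continuity bookkeeping; both ultimately rely on the metric setting since \Cref{T:hk} is invoked for the other direction.
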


\begin{proof}
To prove this theorem, by ~\Cref{T:hk}, we need only to prove that for any $s>0$ and some open cover $\mathcal{U}$ of $X$, 
\[\overline{h}_K(f_{1,\infty}^k,s,\mathcal{U})\geq k^s\cdot\overline{h}_K(f_{1,\infty},s,\mathcal{U})\quad \text{and}\quad \underline{h}_K(f_{1,\infty}^k,s,\mathcal{U})\geq k^s\cdot \underline{h}_K(f_{1,\infty},s,\mathcal{U}).
\]

Note that $f_{1,\infty}=\{f_1,\dots,f_k,f_1,\dots,f_k,\dots\}$ and $f_{1,\infty}^k=\{f_1^k,f_1^k,\dots\}$ . For any $t\in\mathbb{N}$ and $1\leq i\leq k-1$, by ~\eqref{g1j}, then $f_1^{kt}=(f_1^k)^t$ and
\begin{align*}
f_1^{tk+i}&=(f_{tk+i}\circ f_{tk+i-1}\circ \dots \circ f_{tk+1})\circ\left( f_{tk}\circ \dots\circ f_{k+1}\circ (f_k\circ \dots\circ f_1)\right)\\
&=(f_i\circ \dots f_1)\circ f_1^{tk}\\
&=f_1^i\circ f_1^{kt}.
\end{align*}
Thus 
\begin{align}\label{reverse}
f_1^{-(tk+i)}=(f_1^{tk+i})^{-1}=f_1^{-kt}\circ f_1^{-i}=(f_1^{kt})^{-1}\circ f_1^{-i}.
\end{align}

We begin by proving the first inequality. For any $n\in\mathbb{N}$, let $n=t_0k+r$, where $t_0=[n/k]$ is the integer part of $n/k$. Then
\begin{align*}
\bigvee_{j=0}^{n-1}f_1^{-j}(\mathcal{U})&=\big[\mathcal{U}\vee f_1^{-1}(\mathcal{U})\vee\dots\cap f_1^{-(k-1)}(\mathcal{U})\big] \vee\big[ f_1^{-k}(\mathcal{U})\vee\dots\vee f_1^{-(2k-1)}(\mathcal{U})\big] \nonumber \\
	&\qquad \vee\dots \vee \big[f_1^{-(t_0-1)k}(\mathcal{U})\vee f_1^{-((t_0-1)k+1)}(\mathcal{U})\dots\vee f_1^{-(t_0k-1)}(\mathcal{U})\big] \nonumber \\ 
	&\qquad \vee\big[f_1^{-t_0k}(\mathcal{U})\vee f_1^{-(t_0k+1)}(\mathcal{U})\vee\dots\vee f_1^{-(t_0k+r-1)}(\mathcal{U})\big]\nonumber\\ 
	&=\big[\mathcal{U}\vee f_1^{-1}(\mathcal{U})\vee\dots\cap f_1^{-(k-1)}(\mathcal{U})\big] \vee f_1^{-k}\big(\mathcal{U}\vee f_1^{-1}(\mathcal{U})\vee\dots\vee f_1^{-(k-1)}(\mathcal{U})\big)\nonumber \\ 
	&\qquad \vee \dots \vee f_1^{-(t_0-1)k}\big(\mathcal{U}\vee f_1^{-1}(\mathcal{U})\vee\dots\vee f_1^{-(k-1)}(\mathcal{U}) \big)\nonumber \\
	&\qquad \vee f_1^{-t_0k}\big(\mathcal{U}\vee f_1^{-1}(\mathcal{U})\vee\dots\vee f_1^{-(r-1)}(\mathcal{U}) \big)\quad (\text{By} ~\eqref{reverse})\nonumber \\
	&=\big(\mathcal{U}_1^k\vee (f_1^k)^{-1}(\mathcal{U}_1^k)\vee\dots\vee (f_1^{(t_0-1)k})^{-1}(\mathcal{U}_1^k)\big)\vee (f_1^{t_0k})^{-1}(\mathcal{U}_1^r)\nonumber \\
	&=\left(\bigvee_{j=0}^{t_0-1}(f_1^k)^{-j}(\mathcal{U}_1^k)\right) \vee (f_1^{t_0k})^{-1}(\mathcal{U}_1^r).
\end{align*}
Then
\begin{align*}
\mathcal{N}(\mathcal{U}_1^n|_K)
&=\mathcal{N}\left(\bigvee_{j=0}^{n-1}f_1^{-j}(\mathcal{U})\Big|_K\right)\\
	&\leq \mathcal{N}\left(\big(\bigvee_{j=0}^{t_0-1}(f_1^k)^{-j}(\mathcal{U}_1^k)\big)\Big|_K\right)\cdot \mathcal{N}\left((f_1^{-t_0k}(\mathcal{U}_1^r))|_K\right)\\
	&\leq \mathcal{N}\left(\big(\bigvee_{j=0}^{t_0-1}(f_1^k)^{-j}(\mathcal{U}_1^k)\big)\Big|_K\right)\cdot \mathcal{N}(f_1^{-t_0k}(\mathcal{U}_1^r)) \quad \text{(By ~\Cref{onK})}\\
	&\leq \mathcal{N}\left(\big(\bigvee_{j=0}^{t_0-1}(f_1^k)^{-j}(\mathcal{U}_1^k)\big)\Big|_K\right)\cdot \mathcal{N}(\mathcal{U}_1^r) \quad \text{(By ~\Cref{L:useful})}.
\end{align*}
This follows that
\begin{equation}\label{logn}
\log \mathcal{N}\left(\big(\bigvee_{j=0}^{t_0-1}(f_1^k)^{-j}(\mathcal{U}_1^k)\big)\big|_K\right)\geq \log \mathcal{N}(\mathcal{U}_1^n|_K) -\log \mathcal{N}(\mathcal{U}_1^r).
\end{equation}
Then
\begin{align*}
\overline{h}_K(f_{1,\infty}^k,s,\mathcal{U}_1^k)&=\mathop{\lim\sup}_{t_0\to\infty}\frac{1}{t_0^s}\log\mathcal{N}\left(\big(\bigvee_{j=0}^{t_0-1}(f_1^k)^{-j}(\mathcal{U}_1^k)\big)\Big|_K \right)\\
	&\geq \mathop{\lim\sup}_{t_0\to\infty}\frac{1}{t_0^s}\left[ \log \mathcal{N}(\mathcal{U}_1^{n}|_K)-\log \mathcal{N}(\mathcal{U}_1^r) \right] \quad (\text{By}~\eqref{logn})\\
	&= \mathop{\lim\sup}_{t_0\to\infty}\frac{1}{t_0^s}\log \mathcal{N}(\mathcal{U}_1^n|_K)\\
	&=\mathop{\lim\sup}_{t_0\to\infty}\frac{1}{n^s}\frac{(t_0k+r)^s}{t_0^s}\log \mathcal{N}(\mathcal{U}_1^n|_K)\\
	&= k^s\cdot\mathop{\lim\sup}_{n\to\infty}\frac{1}{n^s}\log \mathcal{N}(\mathcal{U}_1^n|_K)\\
	&=k^s\cdot\overline{h}_K(f_{1,\infty},s,\mathcal{U}).
\end{align*}
Therefore, $\overline{h}_K(f_{1,\infty}^k,s)\geq \overline{h}_K(f_{1,\infty}^k,s,\mathcal{U}_1^k)\geq k^s\cdot\overline{h}_K(f_{1,\infty},s,\mathcal{U})$. By the arbitrariness of $\mathcal{U}$, it follows that $\overline{h}_K(f_{1,\infty}^k,s)\geq k^s\cdot\overline{h}_K(f_{1,\infty},s)$. Similarly we could prove $\underline{h}_K(f_{1,\infty}^k,s,\mathcal{U})\geq k^s\cdot \underline{h}_K(f_{1,\infty},s,\mathcal{U})$. 

Finally, for any $s_0>\overline{D}_K(f_{1,\infty}^k)$, then $\overline{h}_K(f_{1,\infty}^k,s_0)=0$ implies $\overline{h}_K(f_{1,\infty},s_0)=0$. Thus $s_0\geq \overline{D}_K(f_{1,\infty})$ and it follows that $\overline{D}_K(f_{1,\infty}^k)\geq \overline{D}_K(f_{1,\infty})$. Then by \Cref{T:hk}, we have $\overline{D}_K(f_{1,\infty}^k)= \overline{D}_K(f_{1,\infty})$, and similarly $\underline{D}_K(f_{1,\infty}^k)= \underline{D}_K(f_{1,\infty})$.
\end{proof}

\begin{definition}\label{D:equi2}
Let $(X,d)$ be a compact metric space. We say the sequence of selfmaps $f_{1,\infty}=\{f_i\}_{i=1}^{\infty}$ is \emph{equicontinuous}, if for any $\varepsilon>0$, there exists $\delta=\delta(\varepsilon)>0$ such that $d(f_i(x),f_i(y))<\varepsilon$ for all $i\geq 1$ whenever $d(x,y)<\delta$.
\end{definition}

\begin{lemma} ~\cite[Proposition 3.5]{KCL2013} \label{KCLP}
Let $(X,d)$ be a compact metric space, and $f_{1,\infty}$ be a sequence of equicontinuous selfmaps of $X$. Then for any $k\geq 1$, 
\[
\overline{D}_X(f_{1,\infty}^k)= \overline{D}_X(f_{1,\infty}).
\]
\end{lemma}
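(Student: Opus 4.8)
The plan is to deduce the equality from the inequality already available in \Cref{T:hk}. Applying that theorem with $K=X$ gives $\overline{D}_X(f_{1,\infty}^k)\le\overline{D}_X(f_{1,\infty})$, so only the reverse inequality needs proof, and for that it suffices to show that for every $s>0$,
\[
k^{s}\,\overline{h}_X(f_{1,\infty},s)\le\overline{h}_X(f_{1,\infty}^k,s).
\]
Granting this, if $s_0>\overline{D}_X(f_{1,\infty}^k)$ then $\overline{h}_X(f_{1,\infty}^k,s_0)=0$, hence $\overline{h}_X(f_{1,\infty},s_0)=0$, so $s_0\ge\overline{D}_X(f_{1,\infty})$; letting $s_0\downarrow\overline{D}_X(f_{1,\infty}^k)$ yields $\overline{D}_X(f_{1,\infty}^k)\ge\overline{D}_X(f_{1,\infty})$. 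Throughout I would work with spanning sets and translate back via \Cref{hdsep} and \Cref{l:hd}; note that it is precisely equicontinuity that makes this direction run, and together with \Cref{T:hk} it in fact gives $\overline{h}_X(f_{1,\infty}^k,s)=k^s\overline{h}_X(f_{1,\infty},s)$.

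The key input is to turn \Cref{D:equi2} into a modulus that controls short compositions uniformly along the whole sequence: given $\varepsilon>0$, iterating the equicontinuity modulus at most $k-1$ times produces $\eta=\eta(\varepsilon,k)>0$, with $\eta\to0$ as $\varepsilon\to0$, such that $d(x,y)\le\eta$ implies $d\big(f_i^{l}(x),f_i^{l}(y)\big)<\varepsilon$ for all $i\ge1$ and all $0\le l\le k-1$. By \eqref{g1j} the time-$n$ map of $f_{1,\infty}^k$ is $g_1^n=f_1^{kn}$; so if $E$ is an $(n,\eta)$-spanning set of $X$ for $f_{1,\infty}^k$ and $x\in X$, we may pick $z\in E$ with $d(f_1^{kj}(z),f_1^{kj}(x))\le\eta$ for $0\le j\le n-1$, and then writing $m=kj+l$ with $0\le l\le k-1$ and using $f_1^{m}=f_{kj+1}^{l}\circ f_1^{kj}$ we obtain $d(f_1^{m}(z),f_1^{m}(x))<\varepsilon$ for all $0\le m\le kn-1$. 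Hence $E$ is a $(kn,\varepsilon)$-spanning set of $X$ for $f_{1,\infty}$, so $r_{kn}(f_{1,\infty},X,\varepsilon)\le r_{n}(f_{1,\infty}^k,X,\eta)$.

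To pass to growth rates, recall that $m\mapsto r_{m}(f_{1,\infty},X,\varepsilon)$ is non-decreasing (a finer Bowen metric can only require more balls). Thus for $kn\le m<k(n+1)$ we have $r_{m}(f_{1,\infty},X,\varepsilon)\le r_{k(n+1)}(f_{1,\infty},X,\varepsilon)\le r_{n+1}(f_{1,\infty}^k,X,\eta)$, so
\[
\frac{1}{m^{s}}\log r_{m}(f_{1,\infty},X,\varepsilon)\le\frac{(n+1)^{s}}{(kn)^{s}}\cdot\frac{1}{(n+1)^{s}}\log r_{n+1}(f_{1,\infty}^k,X,\eta).
\]
Taking $\limsup_{m\to\infty}$ and using $(n+1)^{s}/(kn)^{s}\to k^{-s}$ gives $\limsup_{m\to\infty}\frac{1}{m^{s}}\log r_{m}(f_{1,\infty},X,\varepsilon)\le k^{-s}\limsup_{n\to\infty}\frac{1}{n^{s}}\log r_{n}(f_{1,\infty}^k,X,\eta)$; letting $\varepsilon\to0$ (hence $\eta\to0$) and invoking \Cref{hdsep} and \Cref{l:hd} yields $\overline{h}_X(f_{1,\infty},s)\le k^{-s}\overline{h}_X(f_{1,\infty}^k,s)$, which is the inequality sought.

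I expect the main obstacle to be the first step: extracting one $\eta$ that simultaneously dominates every partial composition $f_{kj+1}^{l}$, $0\le l\le k-1$, uniformly in $j$ — precisely the point at which equicontinuity of $f_{1,\infty}$, rather than continuity of each $f_i$ alone, is essential, and which explains why the equality can fail for general $f_{1,\infty}$. The remaining steps are routine, the only care needed being to run the final $\limsup$ over all $m$, not merely over the multiples of $k$.
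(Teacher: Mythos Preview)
Your proposal is correct and follows essentially the same route as the paper (which cites \cite{KCL2013} for the lemma and sketches the argument in the proof of the subsequent theorem): use equicontinuity to build a uniform modulus $\eta(\varepsilon)\to0$ controlling all partial compositions $f_i^l$, $0\le l\le k-1$, then compare the dynamical complexity of $f_{1,\infty}$ at scale $kn$ with that of $f_{1,\infty}^k$ at scale $n$, and finish via \Cref{hdsep} and \Cref{l:hd}. The only cosmetic difference is that the paper's sketch phrases the comparison with separated sets ($s_n(f_{1,\infty}^k,K,\varepsilon)\ge s_{nk}(f_{1,\infty},K,\delta(\varepsilon))$) while you use spanning sets; these are equivalent by \Cref{hdsep}, and your extra care in running the $\limsup$ over all $m$ rather than just multiples of $k$ is a nice touch.
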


\begin{theorem}
Let $(X,d)$ be a compact metric space, $(X,f_{1,\infty})$ be an NDS, and $K$ be a nonempty subset of $X$. If the sequence $f_{1,\infty}$ is equicontinuous, then for any $k\geq 1$,
\[\overline{D}_K(f_{1,\infty}^k)= \overline{D}_K(f_{1,\infty})\geq \underline{D}_K(f_{1,\infty}^k)= \underline{D}_K(f_{1,\infty}).
\]{}
\end{theorem}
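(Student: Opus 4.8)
The plan is to show that, for every $s>0$, equicontinuity upgrades the inequalities
\[
\overline h_K(f_{1,\infty}^k,s)\le k^s\,\overline h_K(f_{1,\infty},s),\qquad \underline h_K(f_{1,\infty}^k,s)\le k^s\,\underline h_K(f_{1,\infty},s)
\]
established inside the proof of \Cref{T:hk} to equalities. Once
\[
\overline h_K(f_{1,\infty}^k,s)=k^s\,\overline h_K(f_{1,\infty},s),\qquad \underline h_K(f_{1,\infty}^k,s)=k^s\,\underline h_K(f_{1,\infty},s)
\]
is known for all $s>0$, one of the two $s$-entropies in each line vanishes precisely when the other does, so the infima defining $\overline D_K$ and $\underline D_K$ coincide for $f_{1,\infty}$ and $f_{1,\infty}^k$, which gives the two equalities in the statement. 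The middle inequality needs no equicontinuity at all: $\underline D_K(f_{1,\infty}^k)\le\overline D_K(f_{1,\infty}^k)\le\overline D_K(f_{1,\infty})$, the last step by \Cref{T:hk}. Thus it suffices to prove the reverse inequalities $\overline h_K(f_{1,\infty},s)\le k^{-s}\overline h_K(f_{1,\infty}^k,s)$ and $\underline h_K(f_{1,\infty},s)\le k^{-s}\underline h_K(f_{1,\infty}^k,s)$, which by \Cref{hdsep} and \Cref{l:hd} I would carry out with the spanning numbers $r_n(\cdot\,,K,\varepsilon)$.

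The heart of the argument is the comparison
\[
r_{nk}(f_{1,\infty},K,\varepsilon)\le r_n(f_{1,\infty}^k,K,\delta),
\]
valid for a suitable $\delta=\delta(\varepsilon,k)>0$ with $\delta(\varepsilon,k)\to0$ as $\varepsilon\to0$. The modulus $\delta$ is produced by iterating \Cref{D:equi2}: equicontinuity furnishes a modulus $\delta_1(\cdot)$, which may be taken no larger than its argument, and composing it with itself $k-1$ times yields $\delta:=\delta_{k-1}(\varepsilon)$ such that $d(u,v)\le\delta$ implies $d(f_a^i(u),f_a^i(v))<\varepsilon$ for every $a\ge1$ and every $0\le i\le k-1$. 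Now let $E$ be an $(n,\delta)$-spanning set of $K$ for $f_{1,\infty}^k$. Since $f_1^{jk}=g_1^j$ where $g_j=f_{(j-1)k+1}^k$, by \eqref{g1j}, for each $y\in K$ there is $x\in E$ with $d(f_1^{jk}(x),f_1^{jk}(y))\le\delta$ for $0\le j\le n-1$. Writing an arbitrary $\ell$ with $0\le\ell\le nk-1$ as $\ell=jk+i$, $0\le j\le n-1$, $0\le i\le k-1$, and using $f_1^{\ell}=f_{jk+1}^i\circ f_1^{jk}$, the choice of $\delta$ gives $d(f_1^{\ell}(x),f_1^{\ell}(y))<\varepsilon$; hence $d_{nk}(x,y)\le\varepsilon$ and $E$ is an $(nk,\varepsilon)$-spanning set of $K$ for $f_{1,\infty}$. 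This is exactly the displayed comparison.

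To pass from this comparison to the entropy inequalities, note that $d_m\le d_{m+1}$, so $m\mapsto r_m(f_{1,\infty},K,\varepsilon)$ is non-decreasing; hence for every $m$, setting $n=\lceil m/k\rceil$ we get $r_m(f_{1,\infty},K,\varepsilon)\le r_{nk}(f_{1,\infty},K,\varepsilon)\le r_n(f_{1,\infty}^k,K,\delta)$. Writing $\frac{1}{m^s}=\frac{n^s}{m^s}\cdot\frac{1}{n^s}$ and using $\lceil m/k\rceil^s/m^s\to k^{-s}$, together with the fact that $m\mapsto\lceil m/k\rceil$ is non-decreasing and onto $\mathbb{N}$, one obtains
\[
\mathop{\lim\sup}_{m\to\infty}\frac{1}{m^s}\log r_m(f_{1,\infty},K,\varepsilon)\le\frac{1}{k^s}\mathop{\lim\sup}_{n\to\infty}\frac{1}{n^s}\log r_n(f_{1,\infty}^k,K,\delta),
\]
and the same statement with $\liminf$ in place of $\limsup$. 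Letting $\varepsilon\to0$, so that $\delta\to0$, and invoking \Cref{hdsep} and \Cref{l:hd} gives the two reverse inequalities, and hence the theorem.

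The main obstacle is the uniformity required in the equicontinuity step: the modulus $\delta$ must be chosen independently of both the phase $i\in\{0,\dots,k-1\}$ and the starting index $a$ of the partial block $f_a^i$, and one must keep $\delta(\varepsilon,k)\to0$ as $\varepsilon\to0$ so that the $\varepsilon\to0$ and $\delta\to0$ limits can be synchronised. A minor extra care is needed in the $\liminf$ version of the rate estimate, where the monotonicity and surjectivity of $m\mapsto\lceil m/k\rceil$ are precisely what keep the passage to the subsequence $\{nk\}$ from altering the lower limit. Finally, \Cref{KCLP} only addresses $K=X$, so the spanning-set comparison above is what makes the argument work for an arbitrary subset $K$.
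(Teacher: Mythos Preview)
Your proposal is correct and follows essentially the same route as the paper: both arguments use equicontinuity to produce a uniform modulus over the partial blocks $f_a^i$, $0\le i\le k-1$, and then compare the orbit-complexity counts along $f_{1,\infty}$ and $f_{1,\infty}^k$. The paper phrases the key comparison with separated sets, obtaining $s_n(f_{1,\infty}^k,K,\varepsilon)\ge s_{nk}(f_{1,\infty},K,\delta(\varepsilon))$, whereas you use the dual spanning-set inequality $r_{nk}(f_{1,\infty},K,\varepsilon)\le r_n(f_{1,\infty}^k,K,\delta)$; by \Cref{hdsep} these are interchangeable, and your more explicit treatment of the $\limsup$/$\liminf$ step via the monotonicity of $m\mapsto r_m$ and the surjectivity of $m\mapsto\lceil m/k\rceil$ simply fills in details the paper leaves to the reader.
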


\begin{proof}
This proof is essentially similar to the proof of ~\Cref{KCLP}, so we will just sketch the general idea. Since the sequence $f_{1,\infty}$ is equicontinuous, then for any $\varepsilon>0$, we can construct an $\delta(\varepsilon)>0$ such that $\delta(\varepsilon)\to 0$ as $\varepsilon\to 0$ and $d(f_i^j(x),f_i^j(y))\leq \delta(\varepsilon)$ whenever $i\geq 1$, $j=1,2,\dots,k-1$ and $d(x,y)\leq \varepsilon$ (see ~\cite{KS1996} for details). Therefore, if $F$ is an $(nk,\delta(\varepsilon))$-separated set for $K$ with respect to $f_{1,\infty}$, then $F$ is also an $(n,\varepsilon)$-separated set for $K$ with respect to $f_{1,\infty}^k$. Thus $s_n(f_{1,\infty}^k,K,\varepsilon)\geq s_{nk}(f_{1,\infty},K,\delta(\varepsilon))$. This rest of proof follows directly from ~\Cref{hdsep} and ~\Cref{l:hd}, so we omit it.
\end{proof}

\subsection{Properties of the Pesin entropy dimension}

\begin{proposition}\label{p:properties}
Let $(X,f_{1,\infty})$ be an NDS, and $\mathcal{U},\mathcal{U}_i,\mathcal{V}$ be open covers of $X$. Then for any $\alpha\in \mathbb{R}$, $s>0$ and nonempty subsets $K,K_i\subseteq X$. 
\begin{enumerate}
\item If $\mathcal{U}\preceq \mathcal{V}$, then 
\begin{align*}
m(f_{1,\infty},K,s,\mathcal{U},\alpha)&\leq m(f_{1,\infty},K,s,\mathcal{V},\alpha),\\
D(f_{1,\infty},K,s,\mathcal{U})&\leq D(f_{1,\infty},K,s,\mathcal{V});
\end{align*}

\item $\max_{i=1}^m D(f_{1,\infty},K,s,\mathcal{U}_i)\leq D(f_{1,\infty},K,s,\bigvee_{i=1}^m\mathcal{U}_i);$

\item if $K\subseteq Z\subseteq X$, then $D(f_{1,\infty},K,s)\leq D(f_{1,\infty},Z,s)$ and $D(f_{1,\infty},K)\leq D(f_{1,\infty},Z);$

\item 
\begin{align*}
m(f_{1,\infty},\bigcup_{i\geq 1}K_i,s,\mathcal{U},\alpha)&\leq \sum_{i\geq 1} m(f_{1,\infty},K_i,s,\mathcal{U},\alpha),\\
D(f_{1,\infty},\bigcup_{i\geq 1}K_i,s)&= \sup_{i\geq 1}D(f_{1,\infty},K_i,s),\\
D(f_{1,\infty},\bigcup_{i\geq 1}K_i)&= \sup_{i\geq 1}D(f_{1,\infty},K_i).
\end{align*}
\end{enumerate}
\end{proposition}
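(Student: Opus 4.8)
The plan is to treat the four items in order; in each case I would exploit that a collection of strings realising a small value of $M$ for one piece of data can be transformed into one realising a value no larger for the other, and then let the Carath\'eodory--Pesin formalism transport everything from $M$ to $m$ to the critical exponents $D$.

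For (1), using $\mathcal{U}\preceq\mathcal{V}$ I would fix a choice function $\varphi\colon\mathcal{V}\to\mathcal{U}$ with $V\subseteq\varphi(V)$ for all $V\in\mathcal{V}$ and extend it componentwise to strings, $\varphi(\mathbf{V})=(\varphi(V_{i_0}),\dots,\varphi(V_{i_{m-1}}))\in S(\mathcal{U})$; then $m(\varphi(\mathbf{V}))=m(\mathbf{V})$ and $X(\mathbf{V})\subseteq X(\varphi(\mathbf{V}))$. Hence any $\mathcal{G}\subseteq S(\mathcal{V})$ covering $K$ with all lengths at least $N$ is carried by $\varphi$ to a collection in $S(\mathcal{U})$ covering $K$ with the same length constraint and with $\sum e^{-\alpha m(\mathbf{U})^s}$ not increased, which gives $M(f_{1,\infty},K,s,\mathcal{U},\alpha,N)\le M(f_{1,\infty},K,s,\mathcal{V},\alpha,N)$; letting $N\to\infty$ yields the first inequality, and comparing the exponents at which $m$ jumps from $+\infty$ to $0$ yields the second. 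Item (2) is then immediate from (1) and \Cref{L:useful}(1), since $\mathcal{U}_i\preceq\bigvee_{j=1}^m\mathcal{U}_j$ for every $i$.

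For (3), if $K\subseteq Z$ then every collection of strings covering $Z$ also covers $K$, so the infimum defining $M(f_{1,\infty},K,s,\mathcal{U},\alpha,N)$ ranges over a larger family; thus $M(f_{1,\infty},K,\cdot)\le M(f_{1,\infty},Z,\cdot)$, and passing to the limit in $N$ and then taking the supremum over open covers $\mathcal{U}$ gives $D(f_{1,\infty},K,s)\le D(f_{1,\infty},Z,s)$ for all $s>0$. In particular $D(f_{1,\infty},Z,s)=0$ forces $D(f_{1,\infty},K,s)=0$, so $\inf\{s:D(f_{1,\infty},K,s)=0\}\le\inf\{s:D(f_{1,\infty},Z,s)=0\}$, i.e. $D(f_{1,\infty},K)\le D(f_{1,\infty},Z)$.

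For (4) I would first prove the subadditivity of $m$. Assuming $\sum_{i\ge1}m(f_{1,\infty},K_i,s,\mathcal{U},\alpha)<\infty$ (otherwise there is nothing to prove), fix $N$ and $\varepsilon>0$ and choose $\mathcal{G}_i\subseteq S(\mathcal{U})$ covering $K_i$ with all lengths at least $N$ and total weight at most $M(f_{1,\infty},K_i,s,\mathcal{U},\alpha,N)+\varepsilon2^{-i}\le m(f_{1,\infty},K_i,s,\mathcal{U},\alpha)+\varepsilon2^{-i}$; then $\bigcup_{i\ge1}\mathcal{G}_i$ is a countable collection covering $\bigcup_{i\ge1}K_i$ with all lengths at least $N$, so $M(f_{1,\infty},\bigcup_i K_i,s,\mathcal{U},\alpha,N)\le\sum_{i\ge1}m(f_{1,\infty},K_i,s,\mathcal{U},\alpha)+\varepsilon$, and $\varepsilon\to0$ then $N\to\infty$ give the first displayed identity. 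From this, if $\alpha>\sup_i D(f_{1,\infty},K_i,s,\mathcal{U})$ then each summand $m(f_{1,\infty},K_i,s,\mathcal{U},\alpha)$ vanishes, so the countable sum vanishes and hence $m(f_{1,\infty},\bigcup_i K_i,s,\mathcal{U},\alpha)=0$; this gives $D(f_{1,\infty},\bigcup_i K_i,s,\mathcal{U})\le\sup_i D(f_{1,\infty},K_i,s,\mathcal{U})\le\sup_i D(f_{1,\infty},K_i,s)$, and taking the supremum over $\mathcal{U}$, together with the reverse inequality from (3), yields $D(f_{1,\infty},\bigcup_i K_i,s)=\sup_i D(f_{1,\infty},K_i,s)$. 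Applying this at any $s>\sup_i D(f_{1,\infty},K_i)$ makes the right side $0$, whence $D(f_{1,\infty},\bigcup_i K_i)\le s$, and with (3) once more the last identity follows. I expect the only genuinely delicate point to be the interchange of $\lim_{N\to\infty}$ with the infinite sum in the subadditivity step; this is justified because each $M(f_{1,\infty},K_i,s,\mathcal{U},\alpha,N)$ is nondecreasing in $N$, so the monotone convergence theorem applies to $\sum_i M(f_{1,\infty},K_i,s,\mathcal{U},\alpha,N)$.
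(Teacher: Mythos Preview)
Your proposal is correct and follows essentially the same approach as the paper: the choice function in (1), the reduction of (2) to (1) via $\mathcal{U}_i\preceq\bigvee_j\mathcal{U}_j$, the monotonicity-of-infimum argument for (3), and the subadditivity-then-critical-exponent argument in (4) all match the paper's proof. The only difference is that the paper outsources the subadditivity of $m$ to \cite[Proposition~1.1]{Pes1997} whereas you supply the standard $\varepsilon 2^{-i}$ argument explicitly; your concern about interchanging $\lim_{N\to\infty}$ with the infinite sum is in fact moot, since your own argument already bounds $M(\cdot,N)$ by $m(\cdot)$ before summing, so no interchange is needed.
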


\begin{proof}
(1) Let $\mathbf{V}=(V_{i_0},V_{i_1},\dots,V_{i_{m-1}})\in S_m(\mathcal{V})$. If $\mathcal{U}\preceq \mathcal{V}$, then for any $V_{i_j}\in\mathcal{V}$, there exists $U(V_{i_j})\in\mathcal{U}$ such that $V_{i_j}\subseteq U(V_{i_j})$. We associate each $\mathbf{V}$ with $\mathbf{U}(\mathbf{V})=(U(V_{i_0}),U(V_{i_1}),\dots,U(V_{i_{m-1}}))$. If the collection of strings $\mathcal{G}_{N,f_{1,\infty}}$ covers $K$, then the collection of strings $\{\mathbf{U}(\mathbf{V}):\mathbf{V}\in\mathcal{G}_{N,f_{1,\infty}}\}$ also covers $K$. Thus $M(f_{1,\infty},K,s,\mathcal{U},\alpha,N)\leq M(f_{1,\infty},K,s,\mathcal{V},\alpha,N)$ and it follows that $m(f_{1,\infty},K,s,\mathcal{U},\alpha)\leq m(f_{1,\infty},K,s,\mathcal{V},\alpha)$. 
Note that for any $\alpha> D(f_{1,\infty},K,s,\mathcal{V})$, $m(f_{1,\infty},K,s,\mathcal{V},\alpha)$=0 and this implies that $m(f_{1,\infty},K,s,\mathcal{U},\alpha)=0$, thus $\alpha\geq D(f_{1,\infty},K,s,\mathcal{U})$. The we have $D(f_{1,\infty},K,s,\mathcal{V})\geq D(f_{1,\infty},K,s,\mathcal{U})$.

(2) Since $\mathcal{U}_i\preceq \bigvee_{i=1}^m \mathcal{U}_i$ for any $1\leq i\leq m$, then this result follows from (1).

(3) This inequality is clear if $K\subseteq Z\subseteq X$ and the collection $\mathcal{G}_{N,f_{1,\infty}}$ covers $Z$, then $\mathcal{G}_{N,f_{1,\infty}}$ also covers $K$. 

(4) The proof of the subadditivity of $m(f_{1,\infty},K_i,s,\mathcal{U},\alpha)$ is similar to \cite[Proposition 1.1]{Pes1997}, so we omit it. By (3), then $D(f_{1,\infty},\bigcup_{i\geq 1}K_i,s)\geq \sup_{i\geq 1}D(f_{1,\infty},K_i,s)$. 
To prove the inverse inequality, given open cover $\mathcal{U}$ of $X$, for any $\alpha>D(f_{1,\infty},K_i,s,\mathcal{U})$ for all $i \geq 1$, then $m(f_{1,\infty},K_i,s,\mathcal{U},\alpha)=0$ and hence by the subadditivity of $m(f_{1,\infty},K_i,s,\mathcal{U},\alpha)$, we get $m(f_{1,\infty},\bigcup_{i\geq 1}K_i,s,\mathcal{U},\alpha)=0$. Therefore $\alpha\geq D(f_{1,\infty},\bigcup_{i\geq 1}K_i,s,\mathcal{U})$. This implies that $D(f_{1,\infty},K_i,s,\mathcal{U}) \geq D(f_{1,\infty},\bigcup_{i\geq 1}K_i,s,\mathcal{U})$ and then $D(f_{1,\infty},\bigcup_{i\geq 1}K_i,s)\leq \sup_{i\geq 1}D(f_{1,\infty},K_i,s)$ by taking the supremum of all covers $\mathcal{U}$ of $X$. The last equality $D(f_{1,\infty},\bigcup_{i\geq 1}K_i)= \sup_{i\geq 1}D(f_{1,\infty},K_i)$ follows directly from the preceding discussion.
\end{proof}

\begin{proposition}\label{P:distance}
Let $(X,d)$ be a compact metric space, $(X,f_{1,\infty})$ be an NDS, and $K$ be a nonempty subset of $X$. Then for any $s>0$, 
\[
D(f_{1,\infty},K,s)=\lim_{|\mathcal{U}|\to 0} D(f_{1,\infty},K,s,\mathcal{U}).\]
\end{proposition}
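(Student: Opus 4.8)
The plan is to exploit the monotonicity of $D(f_{1,\infty},K,s,\,\cdot\,)$ under refinement, established in \Cref{p:properties}(1), together with the Lebesgue number lemma for compact metric spaces. Recall that $D(f_{1,\infty},K,s)=\sup_{\mathcal{U}}D(f_{1,\infty},K,s,\mathcal{U})$ by definition, and that $D(f_{1,\infty},K,s,\mathcal{U})\le D(f_{1,\infty},K,s)$ for every finite open cover $\mathcal{U}$ of $X$. Consequently $\mathop{\lim\sup}_{|\mathcal{U}|\to 0}D(f_{1,\infty},K,s,\mathcal{U})\le D(f_{1,\infty},K,s)$ holds trivially, and the only thing left to prove is the matching lower bound for the lower limit.

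For the lower bound, fix $\varepsilon>0$. By the definition of the supremum there is a finite open cover $\mathcal{V}$ of $X$ with $D(f_{1,\infty},K,s,\mathcal{V})>D(f_{1,\infty},K,s)-\varepsilon$ (in the case $D(f_{1,\infty},K,s)=+\infty$ one instead chooses $\mathcal{V}$ with $D(f_{1,\infty},K,s,\mathcal{V})>M$ for an arbitrary prescribed $M>0$, and runs the same argument). Since $X$ is a compact metric space, $\mathcal{V}$ admits a Lebesgue number $\delta>0$. Then for any finite open cover $\mathcal{U}$ of $X$ with $|\mathcal{U}|<\delta$, every member of $\mathcal{U}$ has diameter less than $\delta$ and is therefore contained in some member of $\mathcal{V}$; that is, $\mathcal{V}\preceq\mathcal{U}$. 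Applying \Cref{p:properties}(1) gives $D(f_{1,\infty},K,s,\mathcal{U})\ge D(f_{1,\infty},K,s,\mathcal{V})>D(f_{1,\infty},K,s)-\varepsilon$.

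As $\varepsilon>0$ was arbitrary, this shows $\mathop{\lim\inf}_{|\mathcal{U}|\to 0}D(f_{1,\infty},K,s,\mathcal{U})\ge D(f_{1,\infty},K,s)$, so together with the easy upper bound the limit exists and equals $D(f_{1,\infty},K,s)$. The statement carries, of course, the implicit reading that ``$|\mathcal{U}|\to 0$'' means: for every $\varepsilon>0$ there is $\delta>0$ such that $\bigl|D(f_{1,\infty},K,s,\mathcal{U})-D(f_{1,\infty},K,s)\bigr|<\varepsilon$ whenever $|\mathcal{U}|<\delta$. No genuine obstacle arises; the only points requiring a little care are the bookkeeping in the case $D(f_{1,\infty},K,s)=+\infty$, handled by the parenthetical substitution above, and invoking the Lebesgue number lemma to convert the metric condition $|\mathcal{U}|<\delta$ into the combinatorial refinement condition $\mathcal{V}\preceq\mathcal{U}$ that feeds \Cref{p:properties}(1).
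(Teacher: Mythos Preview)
Your proof is correct and follows essentially the same approach as the paper: both use the trivial upper bound from the supremum definition, then obtain the lower bound by fixing an open cover $\mathcal{V}$, invoking its Lebesgue number $\delta$ so that any cover $\mathcal{U}$ with $|\mathcal{U}|<\delta$ refines $\mathcal{V}$, and applying the monotonicity from \Cref{p:properties}(1). The only cosmetic difference is that you first choose $\mathcal{V}$ within $\varepsilon$ of the supremum and then let $\varepsilon\to 0$, whereas the paper takes an arbitrary $\mathcal{V}$ and passes to the supremum at the end; your explicit handling of the $D(f_{1,\infty},K,s)=+\infty$ case is a small bonus.
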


\begin{proof}
Note that $D(f_{1,\infty},K,s)$ is the supremum of $D(f_{1,\infty},K,s,\mathcal{U})$, then 
\[
D(f_{1,\infty},K,s)\geq \mathop{\lim\sup}_{|\mathcal{U}|\to0}D(f_{1,\infty},K,s,\mathcal{U})\geq \mathop{\lim\inf}_{|\mathcal{U}|\to0}D(f_{1,\infty},K,s,\mathcal{U}).
\]
To show the inverse inequality. Let $\mathcal{V}$ be a finite open cover of $X$, and $\mathcal{U}$ be another finite open cover of $X$ with diameter smaller than the Lebesgue number of $\mathcal{V}$. Immediately, we have $\mathcal{V}\preceq \mathcal{U}$. Then by (1) of Proposition ~\ref{p:properties}, it follows that $D(f_{1,\infty},K,s,\mathcal{V})\leq D(f_{1,\infty},K,s,\mathcal{U})$. Since $X$ is compact and has finite open covers of arbitrarily small diameter, then
\[
D(f_{1,\infty},K,s,\mathcal{V})\leq \mathop{\lim\inf}_{|\mathcal{U}|\to0}D(f_{1,\infty},K,s,\mathcal{U}).
\]
This implies that 
\[
D(f_{1,\infty},K,s)\leq \mathop{\lim\inf}_{|\mathcal{U}|\to0}D(f_{1,\infty},K,s,\mathcal{U}).
\]
Therefore we get $D(f_{1,\infty},K,s)=\lim_{|\mathcal{U}|\to 0} D(f_{1,\infty},K,s,\mathcal{U})$.
\end{proof}

\begin{theorem}\label{T:inequality} Let $(X,f_{1,\infty})$ be an NDS, and $K$ be a nonempty $f_{1,\infty}$-invariant subset of $X$. Then for any $1\leq i\leq j<\infty$,
\[
D(f_{i,\infty},K)\leq D(f_{j,\infty},K).
\]
\end{theorem}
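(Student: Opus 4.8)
The plan is to reduce everything to the consecutive case and to a fixed open cover. Since $f_\ell^{-1}(K)=K$ for every $\ell\ge 1$, the set $K$ is invariant for each shifted system $f_{\ell,\infty}$, so from the chain $D(f_{i,\infty},K)\le D(f_{i+1,\infty},K)\le\cdots\le D(f_{j,\infty},K)$ it suffices to prove $D(f_{i,\infty},K)\le D(f_{i+1,\infty},K)$ for an arbitrary $i\ge 1$. For this I would establish the sharper estimate that for every finite open cover $\mathcal{U}$ of $X$, every $s>0$ and every $\alpha\ge 0$,
\[
m(f_{i,\infty},K,s,\mathcal{U},\alpha)\ \le\ \#\mathcal{U}\cdot m(f_{i+1,\infty},K,s,\mathcal{U},\alpha),
\]
where $\#\mathcal{U}$ denotes the cardinality of $\mathcal{U}$; the standard critical-value argument then passes this up to the entropy dimensions, exactly as in the proofs already given in the paper.

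The heart of the matter is a string-prepending construction. Using the identity $f_i^j=f_{i+1}^{j-1}\circ f_i$ for $j\ge 1$, one checks that for any string $\mathbf{V}=(V_0,\dots,V_{\ell-1})\in S(\mathcal{U})$ and any $U_0\in\mathcal{U}$ the string $\mathbf{W}=(U_0,V_0,\dots,V_{\ell-1})$, of length $\ell+1$, satisfies
\[
X_{f_{i,\infty}}(\mathbf{W})=U_0\cap f_i^{-1}\big(X_{f_{i+1,\infty}}(\mathbf{V})\big),
\]
and hence $\bigcup_{U_0\in\mathcal{U}}X_{f_{i,\infty}}(U_0,\mathbf{V})=f_i^{-1}\big(X_{f_{i+1,\infty}}(\mathbf{V})\big)$ because $\mathcal{U}$ covers $X$. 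Consequently, if $\mathcal{G}\subseteq S(\mathcal{U})$ is a finite or countable family of strings of length $\ge N$ covering $K$ with respect to $f_{i+1,\infty}$, then $\mathcal{G}':=\{(U_0,\mathbf{V}):U_0\in\mathcal{U},\ \mathbf{V}\in\mathcal{G}\}$ is a family of strings of length $\ge N+1$ with
\[
\bigcup_{\mathbf{W}\in\mathcal{G}'}X_{f_{i,\infty}}(\mathbf{W})=f_i^{-1}\Big(\bigcup_{\mathbf{V}\in\mathcal{G}}X_{f_{i+1,\infty}}(\mathbf{V})\Big)\supseteq f_i^{-1}(K)=K,
\]
so $\mathcal{G}'$ covers $K$ with respect to $f_{i,\infty}$ and is admissible in the definition of $M(f_{i,\infty},K,s,\mathcal{U},\alpha,N)$. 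This is the single place where the $f_{1,\infty}$-invariance of $K$ enters, and, as the remark following the analogous theorem for the classical entropy dimension indicates, it cannot be dropped, since in general the pullback of a cover of $K$ gives no control over covers of $K$.

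To finish I compare the weights: for $\alpha\ge 0$ one has $e^{-\alpha(\ell+1)^s}\le e^{-\alpha\ell^s}$, whence
\[
\sum_{\mathbf{W}\in\mathcal{G}'}e^{-\alpha m(\mathbf{W})^s}=\#\mathcal{U}\sum_{\mathbf{V}\in\mathcal{G}}e^{-\alpha(m(\mathbf{V})+1)^s}\le \#\mathcal{U}\sum_{\mathbf{V}\in\mathcal{G}}e^{-\alpha m(\mathbf{V})^s};
\]
taking the infimum over admissible $\mathcal{G}$ gives $M(f_{i,\infty},K,s,\mathcal{U},\alpha,N)\le\#\mathcal{U}\cdot M(f_{i+1,\infty},K,s,\mathcal{U},\alpha,N)$, and letting $N\to\infty$ yields the displayed inequality for $m$. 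Since any admissible family consists of strings of length $\ge N$, one sees $m(\,\cdot\,,K,s,\mathcal{U},\alpha)=+\infty$ for $\alpha<0$, so $D(f_{i+1,\infty},K,s,\mathcal{U})\ge 0$; hence the finite factor $\#\mathcal{U}\ge 1$ cannot change whether $m(f_{i,\infty},K,s,\mathcal{U},\alpha)$ vanishes, and we get $D(f_{i,\infty},K,s,\mathcal{U})\le D(f_{i+1,\infty},K,s,\mathcal{U})$. Taking the supremum over all finite open covers $\mathcal{U}$ gives $D(f_{i,\infty},K,s)\le D(f_{i+1,\infty},K,s)$ for every $s>0$, and then for any $s_0>D(f_{i+1,\infty},K)$ we have $D(f_{i+1,\infty},K,s_0)=0$, so $D(f_{i,\infty},K,s_0)=0$ and $s_0\ge D(f_{i,\infty},K)$; letting $s_0$ decrease to $D(f_{i+1,\infty},K)$ gives the claim. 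The main obstacle is the combinatorial construction of $\mathcal{G}'$ together with the verification that it still covers $K$; the shift of length by one, the harmless cardinality factor, and the passage from the $s$-entropies to the dimension are all routine.
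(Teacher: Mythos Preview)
Your proof is correct and follows essentially the same approach as the paper: reduce to the consecutive case, prepend each element of the cover to every string in an admissible family for $f_{i+1,\infty}$, use $f_i^{-1}(K)=K$ to see the resulting family covers $K$ for $f_{i,\infty}$, and then pass through the critical-value arguments. Your write-up is in fact slightly more careful than the paper's, since you track the harmless factor $\#\mathcal{U}$ explicitly (the paper's displayed inequality $M(f_{k+1,\infty},\ldots,N-1)\ge M(f_{k,\infty},\ldots,N)$ suppresses it) and you treat $\alpha<0$ separately, but the mechanism is identical.
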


\begin{proof}
We need only to prove $D(f_{k+1,\infty},K,s)\geq D(f_{k,\infty},K,s)$ holds for any $k\in\mathbb{N}$ and $s>0$. For this purpose, we have to show that for $\alpha>0$, $N>1$ and some open cover $\mathcal{U}$ of $X$, then 
\[
M(f_{k+1,\infty},K,s,\mathcal{U},\alpha,N-1)\geq M(f_{k,\infty},K,s,\mathcal{U},\alpha,N)
\]

Let $\mathcal{V}=\{V_1,\dots,V_m\}$ be an open cover of $X$. Then $X\subseteq \bigcup_{1\leq j\leq m}V_j$. For any $\mathbf{V}=(V_{i_0},\dots,V_{i_{m(\mathbf{V})-1}})\in \mathcal{G}_{N-1,f_{k+1,\infty}}$ with $m(\mathbf{V})\geq N-1$ and $K\subseteq \bigcup_{\mathbf{V}\in \mathcal{G}_{N-1,f_{k+1,\infty}}}X_{f_{k+1,\infty}}(\mathbf{V})$, where 
\begin{align*}
X_{f_{k+1,\infty}}(\mathbf{V})&=\{x\in X:f_{k+1}^j(x)\in V_{i_j},j=0,1,\dots,m(\mathbf{V})-1 \}\\
	&=V_{i_0}\cap f_{k+1}^{-1}(V_{i_1})\cap\dots\cap f_{k+1}^{-(m(\mathbf{V})-1)}(V_{i_{m(\mathbf{V})-1}}).
\end{align*}
We set $\mathbf{U}:=(V_t,\mathbf{V})=(V_t,V_{i_0},\dots,V_{i_{m(\mathbf{V})-1}})$ with some $V_t\in\mathcal{V}$ and $m(\mathbf{U})\geq N$.
It is clear that
\begin{align*}
X_{f_{k,\infty}}(\mathbf{U})&=V_t\cap f_k^{-1}(V_{i_0})\cap\dots\cap f_k^{-m(\mathbf{V})}(V_{i_{m(\mathbf{V})-1}})\\
	&=V_{t}\cap f_k^{-1}\left(V_{i_0}\cap f_{k+1}^{-1}(V_{i_1})\cap\dots\cap f_{k+1}^{-(m(\mathbf{V})-1)}(V_{i_{m(\mathbf{V})-1}}) \right)\\
	&=V_t\cap f_k^{-1}\left(X_{f_{k+1,\infty}}(\mathbf{V})\right).
\end{align*}

When $K$ is $f_{1,\infty}$-invariant, by a method proposed by \cite{JY2021}, then 
\begin{align*}
K&=f_k^{-1}(K)\subseteq \bigcup_{\mathbf{V}\in \mathcal{G}_{N-1,f_{k,\infty}}}f_k^{-1}(X_{f_{k+1,\infty}}(\mathbf{V}))\\
	&= \bigcup_{\mathbf{V}\in \mathcal{G}_{N-1,f_{k,\infty}}}f_k^{-1}\left(V_{i_0}\cap f_{k+1}^{-1}(V_{i_1})\cap\dots\cap f_{k+1}^{-(m(\mathbf{V})-1)}(V_{i_{m(\mathbf{V})-1}})\right)\bigcap X\\
	& \subseteq \bigcup_{\mathbf{V}\in \mathcal{G}_{N-1,f_{k,\infty}}}f_k^{-1}\left(V_{i_0}\cap f_{k+1}^{-1}(V_{i_1})\cap\dots\cap f_{k+1}^{-(m(\mathbf{V})-1)}(V_{i_{m(\mathbf{V})-1}})\right)\bigcap \Big(\bigcup_{1\leq i\leq m} V_i\Big)\\
	&=\bigcup_{\mathbf{V}\in \mathcal{G}_{N-1,f_{k,\infty}}} \bigcup_{1\leq t\leq m} \left(V_t\cap f_k^{-1}(V_{i_0})\cap f_{k}^{-2}(V_{i_1})\cap\dots\cap f_{k}^{-m(\mathbf{V})}(V_{i_{m(\mathbf{V})-1}})\right).
\end{align*}

Then the set $\{V_t\cap f_k^{-1}(V_{i_0})\cap f_{k}^{-2}(V_{i_1})\cap\dots\cap f_{k}^{-m(\mathbf{V})}(V_{i_{m(\mathbf{V})-1}})\}_{1\leq t\leq m, \mathbf{V}\in \mathcal{G}_{N-1,f_{k,\infty}}}=:\mathcal{G}_{N,f_{k,\infty}}$ also covers $K$.

By the definition of $M(f_{k+1,\infty},K,s,\mathcal{U},\alpha,N-1)$, note that the infimum is taken over all finite or countable collections of strings $\mathcal{G}_{N-1,f_{k-1,\infty}}$ such that $m(\mathbf{V})\geq N-1$ for all $\mathbf{V}\in \mathcal{G}_{N-1,f_{k+1,\infty}}$ and $\mathcal{G}_{N-1,f_{k+1,\infty}}$ covers $K$. Then
\begin{align*}
M(f_{k+1,\infty},K,s,\mathcal{U},\alpha,N-1)&=\inf_{\mathcal{G}_{N-1,f_{k+1,\infty}}}\left\{\sum_{\mathbf{V}\in \mathcal{G}_{N-1,f_{k+1,\infty}}}e^{-\alpha m(\mathbf{V})^s} \right\}\\
	&\geq \inf_{\mathcal{G}_{N,f_{k,\infty}}}\left\{\sum_{\mathbf{U}\in\mathcal{G}_{N,f_{k,\infty}}}e^{-\alpha m(\mathbf{U})^s} \right\}.
\end{align*}
One can easily verify that $M(f_{k+1,\infty},K,s,\mathcal{U},\alpha,N-1)\geq M(f_{k,\infty},K,s,\mathcal{U},\alpha,N)$. Now it is routine to follow the definition of Pesin $s$-topological entropy, then for all $k\in\mathbb{N}$, we have $D(f_{k+1,\infty},K,s)\geq D(f_{k,\infty},K,s)$.

For the rest of this proof, for any $s_0>D(f_{k+1,\infty},K)$, $D(f_{k+1,\infty},K,s_0)=0$ implies that $D(f_{k,\infty},K,s_0)=0$. This further implies that $s_0\geq D(f_{k,\infty},K)$ and thus we get $D(f_{k+1,\infty},K)\geq D(f_{k,\infty},K)$.
\end{proof}

\begin{theorem}\label{T:power}
Let $(X,f_{1,\infty})$ be an NDS, and $K$ be a nonempty subset of $X$. Then for any $k\in\mathbb{Z}$, 
\[
D(f_{1,\infty}^k,K)\leq D(f_{1,\infty},K).
\]
\end{theorem}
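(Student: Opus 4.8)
The plan is to follow the scheme of \Cref{T:hk}, first establishing an inequality between the Pesin $s$-topological entropies and then reading off the dimension inequality. Since no separated/spanning-set description of $D(f_{1,\infty},K,s)$ is available, I will work directly with the Carath\'eodory--Pesin structure on strings, aiming to prove
\[
D(f_{1,\infty}^k,K,s)\ \le\ k^{s}\,D(f_{1,\infty},K,s)\qquad\text{for all }s>0,\ k\in\mathbb{N}.
\]
The basic tool is the identity \eqref{g1j}: with $g_j=f_{(j-1)k+1}^k$ one has $g_1^j=f_1^{jk}$, so the $j$-th coordinate of an $f_{1,\infty}^k$-string constrains the orbit of a point only at time $jk$. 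Accordingly, I would fix a finite open cover $\mathcal{U}$ of $X$ and associate to each $f_{1,\infty}$-string $\mathbf{W}=(W_0,\dots,W_{L-1})\in S(\mathcal{U})$ the ``$k$-subsampled'' $f_{1,\infty}^k$-string $\mathbf{W}'=(W_0,W_k,W_{2k},\dots,W_{(L'-1)k})\in S(\mathcal{U})$ of length $L'=\lfloor(L-1)/k\rfloor+1$. Since $\mathbf{W}'$ retains only a subset of the constraints defining $\mathbf{W}$, one gets $X_{f_{1,\infty}}(\mathbf{W})\subseteq X_{f_{1,\infty}^k}(\mathbf{W}')$; hence if a family $\mathcal{G}\subseteq S(\mathcal{U})$ of $f_{1,\infty}$-strings covers $K$, then so does $\mathcal{G}':=\{\mathbf{W}':\mathbf{W}\in\mathcal{G}\}$.

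The next step is the length/weight comparison. Writing $L-1=qk+r$ with $0\le r\le k-1$ gives $L'=q+1\ge(qk+r+1)/k=L/k$, so for any $\alpha'>0$ one has $e^{-\alpha'(L')^{s}}\le e^{-(\alpha'/k^{s})L^{s}}$. Combining this with the covering statement above and taking infima over all admissible families, I would obtain, with $\alpha=\alpha'/k^{s}$ and $N'=\lfloor N/k\rfloor$,
\[
M\bigl(f_{1,\infty}^k,K,s,\mathcal{U},\alpha',N'\bigr)\ \le\ M\bigl(f_{1,\infty},K,s,\mathcal{U},\alpha,N\bigr),
\]
using that $\mathbf{W}\in S_L(\mathcal{U})$ with $L\ge N$ yields $\mathbf{W}'\in S_{L'}(\mathcal{U})$ with $L'\ge N'$, and that distinct $\mathbf{W}$ collapsing to the same $\mathbf{W}'$ only decreases the sum since all summands are positive. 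Letting $N\to\infty$ (so $N'\to\infty$) and using monotonicity of the approximants in $N$ gives $m(f_{1,\infty}^k,K,s,\mathcal{U},\alpha')\le m(f_{1,\infty},K,s,\mathcal{U},\alpha'/k^{s})$ for every $\alpha'\in\mathbb{R}$.

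From here the argument is routine. For $\alpha'>k^{s}D(f_{1,\infty},K,s,\mathcal{U})$ the right-hand side vanishes, hence so does $m(f_{1,\infty}^k,K,s,\mathcal{U},\alpha')$, giving $\alpha'\ge D(f_{1,\infty}^k,K,s,\mathcal{U})$; letting $\alpha'$ decrease to $k^{s}D(f_{1,\infty},K,s,\mathcal{U})$ and then taking the supremum over all finite open covers $\mathcal{U}$ of $X$ yields $D(f_{1,\infty}^k,K,s)\le k^{s}D(f_{1,\infty},K,s)$. Finally, if $s_0>D(f_{1,\infty},K)$ then $D(f_{1,\infty},K,s_0)=0$, so $D(f_{1,\infty}^k,K,s_0)\le k^{s_0}\cdot0=0$, whence $s_0\ge D(f_{1,\infty}^k,K)$; letting $s_0\downarrow D(f_{1,\infty},K)$ gives $D(f_{1,\infty}^k,K)\le D(f_{1,\infty},K)$, as required.

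The one point that genuinely requires care is the \emph{direction} of the string correspondence: one must pass from $f_{1,\infty}$-strings down to $f_{1,\infty}^k$-strings by subsampling (discarding constraints), so that covering families map to covering families within the \emph{same} cover $\mathcal{U}$ and the weight comparison points the right way; lifting $f_{1,\infty}^k$-strings to $f_{1,\infty}$-strings would force one to enlarge $\mathcal{U}$ (by $X$) and would produce the opposite inequality. Beyond that, only the bookkeeping with the truncation parameters $N$, the membership in the families $\mathcal{G}_{N,f_{1,\infty}}$, and the monotone limits needs checking; the case $k\le 0$, if intended, is either trivial or reduces to the homeomorphism setting and is not the substantive one.
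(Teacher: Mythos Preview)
Your proposal is correct and follows essentially the same route as the paper: subsample an $f_{1,\infty}$-string $\mathbf{U}$ at the $k$-multiples to get an $f_{1,\infty}^k$-string $\mathbf{V}$ with $X_{f_{1,\infty}}(\mathbf{U})\subseteq X_{f_{1,\infty}^k}(\mathbf{V})$ and $m(\mathbf{V})\ge m(\mathbf{U})/k$, compare the Carath\'eodory sums to obtain $M(f_{1,\infty}^k,K,s,\mathcal{U},\alpha,N)\le M(f_{1,\infty},K,s,\mathcal{U},\alpha/k^{s},kN)$, and then read off $D(f_{1,\infty}^k,K,s)\le k^{s}D(f_{1,\infty},K,s)$ and the dimension inequality. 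The only cosmetic point is that the weight comparison needs $\alpha'\ge 0$ rather than $\alpha'\in\mathbb{R}$, but your subsequent use of it only involves $\alpha'>k^{s}D(f_{1,\infty},K,s,\mathcal{U})\ge 0$, so this is harmless.
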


\begin{proof}
To prove this result, we need to prove that for any $k\in \mathbb{N}$ and $s>0$, then 
\[
D(f_{1,\infty}^k,K,s)\leq k^s D(f_{1,\infty},K,s).
\]
Let $\mathcal{U}$ be a finite open cover of $X$. We use $S_{n,f_{1,\infty}}(\mathcal{U})$ and $S_{n,f_{1,\infty}^k}(\mathcal{U})$ to denote the set of strings with length $n$ in $(X,f_{1,\infty})$ and $(X,f_{1,\infty}^k)$, respectively. Let $\mathcal{G}_{f_{1,\infty}}\subseteq \bigcup_{j\geq kN} S_{j,f_{1,\infty}}(\mathcal{U})$ cover $K$ and $\mathbf{U}=(U_{i_0},U_{i_1},\dots,U_{i_{j-1}})\in\mathcal{G}_{f_{1,\infty}}$. 
Notice that $f_{1,\infty}^k=\{f_1^k,f_{k+1}^k,f_{2k+1}^k,\dots\}$ and
\begin{align}\label{Xu}
X_{f_{1,\infty}}(\mathbf{U})&=\{x\in X: f_1^j(x)\in U_{i_j}, 0\leq j\leq m(\mathbf{U})-1\},\nonumber \\
	X_{f_{1,\infty}^k}(\mathbf{U})&=\{x\in X: f_1^{jk}(x)\in U_{i_j}, 0\leq j\leq m(\mathbf{U})-1\}.
\end{align}

We set $V_{i_0}=U_{i_0}, V_{i_1}=U_{i_k},\dots,V_{i_n}=U_{i_{kn}}$, $\mathbf{V}=(V_{i_0},V_{i_1},\dots,V_{i_n})$, and denote the collection of all such $\mathbf{V}$ by $\mathcal{G}_{f_{1,\infty}^k}$. 

It is clear that $X_{f_{1,\infty}}(\mathbf{U})\subseteq X_{f_{1,\infty}^k}(\mathbf{V})$, $\mathcal{G}_{f_{1,\infty}^k}$ covers $K$ and $\mathcal{G}_{f_{1,\infty}^k}\subseteq \bigcup_{j\geq N}S_{j,f_{1,\infty}^k}(\mathcal{U})$. Note that $m(\mathbf{U})-1$ can be represented as
\[
m(\mathbf{U})-1=kp+q, \quad 0\leq q<k, p\geq N.
\]
Then
\[
\frac{m(\mathbf{U})}{k}=p+\frac{q+1}{k}\leq n+1=m(\mathbf{V}).
\]
Therefore
\begin{align*}
M(f_{1,\infty},K,s,\mathcal{U},\frac{\alpha}{k^s},kN)&=\inf_{\mathcal{G}_{f_{1,\infty}} \text{covers } K}\left \{\sum_{\mathbf{U}\in\mathcal{G}_{f_{1,\infty}}} e^{-\frac{\alpha}{k^s}\cdot m(\mathbf{U})^s} \right\}\\
	&\geq \inf_{\mathcal{G}_{f_{1,\infty}^k} \text{covers } K}\left\{ \sum_{\mathbf{V}\in\mathcal{G}_{f_{1,\infty}^k}}e^{-\alpha\cdot m(\mathbf{V})^s} \right\}\\
	&=M(f_{1,\infty}^k,K,s,\mathcal{U},\alpha,N).
\end{align*}
Letting $N\to\infty$,
\[
m(f_{1,\infty},K,s,\mathcal{U},\frac{\alpha}{k^s})\geq m(f_{1,\infty}^k,K,s,\mathcal{U},\alpha).
\]
It follows that 
\[
k^s\cdot D(f_{1,\infty},K,s,\mathcal{U})\geq D(f_{1,\infty}^k,K,s,\mathcal{U}).
\]
Then taking the supremum of all open covers $\mathcal{U}$ of $X$, 
\[
D(f_{1,\infty}^k,K,s)\leq k^s D(f_{1,\infty},K,s).
\]
Now for any $s_0>D(f_{1,\infty},K)$, then $D(f_{1,\infty},K,s_0)=0$ implies $D(f_{1,\infty}^k,K,s_0)=0$. This further implies that $s_0\geq D(f_{1,\infty}^k,K)$. Therefore we have $D(f_{1,\infty}^k,K)\leq D(f_{1,\infty},K)$.
\end{proof}



\begin{theorem}\label{T:powerrule}
Let $(X,d)$ be a compact metric space, $(X,f_{1,\infty})$ be an NDS, and $K$ be a nonempty subset of $X$. If the sequence $f_{1,\infty}$ is equicontinuous. Then for any $k\in\mathbb{N}$,
\[
D(f_{1,\infty}^k,K)= D(f_{1,\infty},K).
\]
\end{theorem}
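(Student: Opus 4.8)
The plan is to reduce the assertion to a pointwise identity in $s$ and then combine it with the power inequality already available. The bound $D(f_{1,\infty}^k,K,s)\le k^s\,D(f_{1,\infty},K,s)$ for all $s>0$ is established inside the proof of \Cref{T:power}, so it suffices to prove the matching lower bound $D(f_{1,\infty}^k,K,s)\ge k^s\,D(f_{1,\infty},K,s)$; the two together give $D(f_{1,\infty}^k,K,s)=k^s\,D(f_{1,\infty},K,s)$ for every $s>0$, and since $0<k^s<\infty$, the functions $s\mapsto D(f_{1,\infty}^k,K,s)$ and $s\mapsto D(f_{1,\infty},K,s)$ are then $+\infty$ (respectively $0$) for exactly the same values of $s$, hence jump from $+\infty$ to $0$ at the same critical value, which is precisely the asserted equality $D(f_{1,\infty}^k,K)=D(f_{1,\infty},K)$.

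To get the lower bound I would build a string-lifting map, using equicontinuity to recover the ``intermediate'' iterates that an accelerated string ignores. Fix a finite open cover $\mathcal{W}$ of $X$ and let $\eta>0$ be a Lebesgue number for $\mathcal{W}$. Iterating the equicontinuity modulus of $f_{1,\infty}$ at most $k-1$ times --- exactly as in the proof of the preceding theorem, cf.\ \cite{KS1996} --- one finds $\delta>0$ such that $d(x,y)<\delta$ implies $d(f_i^j(x),f_i^j(y))<\eta$ for all $i\ge 1$ and $0\le j\le k-1$. Now pick a finite open cover $\mathcal{V}$ of $X$ with $|\mathcal{V}|<\delta$. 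The key point is that every string $\mathbf{V}=(V_{i_0},\dots,V_{i_{n-1}})\in S_n(\mathcal{V})$ for the system $f_{1,\infty}^k$ lifts to a string of length $kn$ over $\mathcal{W}$ for $f_{1,\infty}$: since $f_1^{mk}$ maps $X_{f_{1,\infty}^k}(\mathbf{V})$ into $V_{i_m}$, a set of diameter $<\delta$, the chosen $\delta$ (applied with base index $mk+1$) forces $f_1^{mk+j}\bigl(X_{f_{1,\infty}^k}(\mathbf{V})\bigr)$ to have diameter $<\eta$ for each $0\le j\le k-1$, hence to be contained in some $W_{m,j}\in\mathcal{W}$; concatenating the $W_{m,j}$ in the natural order produces $\mathbf{W}(\mathbf{V})\in S_{kn}(\mathcal{W})$ with $X_{f_{1,\infty}^k}(\mathbf{V})\subseteq X_{f_{1,\infty}}(\mathbf{W}(\mathbf{V}))$ and $m(\mathbf{W}(\mathbf{V}))=k\,m(\mathbf{V})$. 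Empty cells $X_{f_{1,\infty}^k}(\mathbf{V})$ are simply discarded, and the non-injectivity of $\mathbf{V}\mapsto\mathbf{W}(\mathbf{V})$ does no harm.

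The estimate then runs as in \Cref{T:power}. If $\mathcal{G}\subseteq S(\mathcal{V})$ covers $K$ with $m(\mathbf{V})\ge N$ for all $\mathbf{V}\in\mathcal{G}$, then $\{\mathbf{W}(\mathbf{V}):\mathbf{V}\in\mathcal{G}\}\subseteq S(\mathcal{W})$ covers $K$ with all lengths $\ge kN$, whence
\[
M(f_{1,\infty},K,s,\mathcal{W},\alpha,kN)\le\sum_{\mathbf{V}\in\mathcal{G}}e^{-\alpha\,(k\,m(\mathbf{V}))^s}=\sum_{\mathbf{V}\in\mathcal{G}}e^{-(k^s\alpha)\,m(\mathbf{V})^s}.
\]
Taking the infimum over $\mathcal{G}$ gives $M(f_{1,\infty},K,s,\mathcal{W},\alpha,kN)\le M(f_{1,\infty}^k,K,s,\mathcal{V},k^s\alpha,N)$, and letting $N\to\infty$ yields $m(f_{1,\infty},K,s,\mathcal{W},\alpha)\le m(f_{1,\infty}^k,K,s,\mathcal{V},k^s\alpha)$. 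Consequently, for every $\alpha<D(f_{1,\infty},K,s,\mathcal{W})$ the left-hand side equals $+\infty$, so $m(f_{1,\infty}^k,K,s,\mathcal{V},k^s\alpha)=+\infty$ and therefore $k^s\alpha\le D(f_{1,\infty}^k,K,s,\mathcal{V})\le D(f_{1,\infty}^k,K,s)$; letting $\alpha$ increase to $D(f_{1,\infty},K,s,\mathcal{W})$ and then taking the supremum over all finite open covers $\mathcal{W}$ of $X$ gives $D(f_{1,\infty}^k,K,s)\ge k^s\,D(f_{1,\infty},K,s)$, which finishes the argument.

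The only genuinely new step --- and the main obstacle --- is the lifting construction: for a general NDS the cell $X_{f_{1,\infty}^k}(\mathbf{V})$ pins down only the iterates $f_1^{0},f_1^{k},f_1^{2k},\dots$, and equicontinuity is exactly what is needed to control the diameters of the intervening iterates $f_1^{mk+1},\dots,f_1^{mk+k-1}$ on that cell, uniformly in $m$, so that an accelerated string of length $n$ over the fine cover $\mathcal{V}$ becomes a bona fide string of length $kn$ over the coarser cover $\mathcal{W}$. Everything downstream --- the rescaling of $\alpha$ by $k^s$, the passage to the limit in $N$, and the comparison of critical values --- duplicates the bookkeeping in the proof of \Cref{T:power}, so I would refer to that rather than rewrite it.
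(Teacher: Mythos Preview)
Your proof is correct and follows essentially the same strategy as the paper's: invoke \Cref{T:power} for one inequality and, for the other, use equicontinuity to lift a length-$n$ string for $f_{1,\infty}^k$ to a length-$kn$ string for $f_{1,\infty}$, then compare the $M$-sums with $\alpha$ rescaled by $k^s$. The only cosmetic difference is that you fix a target cover $\mathcal{W}$ and pick a fine $\mathcal{V}$ via its Lebesgue number (so the lifted symbol $W_{m,j}$ may depend on the block index $m$), whereas the paper starts from a given cover $\mathcal{U}$ and builds an enlarged cover $\mathcal{U}'$ directly; the underlying lifting idea and the downstream bookkeeping are the same.
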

	
\begin{proof}
By ~\Cref{T:power}, it suffices to prove $D(f_{1,\infty}^k,K)\geq D(f_{1,\infty},K)$. To be more detailed, we have to prove that for any $s>0$ and $k\in \mathbb{N}$, $D(f_{1,\infty}^k,K,s)\geq k^s D(f_{1,\infty},K,s)$.

Note that $X$ is compact, and $\{f_i\}_{i= 1}^{\infty}$ is equicontinuous (See ~\Cref{D:equi2}), then for any $\varepsilon>0$, let
\[
\delta(\varepsilon)=\varepsilon+\sup_{i\geq1}\max_{1\leq j\leq k-1}\sup_{x,y\in X}\{d(f_i^j(x),f_i^j(y)):d(x,y)\leq \varepsilon\}.
\]
It is clear that $\varepsilon\to0$ implies $\delta(\varepsilon)\to 0$; And for any $U_{\lambda}\subseteq X$ with $|U_{\lambda}|\leq\varepsilon$, then $|f_i^j(U_{\lambda})|\leq \delta(\varepsilon)$ for any $i\geq 1$ and $j=1,2,\dots, k-1$. 

Based on this, let $\mathcal{U}$ be a finite open cover of $X$. We will construct a new open cover $\mathcal{U}'$ of $X$ from $\mathcal{U}$. For any $U_{\lambda}\in\mathcal{U}$, there exists a sequence of open sets $(U_{\lambda})_{j}\subseteq X$ such that for any $i\geq 1$, 
\begin{align}\label{equi}
f_i^j(U_{\lambda})\subseteq (U_{\lambda})_{j} \text{ and } |U_{\lambda}|\to0 \text{ implies } |(U_{\lambda})_{j}|\to 0,\quad 1\leq j\leq k-1.
\end{align}
Let 
\[
\mathcal{U}'=\{V:V\in\mathcal{U}\text{ or }V=(U_{\lambda})_{j}, 1\leq j\leq k-1\}.
\]
We can check that $\mathcal{U}'$ is indeed an open cover of $X$. For any $\mathcal{G}_{f_{1,\infty}^k} \subseteq \bigcup_{n\geq N}S_{n,f_{1,\infty}^k}(\mathcal{U})$ covers $K$, and $\mathbf{U}=(U_0,U_1,\dots,U_{n-1})\in \mathcal{G}_{f_{1,\infty}^k}$ with length $m(\mathbf{U})=n$, where $U_{\lambda}\in \mathcal{U}$ for $0\leq \lambda\leq n-1$, we set 
\[
\mathbf{V}=\left(U_0,(U_0)_1,\dots,(U_0)_{k-1},\dots,U_{n-1},(U_{n-1})_1,\dots, (U_{n-1})_{k-1}\right)\in S_{n,f_{1,\infty}}(\mathcal{U}').
\]
It is clear that $m(\mathbf{V})=kn$. Remember that
\[X_{f_{1,\infty}^k}(\mathbf{U})=\{x\in X: f_1^{jk}(x)\in U_{j}, 0\leq j\leq n-1\}.
\]
Then for any $x\in X_{f_{1,\infty}^k}(\mathbf{U})$, $f_1^{j k} (x)\in U_{j}$ for $0\leq j \leq n-1$. By ~\eqref{equi}, we have
\begin{align*}
x&\in U_0,f_1(x)\in (U_0)_1,\dots,f_1^{k-1}(x)\in (U_0)_{k-1},\\
	f_1^k(x)&\in U_1,\dots, f_1^{2k-1}(x)=f_{k+1}^{k-1}(f_1^k(x))\in f_{k+1}^{k-1}(U_1)\subseteq (U_1)_{k-1},\\
	&\dots\\
	f_1^{(n-1)k}(x)&\in U_{n-1},\dots,f_1^{nk-1}(x)=f_{(n-1)k+1}^{k-1}(f_1^{(n-1)k}(x))\in f_{(n-1)k+1}^{k-1}(U_{n-1})\subseteq (U_{n-1})_{k-1}.
\end{align*}
This means that $x\in X_{f_{1,\infty}}(\mathbf{V})$. Therefore, $X_{f_{1,\infty}^k}(\mathbf{U})\subseteq X_{f_{1,\infty}}(\mathbf{V})$. We denote the set of all such $\mathbf{V}$ by $\mathcal{G}_{f_{1,\infty}}$, then $\mathcal{G}_{f_{1,\infty}}$ also covers $K$ and $\mathcal{G}_{f_{1,\infty}}\subseteq \bigcup_{n\geq kN}S_{n,f_{1,\infty}}(\mathcal{U}')$.

Therefore 
\begin{align}\label{equineq}
M(f_{1,\infty}^k,K,s,\mathcal{U},\alpha,N)&=\inf_{\mathcal{G}_{f_{1,\infty}^k}}\left\{ \sum_{\mathbf{U}\in\mathcal{G}_{f_{1,\infty}^k}}e^{-\alpha\cdot m(\mathbf{U})^s} \right\}\nonumber\\
&\geq \inf_{\mathcal{G}_{f_{1,\infty}}}\left\{\sum_{\mathbf{V}\in \mathcal{G}_{f_{1,\infty}}}e^{-\alpha\cdot\frac{m(\mathbf{V})^s}{k^s}} \right\}\nonumber\\
&=M(f_{1,\infty},K,s,\mathcal{U}',\frac{\alpha}{k^s},kN).
\end{align}
Similar to the proof of ~\Cref{T:inequality}, then $m(f_{1,\infty}^k,K,s,\mathcal{U},\alpha)\geq m(f_{1,\infty},K,s,\mathcal{U}',\alpha/k^s)$. Hence 
\[
k^s\cdot D(f_{1,\infty},K,s,\mathcal{U}')\leq D(f_{1,\infty}^k,K,s,\mathcal{U}),
\] 
and $D(f_{1,\infty}^k,K,s)\geq k^s D(f_{1,\infty},K,s)$ by taking the supremum of all finite open cover $\mathcal{U}$ of $X$. Following a similar proof of ~\Cref{T:power}, we have $D(f_{1,\infty}^k,K)\geq D(f_{1,\infty},K)$. Now combining with ~\Cref{T:power}, then $D(f_{1,\infty}^k,K)= D(f_{1,\infty},K)$.
\end{proof}



\begin{theorem}\label{T:periodic}
Let $(X,f_{1,\infty})$ be an NDS, and $K$ be a nonempty subset of $X$. If the sequence of maps $f_{1,\infty}$ has period $k$. Then 
\[
D(f_{1,\infty}^k,K)= D(f_{1,\infty},K).
\]
\end{theorem}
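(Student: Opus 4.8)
The plan is to mimic the strategy already used for the periodic case of the classical entropy dimension, combining the general inequality of \Cref{T:power} with a reverse inequality that exploits periodicity in place of equicontinuity. By \Cref{T:power} we already have $D(f_{1,\infty}^k,K)\leq D(f_{1,\infty},K)$, so it suffices to prove the reverse inequality, and for that it is enough to show that for every $s>0$ and some (conveniently chosen) open cover of $X$ we have $D(f_{1,\infty}^k,K,s)\geq k^s D(f_{1,\infty},K,s)$, since then the usual critical-value argument on the parameter $s$ closes the gap.

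The key observation is that when $f_{1,\infty}$ has period $k$, the cover $\mathcal{U}'$ built in the proof of \Cref{T:powerrule} to absorb the equicontinuity moduli can be replaced by a genuinely finite, $n$-independent object: because there are only finitely many maps $f_1,\dots,f_k$ (and all later maps repeat them), for a given finite open cover $\mathcal{U}$ of $X$ one may take $\mathcal{U}^* = \bigvee_{j=0}^{k-1} f_1^{-j}(\mathcal{U}) = \mathcal{U}_1^k$, or alternatively the cover consisting of all images $f_i^{\,j}(U)$ for $U\in\mathcal{U}$, $1\le j\le k-1$ and $1\le i\le k$. First I would set up, exactly as in \eqref{g1j} and \eqref{reverse}, the identities $f_1^{tk}=(f_1^k)^t$ and $f_1^{tk+i}=f_1^{i}\circ f_1^{tk}$ valid under periodicity. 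Then, given a string $\mathbf{U}=(U_0,\dots,U_{n-1})$ over $\mathcal{U}$ with $X_{f_{1,\infty}^k}(\mathbf{U})=\{x: f_1^{jk}(x)\in U_j,\ 0\le j\le n-1\}$, I would build the companion string $\mathbf{V}$ of length $kn$ over $\mathcal{U}^*$ (or over the image cover) whose blocks encode the intermediate constraints $f_1^{jk+i}(x)=f_1^{i}(f_1^{jk}(x))\in f_1^{i}(U_j)$; periodicity guarantees these sets lie in the single finite cover $\mathcal{U}^*$ regardless of $j$ and $n$. This yields $X_{f_{1,\infty}^k}(\mathbf{U})\subseteq X_{f_{1,\infty}}(\mathbf{V})$, so a cover $\mathcal{G}_{f_{1,\infty}^k}\subseteq\bigcup_{n\ge N}S_{n,f_{1,\infty}^k}(\mathcal{U})$ of $K$ produces a cover $\mathcal{G}_{f_{1,\infty}}\subseteq\bigcup_{n\ge kN}S_{n,f_{1,\infty}}(\mathcal{U}^*)$ of $K$ with $m(\mathbf{V})=k\,m(\mathbf{U})$.

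From the weight bookkeeping $m(\mathbf{V})^s=k^s m(\mathbf{U})^s$ I would then get, exactly as in \eqref{equineq},
\[
M(f_{1,\infty}^k,K,s,\mathcal{U},\alpha,N)\ \ge\ M\!\left(f_{1,\infty},K,s,\mathcal{U}^*,\tfrac{\alpha}{k^s},kN\right),
\]
hence on letting $N\to\infty$, $m(f_{1,\infty}^k,K,s,\mathcal{U},\alpha)\ge m(f_{1,\infty},K,s,\mathcal{U}^*,\alpha/k^s)$, which gives $k^s D(f_{1,\infty},K,s,\mathcal{U}^*)\le D(f_{1,\infty}^k,K,s,\mathcal{U})$. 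Taking the supremum over all finite open covers $\mathcal{U}$ of $X$ (and noting $\mathcal{U}\preceq\mathcal{U}^*$, so the supremum over the $\mathcal{U}^*$'s recovers $D(f_{1,\infty},K,s)$) yields $D(f_{1,\infty}^k,K,s)\ge k^s D(f_{1,\infty},K,s)$. Finally, for any $s_0>D(f_{1,\infty}^k,K)$ we have $D(f_{1,\infty}^k,K,s_0)=0$, which by the displayed inequality forces $D(f_{1,\infty},K,s_0)=0$, so $s_0\ge D(f_{1,\infty},K)$; combined with \Cref{T:power} this gives $D(f_{1,\infty}^k,K)=D(f_{1,\infty},K)$.

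The main obstacle I anticipate is purely combinatorial rather than conceptual: making sure the companion cover $\mathcal{U}^*$ is chosen so that \emph{every} intermediate set $f_1^{i}(U_j)$ actually appears as a member of it (this is where period $k$ is essential — without it one would need the equicontinuity trick of \Cref{T:powerrule}), and keeping the indices straight when a block of $\mathbf{V}$ spans the junction between two consecutive periods. One must also confirm that no $f_{1,\infty}$-invariance hypothesis on $K$ is needed here, unlike in \Cref{T:inequality}: indeed it is not, because the inclusion $X_{f_{1,\infty}^k}(\mathbf{U})\subseteq X_{f_{1,\infty}}(\mathbf{V})$ is a direct set inclusion in $X$ and the covering property of $K$ is inherited automatically, so no preimage of $K$ is taken. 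These are all routine once the indexing is fixed, so I would allot most of the write-up to carefully defining $\mathbf{V}$ and $\mathcal{U}^*$ and checking the inclusion.
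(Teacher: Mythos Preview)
Your overall plan matches the paper's exactly: invoke \Cref{T:power} for one inequality, and for the reverse establish $D(f_{1,\infty}^k,K,s)\geq k^s D(f_{1,\infty},K,s)$ via the refined cover $\mathcal{U}_1^k$ together with the periodicity identities \eqref{reverse}. However, there is a real obstruction in your construction, and it is not merely combinatorial. You take strings $\mathbf{U}$ over the coarse cover $\mathcal{U}$ for the iterated system $f_{1,\infty}^k$ and try to build companion strings $\mathbf{V}$ over $\mathcal{U}^*$ for $f_{1,\infty}$ whose entries at the intermediate times $jk+i$ contain the forward images $f_1^{i}(U_j)$. But forward images of open sets under continuous maps need not be open, so your ``image cover'' is not an open cover of $X$; and the alternative $\mathcal{U}^*=\mathcal{U}_1^k$ fails for the opposite reason, since its members are \emph{intersections of preimages} and hence smaller than the $U_j$, so no single member of $\mathcal{U}_1^k$ can contain $f_1^{i}(U_j)$ independently of the point $x$. (Padding the intermediate slots with $X$ is not a way out either: adding $X$ to the cover makes the constant string $(X,\dots,X)$ admissible and forces $D(f_{1,\infty},K,s,\mathcal{U}^*)\le 0$.)

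The paper resolves this by reversing the roles of the two covers: it uses $\mathcal{V}=\mathcal{U}_1^k$ as the cover for the \emph{iterated} system $f_{1,\infty}^k$ and the coarse $\mathcal{U}$ for $f_{1,\infty}$. Each $V_{i_t}\in\mathcal{V}$ then comes with a representation $V_{i_t}=\bigcap_{l=0}^{k-1}f_1^{-l}(U_{i_{tk+l}})$, and periodicity gives the exact equality $X_{f_{1,\infty}^k}(\mathbf{V})=X_{f_{1,\infty}}(\mathbf{U})$ for the unfolded string $\mathbf{U}=(U_{i_0},\dots,U_{i_{nk-1}})$ over $\mathcal{U}$. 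This yields $M(f_{1,\infty}^k,K,s,\mathcal{V},\alpha,N)\ge M(f_{1,\infty},K,s,\mathcal{U},\alpha/k^s,kN)$ with no openness issue, and taking the supremum over $\mathcal{U}$ finishes. So your argument is one swap away from the paper's; once you place the refined cover on the $f_{1,\infty}^k$ side, your weight bookkeeping and final critical-value passage go through verbatim.
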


\begin{proof}
By ~\Cref{T:power}, we need only to prove $D(f_{1,\infty}^k,K,s)\geq k^s D(f_{1,\infty},K,s)$ holds for any $s>0$. Let $\mathbf{V}=(V_{i_0},V_{i_1},\dots,V_{i_{n-1}})\in \mathcal{G}_{f_{1,\infty}^k}$ with length $m(\mathbf{V})=n$ and $K\subseteq \bigcup_{\mathbf{V}\in \mathcal{G}_{f_{1,\infty}^k}} X_{f_{1,\infty}^k}(\mathbf{V})$. For any open cover $\mathcal{U}$ of $X$, then $\mathcal{V}:=\bigvee_{j=0}^{k-1}f_1^{-j}(\mathcal{U})$ is also an open cover of $X$. For any $U_{i_j}\in\mathcal{U}$, we put
\begin{align}\label{vit}
V_{i_t}=U_{i_{tk}}\cap f_1^{-1}(U_{i_{tk+1}})\cap\dots\cap f_1^{-(k-1)}(U_{i_{(t+1)k-1}}),\quad 0\leq t\leq n-1.
\end{align}
It is clear that $\mathcal{G}_{f_{1,\infty}^k}\subseteq \bigcup_{n\geq N}S_{N,f_{1,\infty}^k}(\mathcal{U})$. Now define 
\[\mathbf{U}=(U_{i_0},U_{i_1},\dots,U_{i_{k-1}},U_{i_k},\dots,U_{i_{2k-1}},\dots,U_{i_{(n-1)k}},\dots,U_{i_{nk-1}}).\]
Then 
\begin{equation}\label{kmv}
m(\mathbf{U})=kn=k\cdot m(\mathbf{V}),
\end{equation}
and $\mathbf{U}\in \bigcup_{n\geq kN}S_{n,f_{1,\infty}}(\mathcal{U})$. We denote the set of all such $\mathbf{U}$ by $\mathcal{G}_{f_{1,\infty}}$.

Notice that
\begin{align*}
X_{f_{1,\infty}}(\mathbf{U})&=\{x\in X: f_1^j(x)\in U_{i_j}, 0\leq j\leq nk-1\}\\
	&=\left[U_{i_0}\cap f_1^{-1}(U_{i_1})\cap\dots\cap f_1^{-(k-1)}(U_{i_{k-1}}) \right]\\
	&\quad \bigcap \left[ f_1^{-k}(U_{i_k})\cap f_1^{-(k+1)}(U_{i_{k+1}})\cap \dots\cap f_1^{-(2k-1)}(U_{i_{2k-1}}) \right]\bigcap\dots \\
	&\quad \bigcap \left[f_1^{-(n-1)k}(U_{i_{(n-1)k}})\cap\dots\cap f_1^{-(nk-1)}(U_{i_{nk-1}}) \right]\\
	&=\left[U_{i_0}\cap f_1^{-1}(U_{i_1})\cap\dots\cap f_1^{-(k-1)}(U_{i_{k-1}}) \right]\\
	&\quad \bigcap f_1^{-k}\left(U_{i_k}\cap f_1^{-1}(U_{i_{k+1}})\cap\dots\cap f_1^{-(k-1)}(U_{i_{2k-1}}) \right)\bigcap\dots\\
	&\quad \bigcap f_1^{-(n-1)k}\left(U_{i_{(n-1)k}}\cap f_1^{-1}(U_{i_{(n-1)k}+1})\cap\dots\cap f_1^{-(k-1)}(U_{i_{nk-1}})\right)\quad (\text{By} ~\eqref{reverse})\\
	&=V_{i_0}\cap f_1^{-k}(V_{i_1})\cap\dots\cap f_1^{-(n-1)k}(V_{i_{n-1}})\quad (\text{By} ~\eqref{vit})\\
	&=X_{f_{1,\infty}^k}(\mathbf{V}).
\end{align*}
Then the collection $\mathcal{G}_{f_{1,\infty}}$ also covers $K$. It follows that 
\begin{align*}
	M(f_{1,\infty}^k,K,s,\mathcal{V},\alpha,N)&=\inf_{\mathcal{G}_{f_{1,\infty}^k}}\left\{ \sum_{\mathbf{V}\in\mathcal{G}_{f_{1,\infty}^k}}e^{-\alpha\cdot m(\mathbf{V})^s} \right\}\\
	&\geq \inf_{\mathcal{G}_{f_{1,\infty}}}\left\{\sum_{\mathbf{U}\in \mathcal{G}_{f_{1,\infty}}}e^{-\frac{\alpha}{k^s}m(\mathbf{U})^s } \right\}\quad (\text{By} ~\eqref{kmv})\\
	&\geq M(f_{1,\infty},K,s,\mathcal{U},\frac{\alpha}{k^s},kN).
\end{align*}
Notice that the last inequality is similar to ~\eqref{equineq}, so following a similar discussion then we complete the proof.
\end{proof}
	
\begin{corollary}
Let $(X,d)$ be a compact metric space, and $f,g$ be two continuous selfmaps of $X$. Then for any $f,g$-invariant subset $K\subseteq X$, 
\[
D(f\circ g,K)=D(g\circ f,K).
\] 
\end{corollary}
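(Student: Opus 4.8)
The plan is to realize both $f\circ g$ and $g\circ f$ as the second iterate of a single periodic nonautonomous system, and then to combine the power rule of \Cref{T:periodic} with the monotonicity of \Cref{T:inequality}. Concretely, I would let $f_{1,\infty}$ be the period-$2$ sequence $\{g,f,g,f,\dots\}$, that is, $f_{2i-1}=g$ and $f_{2i}=f$ for all $i\geq 1$. Since $K$ is simultaneously $f$-invariant and $g$-invariant, it is $f_{1,\infty}$-invariant in the sense required by \Cref{T:inequality}; \Cref{T:periodic} needs no invariance hypothesis at all.

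First I would identify the squares of the relevant sequences. With the convention $f_i^j=f_{i+(j-1)}\circ\dots\circ f_i$, one has $f_1^2=f_2\circ f_1=f\circ g$, and by periodicity $f_{2i+1}^2=f_{2i+2}\circ f_{2i+1}=f\circ g$ for every $i\geq 0$; hence $f_{1,\infty}^2=\{f\circ g,f\circ g,\dots\}$ is precisely the autonomous system $(X,f\circ g)$. Likewise, the shifted sequence $f_{2,\infty}=\{f,g,f,g,\dots\}$ is again of period $2$ and satisfies $f_{2,\infty}^2=\{g\circ f,g\circ f,\dots\}=(X,g\circ f)$. Applying \Cref{T:periodic} to $f_{1,\infty}$ and to $f_{2,\infty}$ then gives
\[
D(f\circ g,K)=D(f_{1,\infty}^2,K)=D(f_{1,\infty},K),\qquad D(g\circ f,K)=D(f_{2,\infty}^2,K)=D(f_{2,\infty},K).
\]

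Next I would prove $D(f_{1,\infty},K)=D(f_{2,\infty},K)$. By \Cref{T:inequality}, the quantity $D(f_{i,\infty},K)$ is nondecreasing in $i$, so in particular $D(f_{1,\infty},K)\leq D(f_{2,\infty},K)\leq D(f_{3,\infty},K)$. But the period-$2$ structure of $f_{1,\infty}$ gives $f_{3,\infty}=f_{1,\infty}$, whence $D(f_{3,\infty},K)=D(f_{1,\infty},K)$ and equality is forced throughout. Combining this with the two displayed identities above yields $D(f\circ g,K)=D(g\circ f,K)$.

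I do not expect a genuine obstacle here: the only delicate point is bookkeeping the composition order in $f_i^j$ so as to match $f\circ g$ with $f_{1,\infty}^2$ and $g\circ f$ with $f_{2,\infty}^2$; once the sequences are set up correctly, the argument is a short chain built from the already-established power rule (\Cref{T:periodic}) and monotonicity (\Cref{T:inequality}). One should merely record that the use of \Cref{T:inequality} is legitimate because its invariance requirement is exactly the hypothesis that $K$ is $f,g$-invariant.
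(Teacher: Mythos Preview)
Your proposal is correct and follows essentially the same approach as the paper: use \Cref{T:inequality} together with periodicity to obtain $D(\{g,f,g,f,\dots\},K)=D(\{f,g,f,g,\dots\},K)$, and then invoke \Cref{T:periodic} to identify each side with $D(f\circ g,K)$ and $D(g\circ f,K)$ respectively. Your careful tracking of the composition order is accurate and matches the paper's intent.
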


\begin{proof}
By ~\Cref{T:inequality}, we have
\[
D(\{f,g,f,g,\dots\},K)\leq D(\{g,f,g,f,\dots\},K)\leq D(\{f,g,f,g,\dots\},K).
\]
This implies that 
\[
D(\{f,g,f,g,\dots\},K)=D(\{g,f,g,f,\dots\},K).
\] 
Then by ~\Cref{T:periodic}, 
\begin{align*}
D(f\circ g,K)&=D(\{f\circ g,f\circ g,\dots\},K)\\
	&=D(\{f,g,f,g,\dots\},K)\\
	&=D(\{g,f,g,f,\dots\},K)\\
	&=D(\{g\circ f,g\circ f,\dots\},K)\\
	&=D(g\circ f,K).
\end{align*}
\end{proof}

\begin{definition}\cite{KS1996}
Let $(X,d)$ and $(Y,\rho)$ be two compact metric spaces, $f_{1,\infty}$ be a sequence of continuous selfmaps of $X$, and $g_{1,\infty}$ be a sequence of continuous selfmaps of $Y$. Then we have two NDSs $(X,f_{1,\infty})$ and $(Y,g_{1,\infty})$. Suppose that $\pi_{1,\infty}$ is a sequence of equicontinuous surjective maps from $X$ to $Y$. If $\pi_{i+1}\circ f_i=g_i\circ \pi_i$ for all $i\geq 1$, then we say $\pi_{1,\infty}$ is a \emph{topologically equisemiconjugacy} between $f_{1,\infty}$ and $g_{1,\infty}$ and $(X,f_{1,\infty})$ is equisemiconjugates with $(Y,g_{1,\infty})$. 
If $\pi_{1,\infty}$ is further an equicontinuous sequence of homeomorphisms such that $\pi^{-1}_{1,\infty}=\{\pi_i^{-1}\}_{i=1}^{\infty}$ of inverse homeomorphisms is also equicontinuous, then we say $\pi_{1,\infty}$ \emph{topologically equiconjugates $(X,f_{1,\infty})$ and $(Y,g_{1,\infty})$}.
\end{definition}

Kolyada and Snoha ~\cite{KS1996} prove that the topological entropy of sequence of continuous maps is invariant up to equiconjugacy. Li et al. ~\cite{LZW2019} show that the classical upper entropy dimension $\overline{D}_K(f_{1,\infty})$ is invariant under equiconjugacy. The following ~\Cref{equisemiconjuates} further shows that the Pesin $s$-topological entropy $D(f_{1,\infty},X,s)$ and the Pesin topological entropy dimension $D(f_{1,\infty},X)$ are also invariant under equiconjugacy.

\begin{theorem}\label{equisemiconjuates}
Let $(X, d)$ and $(Y,\rho)$ be two compact metric spaces, $f_{1,\infty}$ be a sequence of continuous selfmaps of $X$, $g_{1,\infty}$ be a sequence of continuous selfmaps of $Y$, and $K$ be a nonempty subset of $X$. If $(X,f_{1,\infty})$ is equisemiconjuates with $(Y,g_{1,\infty})$, then
\[
D(f_{1,\infty},K)\geq D(g_{1,\infty},\pi_1(K)).
\]
If $(X,f_{1,\infty})$ is equiconjugates with $(Y,g_{1,\infty})$, then we further have 
\[
D(f_{1,\infty},K)= D(g_{1,\infty},\pi_1(K)).
\] 
Particularly, $D(f_{1,\infty},X)$ is invariant up to equiconjugacy, i.e., $D(f_{1,\infty},X)= D(g_{1,\infty},Y)$.
\end{theorem}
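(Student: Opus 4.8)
The plan is to show the inequality $D(f_{1,\infty},K)\geq D(g_{1,\infty},\pi_1(K))$ directly at the level of the set functions $M(\cdot,\cdot,s,\mathcal{U},\alpha,N)$, and then obtain the reverse inequality by symmetry once we are in the equiconjugate setting. The mechanism should be exactly as in the proofs of \Cref{T:inequality} and \Cref{T:powerrule}: pull back strings along the semiconjugacy. First I would fix a finite open cover $\mathcal{U}$ of $Y$ and, using equicontinuity of $\pi_{1,\infty}$, produce a finite open cover $\mathcal{W}$ of $X$ that refines each $\pi_i^{-1}(\mathcal{U})$ simultaneously; here one invokes the uniform equicontinuity to get a single modulus $\delta(\varepsilon)$ controlling all the $\pi_i$ at once, so that a cover of $X$ of small enough diameter maps into $\mathcal{U}$-sets under every $\pi_i$. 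The key algebraic identity is that the intertwining relation $\pi_{i+1}\circ f_i=g_i\circ\pi_i$ iterates to $\pi_{j+1}\circ f_1^{\,j}=g_1^{\,j}\circ\pi_1$ for all $j\geq 0$, which gives, for a string $\mathbf{U}=(U_{i_0},\dots,U_{i_{m-1}})\in S(\mathcal{U})$ and an associated string $\mathbf{W}$ over $\mathcal{W}$ chosen so that $\pi_{j+1}\big((W)_j\big)\subseteq U_{i_j}$, the inclusion $X_{f_{1,\infty}}(\mathbf{W})\subseteq \pi_1^{-1}\big(Y_{g_{1,\infty}}(\mathbf{U})\big)$.

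Granting this, if $\mathcal{G}$ is a countable family of strings over $\mathcal{U}$ with all lengths $\geq N$ that covers $\pi_1(K)$, then the corresponding family $\mathcal{G}'$ of strings over $\mathcal{W}$ covers $\pi_1^{-1}(\pi_1(K))\supseteq K$, has the same lengths, and satisfies $\sum_{\mathbf{W}\in\mathcal{G}'}e^{-\alpha m(\mathbf{W})^s}=\sum_{\mathbf{U}\in\mathcal{G}}e^{-\alpha m(\mathbf{U})^s}$ (the length bookkeeping is trivial here, in contrast to \Cref{T:powerrule} where a factor $k^s$ appeared). Taking infima gives
\[
M(f_{1,\infty},K,s,\mathcal{W},\alpha,N)\leq M(g_{1,\infty},\pi_1(K),s,\mathcal{U},\alpha,N),
\]
hence $m(f_{1,\infty},K,s,\mathcal{W},\alpha)\leq m(g_{1,\infty},\pi_1(K),s,\mathcal{U},\alpha)$ after $N\to\infty$, hence $D(f_{1,\infty},K,s,\mathcal{W})\geq D(g_{1,\infty},\pi_1(K),s,\mathcal{U})$, hence $D(f_{1,\infty},K,s)\geq D(g_{1,\infty},\pi_1(K),s,\mathcal{U})$, and finally $D(f_{1,\infty},K,s)\geq D(g_{1,\infty},\pi_1(K),s)$ by taking the supremum over $\mathcal{U}$. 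The passage from the $s$-entropies to the entropy dimension is then the routine critical-value argument used throughout: if $s_0>D(f_{1,\infty},K)$ then $D(f_{1,\infty},K,s_0)=0$ forces $D(g_{1,\infty},\pi_1(K),s_0)=0$, so $s_0\geq D(g_{1,\infty},\pi_1(K))$, giving the claimed inequality.

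For the equiconjugate case, $\pi_{1,\infty}^{-1}$ is itself an equicontinuous semiconjugacy from $(Y,g_{1,\infty})$ to $(X,f_{1,\infty})$ intertwining $g_i$ and $f_i$, so applying the inequality already proved to $K'=\pi_1(K)\subseteq Y$ yields $D(g_{1,\infty},\pi_1(K))\geq D(f_{1,\infty},\pi_1^{-1}(\pi_1(K)))=D(f_{1,\infty},K)$, and the two inequalities combine to the asserted equality; specializing to $K=X$ (so $\pi_1(K)=Y$ by surjectivity) gives $D(f_{1,\infty},X)=D(g_{1,\infty},Y)$. I expect the main obstacle to be the first step — producing the cover $\mathcal{W}$ of $X$ with $\pi_{i+1}\big((W)_j\big)\subseteq U_{i_j}$ uniformly in $i$ and $j$ along a string, and in the semiconjugate (non-injective) case verifying carefully that $\mathcal{G}'$ still \emph{covers} $K$ (this is where $\pi_1^{-1}(\pi_1(K))\supseteq K$ is used, and why no invariance hypothesis on $K$ is needed); the rest is the same template as \Cref{T:inequality,T:powerrule}, so after setting up $\mathcal{W}$ I would largely refer back to those proofs rather than repeat the estimates.
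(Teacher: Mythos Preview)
Your overall architecture is the right one and matches the paper's: use the intertwining $\pi_{j+1}\circ f_1^{\,j}=g_1^{\,j}\circ\pi_1$ to transfer strings between the two systems, compare the set functions $M(\cdot)$, and then run the critical--value argument. Your construction of a single cover $\mathcal{W}$ of $X$ refining every $\pi_i^{-1}(\mathcal{U})$ via equicontinuity is a clean device (arguably cleaner than the paper's, which pulls back along the position--dependent maps $\pi_{j+1}^{-1}$). The equiconjugate step and the specialization $K=X$ are fine.

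There is, however, a genuine direction error in the core transfer, and it propagates. From $\pi_{j+1}(W_j)\subseteq U_{i_j}$ you correctly get
\[
X_{f_{1,\infty}}(\mathbf{W})\subseteq \pi_1^{-1}\bigl(Y_{g_{1,\infty}}(\mathbf{U})\bigr),
\]
but this inclusion says $X(\mathbf{W})$ lies \emph{inside} the preimage, not that it covers it. Picking \emph{one} $\mathbf{W}$ per $\mathbf{U}$ therefore does not yield a family $\mathcal{G}'$ covering $\pi_1^{-1}(\pi_1(K))$: for a point $x\in K$ with $\pi_1(x)\in Y(\mathbf{U})$ you only know $f_1^{\,j}(x)\in\pi_{j+1}^{-1}(U_{i_j})$, and there is no reason this particular $W_j\in\mathcal{W}$ contains $f_1^{\,j}(x)$. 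So your covering claim for $\mathcal{G}'$ fails, and the asserted inequality $M(f_{1,\infty},K,s,\mathcal{W},\alpha,N)\leq M(g_{1,\infty},\pi_1(K),s,\mathcal{U},\alpha,N)$ is unjustified. Separately, the step ``$m(f,\ldots)\leq m(g,\ldots)$ hence $D(f,\ldots)\geq D(g,\ldots)$'' is backwards: smaller $m$ means a \emph{smaller} critical value.

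The fix is to run the transfer in the opposite direction, and your cover $\mathcal{W}$ is exactly what makes this work. Start from an arbitrary family $\mathcal{G}'$ of strings over $\mathcal{W}$ with lengths $\geq N$ covering $K$. For each $\mathbf{W}=(W_0,\dots,W_{m-1})\in\mathcal{G}'$ choose $U_j\in\mathcal{U}$ with $\pi_{j+1}(W_j)\subseteq U_j$ (possible by construction of $\mathcal{W}$) and set $\mathbf{U}=(U_0,\dots,U_{m-1})$. The intertwining gives $\pi_1\bigl(X_{f_{1,\infty}}(\mathbf{W})\bigr)\subseteq Y_{g_{1,\infty}}(\mathbf{U})$, so the family $\mathcal{G}=\{\mathbf{U}\}$ covers $\pi_1(K)$ with the same lengths. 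Hence
\[
M(g_{1,\infty},\pi_1(K),s,\mathcal{U},\alpha,N)\leq M(f_{1,\infty},K,s,\mathcal{W},\alpha,N),
\]
whence $D(g_{1,\infty},\pi_1(K),s,\mathcal{U})\leq D(f_{1,\infty},K,s,\mathcal{W})\leq D(f_{1,\infty},K,s)$, and taking the supremum over $\mathcal{U}$ gives the desired $D(g_{1,\infty},\pi_1(K),s)\leq D(f_{1,\infty},K,s)$. The rest of your write--up then goes through unchanged.
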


\begin{proof}
To prove the first inequality, we need only to show that for any $s>0$, we have $D(f_{1,\infty},K,s)\geq D(g_{1,\infty},\pi_1(K),s)$. In other words, we need to find some appropriate $\mathcal{U},\mathcal{V},s,\alpha$ and $N$ such that $M(g_{1,\infty},\pi_1(K),s,\mathcal{U},\alpha,N)\leq M(f_{1,\infty},K,s,\mathcal{V},\alpha,N)$. 

Note that $\pi_{1,\infty}$ is topologically equisemiconjuates $f_{1,\infty}$ and $g_{1,\infty}$, then $\pi_{j+1}\circ f_j=g_j\circ \pi_j$.
\[
\begin{tikzcd}
X \arrow{r}{f_1} \arrow{d}{\pi_1} & X \arrow{r}{\dots}\arrow{d}{\pi_2} & X\arrow{d}{\pi_j} \arrow{r}{f_j}&X \arrow{d}{\pi_{j+1}} \\
Y \arrow[swap]{r}{g_1}& Y\arrow[swap]{r}{\dots}& Y\arrow[swap]{r}{g_j}&Y
\end{tikzcd}
\]
By the commutative diagram, 
\begin{align}\label{fpi}
\pi_{j+1}\circ f_1^j=g_1^j\circ \pi_1, \quad \pi_1^{-1}\circ g_1^{-j}=f_1^{-j}\circ \pi_{j+1}^{-1},\qquad \forall j\geq 1.
\end{align}

Let $\mathcal{U}$ be a finite open cover of $Y$. For any $\varepsilon>0$ and $U\in\mathcal{U}$, set
\[
\mathcal{G}_{g_{1,\infty}}:=\{\mathbf{U}=(U_{i_0},U_{i_1},\dots,U_{i_{n-1}}): \rho(U_{i_j})<\varepsilon, U_{i_j}\in\mathcal{U}, 0\leq j\leq n-1\}.
\]
By the definition of $M(g_{1,\infty},\pi_1(K),s,\mathcal{U},\alpha,N)$, for any $\mathbf{U}=(U_{i_0},U_{i_1},\dots,U_{i_{n-1}})\in \mathcal{G}_{g_{1,\infty}}$, $m(\mathbf{U})\geq N$ and $\mathcal{G}_{g_{1,\infty}}$ covers $\pi_1(K)$, then
\begin{align*}
\pi_1(K)&\subseteq \bigcup_{\mathbf{U}\in\mathcal{G}_{g_{1,\infty}}}Y_{g_{1,\infty}}(\mathbf{U})\\
	&=\bigcup_{\mathbf{U}\in\mathcal{G}_{g_{1,\infty}}}\left\{y\in Y: g_1^j(y)\in U_{i_j},0\leq j\leq n-1\right\}\\
	&=\bigcup_{\mathbf{U}\in\mathcal{G}_{g_{1,\infty}}}\left(U_{i_0}\cap g_1^{-1}(U_{i_1})\cap\dots\cap g_1^{-(n-1)}(U_{i_{n-1}}) \right).
\end{align*}
Thus 
\begin{align*}
K&\subseteq \bigcup_{\mathbf{U}\in\mathcal{G}_{g_{1,\infty}}} \pi_1^{-1}\left(U_{i_0}\cap g_1^{-1}(U_{i_1})\cap\dots\cap g_1^{-(n-1)}(U_{i_{n-1}}) \right)\\
	&=\bigcup_{\mathbf{U}\in\mathcal{G}_{g_{1,\infty}}} \pi_1^{-1}(U_{i_0})\cap \pi_1^{-1}(g_1^{-1}(U_{i_1}))\cap\dots\cap \pi_1^{-1}(g_1^{-(n-1)}(U_{i_{n-1}}))\\
	&=\bigcup_{\mathbf{U}\in\mathcal{G}_{g_{1,\infty}}} \pi_1^{-1}(U_{i_0})\cap f_1^{-1}(\pi_2^{-1}(U_{i_1}))\cap\dots\cap f_1^{-(n-1)}(\pi_{n}^{-1}(U_{i_{n-1}}))\quad (\text{by} ~\eqref{fpi}).
\end{align*}

For $\mathbf{U}=(U_{i_0},U_{i_1},\dots,U_{i_{n-1}})\in \mathcal{G}_{g_{1,\infty}}$, we choose $U_{i_j}\in\mathbf{U}$ with $d(\pi_{j+1}^{-1}(U_{i_j}))<\delta$ for $0\leq j\leq n-1$. Then for such collection of $U_{i_j}$, let $V_{i_j}:=\pi_{j+1}^{-1}(U_{i_j})\in\mathcal{V}$ and $\mathbf{V}:=(V_{i_0},V_{i_1},\dots,V_{i_{n-1}})\in \mathcal{G}_{f_{1,\infty}}$. Then $m(\mathbf{U})=m(\mathbf{V})$. Note that $\pi_{1,\infty}$ is equicontinuous, then $d(V_{i_j})<\delta$ implies that $\rho(\pi_{j+1}(V_{i_j})=\rho\left(\pi_{j+1}(\pi_{j+1}^{-1}(U_{i_j}))\right)=\rho(U_{i_j})<\varepsilon$.

Therefore, we further have
\begin{align*}
K&\subseteq \bigcup_{\mathbf{U}\in\mathcal{G}_{g_{1,\infty}}} V_{i_0}\cap f_1^{-1}(V_{i_1})\cap\dots\cap f_1^{-(n-1)}(V_{i_{n-1}})\\
	&\subseteq \bigcup_{\mathbf{V}\in\mathcal{G}_{f_{1,\infty}}} X_{f_{1,\infty}}(\mathbf{V}).
\end{align*}
Hence $\mathcal{G}_{f_{1,\infty}}$ covers $K$ and 
\begin{align*}
M(g_{1,\infty},\pi_1(K),s,\mathcal{U},\alpha,N)&=\inf_{\mathcal{G}_{g_{1,\infty}}}\left\{\sum_{\mathbf{U}\in \mathcal{G}_{g_{1,\infty}}}e^{-\alpha m(\mathbf{U})^s} \right\}\\
	&\leq \inf_{\mathcal{G}_{f_{1,\infty}}}\left\{\sum_{\mathbf{V}\in \mathcal{G}_{f_{1,\infty}}}e^{-\alpha m(\mathbf{V})^s} \right\}\\
	&=M(f_{1,\infty},K,s,\mathcal{V},\alpha,N).
\end{align*}
A similar discussion implies that $D(f_{1,\infty},K,s)\geq D(g_{1,\infty},\pi_1(K),s)$ and therefore $D(f_{1,\infty},K)\geq D(g_{1,\infty},\pi_1(K))$ follows.

In the case $\pi_{1,\infty}$ is homeomorphism, then $\pi_{j+1}^{-1}\circ g_j=f_j\circ \pi_{j}^{-1}$, it follows that 
\[
D(g_{1,\infty},\pi_1(K),s)\geq D(f_{1,\infty},\pi_1^{-1}(\pi_1(K)),s)=D(f_{1,\infty},K,s).
\] 
Thus we get $D(f_{1,\infty},K,s)= D(g_{1,\infty},\pi_1(K),s)$, then $D(f_{1,\infty},K)= D(g_{1,\infty},\pi_1(K))$. 

In particular when $K=X$, since $\pi_i$ is surjective for all $i$, then $\pi_1(X)=Y$. It follows that $D(f_{1,\infty},X,s)= D(g_{1,\infty},Y,s)$ and $D(f_{1,\infty},X)= D(g_{1,\infty},Y)$.
\end{proof}

\section{Relationships among topological entropy dimensions}
\begin{lemma}\label{l:Nrn}
Let $(X,d)$ be a compact metric space, $(X,f_{1,\infty})$ be an NDS, and $K$ be a nonempty $f_{1,\infty}$-invariant subset of $X$. If $\mathcal{U}$ is an open cover of $X$ with Lebesgue number $\delta$, then 
\[
\mathcal{N}(\{X(\mathbf{U}):\mathbf{U}\in\mathcal{G}\})\leq s_n(f_{1,\infty},K,\delta/2),
\]
where the collection of strings $\mathcal{G}$ covers $K$.
\end{lemma}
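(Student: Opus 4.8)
The plan is to derive this from the classical comparison between open covers and $(n,\varepsilon)$-spanning/separated sets, carried out in the nonautonomous Bowen metric $d_n$; the collection $\mathcal{G}$ realizing the bound will be produced explicitly from a maximal separated set, so that $\{X(\mathbf{U}):\mathbf{U}\in\mathcal{G}\}$ is, in effect, a small subfamily of $\mathcal{U}_1^n$ covering $K$.

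First I would fix a maximal $(n,\delta/2)$-separated subset $E=\{x_1,\dots,x_r\}$ of $K$, so that $r\le s_n(f_{1,\infty},K,\delta/2)$ by the definition of $s_n$. By maximality, $E$ is also $(n,\delta/2)$-spanning for $K$: for every $y\in K$ there is $x_i\in E$ with $d(f_1^j(x_i),f_1^j(y))\le\delta/2$ for all $0\le j\le n-1$.

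Next, for each $i\in\{1,\dots,r\}$ and each $0\le j\le n-1$ I would invoke the Lebesgue number $\delta$ of $\mathcal{U}$: the closed ball of radius $\delta/2$ about $f_1^j(x_i)$ has diameter at most $\delta$, hence lies in some $U^i_j\in\mathcal{U}$. Setting $\mathbf{U}^{(i)}=(U^i_0,\dots,U^i_{n-1})\in S_n(\mathcal{U})$, a direct unwinding of the definition of $X(\mathbf{U})$ gives
\[
X(\mathbf{U}^{(i)})=\bigcap_{j=0}^{n-1}f_1^{-j}(U^i_j)\ \supseteq\ \left\{y\in X: d(f_1^j(x_i),f_1^j(y))\le\delta/2\ \text{ for }\ 0\le j\le n-1\right\}.
\]
By the spanning property of $E$ the right-hand sides cover $K$, so $K\subseteq\bigcup_{i=1}^{r}X(\mathbf{U}^{(i)})$. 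Hence $\mathcal{G}:=\{\mathbf{U}^{(1)},\dots,\mathbf{U}^{(r)}\}$ is a collection of length-$n$ strings covering $K$, and since $\{X(\mathbf{U}):\mathbf{U}\in\mathcal{G}\}$ already covers $K$ with at most $r$ sets we obtain $\mathcal{N}(\{X(\mathbf{U}):\mathbf{U}\in\mathcal{G}\})\le r\le s_n(f_{1,\infty},K,\delta/2)$. Since $\{X(\mathbf{U}):\mathbf{U}\in S_n(\mathcal{U})\}=\mathcal{U}_1^n$, this is exactly the refinement version of $\mathcal{N}(\mathcal{U}_1^n|_K)\le s_n(f_{1,\infty},K,\delta/2)$. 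The hypothesis that $K$ is $f_{1,\infty}$-invariant enters only when one passes between quantities computed on $X$ and their restrictions to $K$ (via \Cref{onK}); the counting argument itself does not see it.

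The one delicate point — really the only one — is the bookkeeping around the Lebesgue number: a radius-$\delta/2$ ball has diameter only $\le\delta$, not necessarily strictly smaller, so absorbing it into a member of $\mathcal{U}$ uses the Lebesgue property at the borderline. This is handled by the usual convention that sets of diameter $\le\delta$ lie in a member, or by working with radius $\delta/2-\eta$ and letting $\eta\to0$ after the $\varepsilon\to0$ limits have been taken; one must also take the separated set at exactly scale $\delta/2$ so that the final bound reads $s_n(f_{1,\infty},K,\delta/2)$ rather than $s_n$ at a strictly smaller scale. Once the constants are aligned, unwinding the definitions of $X(\mathbf{U})$, $d_n$ and $\mathcal{N}$ completes the proof.
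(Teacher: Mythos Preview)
Your argument is correct and follows essentially the same route as the paper: both start from an $(n,\delta/2)$-spanning set of $K$ (the paper takes one directly, you obtain it as a maximal separated set), absorb each closed $\delta/2$-ball around $f_1^j(x_i)$ into a member of $\mathcal{U}$ via the Lebesgue number, and assemble the resulting strings into a $\mathcal{G}$ covering $K$ of the desired cardinality. The one noteworthy difference is that the paper inserts the $f_{1,\infty}$-invariance of $K$ to rewrite $f_1^{-j}(\overline{B}(f_1^j(x),\delta/2))\cap K$ as $f_1^{-j}(\overline{B}(f_1^j(x),\delta/2)\cap K)$ and then passes to $\mathcal{U}|_K$, whereas you observe that the counting works already at the level of $\mathcal{U}$ on $X$ and that invariance is not actually needed for this step; your observation is correct, and your version is slightly leaner for that reason.
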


\begin{proof}
The proof is a variation of ~\cite[Theorem 7.7]{Wal2000a}. If $\mathcal{U}$ is an open cover of $X$ with Lebesgue number $\delta$, we let $E\subseteq X$ be an $(n,\delta/2)$-spanning set of $K$. Then for any $y\in K$, there exists $x\in E$ such that $d_n(x,y)\leq \frac{\delta}{2}$, i.e., $y\in f_1^{-j}(\overline{B}(f_1^j(x),\frac{\delta}{2}))$ for $1\leq j\leq n-1$, or equivalently, 
\[
K\subseteq \bigcup_{x\in E}\bigcap_{j=0}^{n-1}f_1^{-j}\left(\overline{B}(f_1^j(x),\frac{\delta}{2}\right).
\]
Note that $K$ is $f_{1,\infty}$-invariant, then for each $0\leq j\leq n-1$, 
\begin{align*}
f_1^{-j}\left(\overline{B}(f_1^j(x),\frac{\delta}{2}\right)&=f_1^{-j}\left(\overline{B}(f_1^j(x),\frac{\delta}{2})\right)\cap K \\
&=f_1^{-j}\left(\overline{B}(f_1^j(x),\frac{\delta}{2})\right)\cap f_1^{-j}(K)\\
&=f_1^{-j}\left(\overline{B}(f_1^j(x),\frac{\delta}{2})\cap K \right)
\end{align*}
Since for each $j$, $\overline{B}(f_1^j(x),\delta/2)\cap K$ is a subset of a member of $\mathcal{U}|_K$. Then given that $\mathcal{G}$ covers $K$, we have $\mathcal{N}(\{X(\mathbf{U}):\mathbf{U}\in\mathcal{G}\})\leq r_n(f_{1,\infty},K,\delta/2)\leq s_n(f_{1,\infty},K,\delta/2)$.
\end{proof}

\begin{theorem}\label{compare}
Let $(X,d)$ be a compact metric space, $(X,f_{1,\infty})$ be an NDS and $K$ be a nonempty $f_{1,\infty}$-invariant subset of $X$. Then
\[
D(f_{1,\infty},K)\leq \underline{D}_K(f_{1,\infty})\leq\overline{D}_K(f_{1,\infty}).
\]
\end{theorem}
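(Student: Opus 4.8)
The statement splits into two inequalities, and I would treat them separately. The second one, $\underline{D}_K(f_{1,\infty})\le\overline{D}_K(f_{1,\infty})$, I would dispose of immediately: for every $s>0$ one has $\underline{h}_K(f_{1,\infty},s)\le\overline{h}_K(f_{1,\infty},s)$, so $\{s>0:\overline{h}_K(f_{1,\infty},s)=0\}\subseteq\{s>0:\underline{h}_K(f_{1,\infty},s)=0\}$, and passing to infima gives the claim. The first inequality is the substantive one, and the plan is to prove it by showing that $D(f_{1,\infty},K,s)=0$ whenever $s>\underline{D}_K(f_{1,\infty})$; this at once forces $D(f_{1,\infty},K)=\inf\{s:D(f_{1,\infty},K,s)=0\}\le\underline{D}_K(f_{1,\infty})$.

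So fix $s>\underline{D}_K(f_{1,\infty})$. Monotonicity of $s\mapsto\underline{h}_K(f_{1,\infty},s)$ together with the definition of $\underline{D}_K$ gives $\underline{h}_K(f_{1,\infty},s)=0$, hence $\underline{h}_{sep}(f_{1,\infty},K,s)=0$ by \Cref{l:hd}. Now I would fix an arbitrary finite open cover $\mathcal{U}$ of $X$ and a Lebesgue number $\delta>0$ for it. Since the inner $\liminf$ in $\underline{h}_{sep}$ is non-decreasing as $\varepsilon\downarrow 0$ with limit $0$, and since $s_n(f_{1,\infty},K,\delta/2)\ge 1$ (a single point of $K$ is $(n,\delta/2)$-separated), one gets $\liminf_{n\to\infty}\frac{1}{n^s}\log s_n(f_{1,\infty},K,\delta/2)=0$. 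The core step then invokes \Cref{l:Nrn}: given $\alpha>0$ and $N\in\mathbb{N}$, choose $n>N$ with $s_n(f_{1,\infty},K,\delta/2)<e^{\alpha n^s/2}$, apply \Cref{l:Nrn} (here is where the $f_{1,\infty}$-invariance of $K$ is used) to obtain a family $\mathcal{G}\subseteq S_n(\mathcal{U})$ of strings of length $n$ that covers $K$ and satisfies $|\mathcal{G}|\le s_n(f_{1,\infty},K,\delta/2)$ after passing to a subcover realising $\mathcal{N}$, and feed $\mathcal{G}$ into the infimum in \eqref{M}. Since $n>N$, this is admissible, and it yields
\[
M(f_{1,\infty},K,s,\mathcal{U},\alpha,N)\le\sum_{\mathbf{U}\in\mathcal{G}}e^{-\alpha m(\mathbf{U})^s}=|\mathcal{G}|\,e^{-\alpha n^s}<e^{-\alpha n^s/2}<e^{-\alpha N^s/2}.
\]
Letting $N\to\infty$ gives $m(f_{1,\infty},K,s,\mathcal{U},\alpha)=0$ for every $\alpha>0$, so $D(f_{1,\infty},K,s,\mathcal{U})\le 0$; taking the supremum over all finite open covers $\mathcal{U}$ of $X$ gives $D(f_{1,\infty},K,s)=0$, which finishes the argument.

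The main obstacle I anticipate is the correct use of \Cref{l:Nrn}: one has to read out of it a genuine covering family of equal-length strings whose cardinality is controlled by the separated-set count $s_n(f_{1,\infty},K,\delta/2)$, keep track of the length constraint $n\ge N$ so that the family is eligible in the definition of $M(f_{1,\infty},K,s,\mathcal{U},\alpha,N)$, and make sure the invariance hypothesis on $K$ is carried through (it is exactly what allows the balls $\overline{B}(f_1^j(x),\delta/2)\cap K$ to be absorbed into members of $\mathcal{U}|_K$ after pulling back). Everything else is bookkeeping with the monotonicity of the various $\liminf$'s in $\varepsilon$ and in $s$; conceptually this is the familiar "Bowen/Pesin dimension $\le$ box-counting dimension" phenomenon, and the $\liminf$ (rather than $\limsup$) on the right is precisely what makes a single well-chosen scale $n$ enough.
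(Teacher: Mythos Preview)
Your proof is correct and follows essentially the same approach as the paper: both use \Cref{l:Nrn} to cover $K$ by at most $s_n(f_{1,\infty},K,\delta/2)$ strings of length $n$ and feed this into the infimum defining $M(f_{1,\infty},K,s,\mathcal{U},\alpha,N)$. The only organisational difference is that the paper first establishes the stronger intermediate inequality $D(f_{1,\infty},K,s)\le\underline{h}_K(f_{1,\infty},s)$ for every $s>0$ and then passes to dimensions, whereas you fix $s>\underline{D}_K(f_{1,\infty})$ at the outset and argue directly that $D(f_{1,\infty},K,s)=0$; your explicit choice of $n>N$ actually makes transparent a step the paper's write-up leaves slightly implicit (namely that one may use length-$n$ strings with $n\ge N$, which is what justifies the $\liminf$ rather than $\limsup$ on the right-hand side).
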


\begin{proof}
The second inequality is obvious, so we need only to prove $D(f_{1,\infty},X,s)\leq \underline{h}_X(f_{1,\infty},s)$ for any $s>0$. Let $\mathcal{U}$ be a finite open cover of $X$ with Lebesgue number $\delta$. Note that by ~\eqref{M}, for any $s>0$ and $\alpha\in \mathbb{R}$,
\[
M(f_{1,\infty},K,s,\mathcal{U},\alpha,N)=\inf\left\{\sum\limits_{\mathbf{U}\in \mathcal{G} } e^{-\alpha m(\mathbf{U})^s}\right\},
\]
where the infimum is taken over all finite or countable collections of strings $\mathcal{G}\subseteq S(\mathcal{U})$ such that $m(\mathbf{U})\geq N$ for all $\mathbf{U}\in\mathcal{G}$ and $\mathcal{G}$ covers $K$. 

Since $K$ is $f_{1,\infty}$-invariant, then by ~\Cref{l:Nrn}, 
\begin{align*}
M(f_{1,\infty},K,s,\mathcal{U},\alpha,N)&\leq e^{-\alpha N^s}\cdot \mathcal{N}(\{X(\mathbf{U}):\mathbf{U}\in\mathcal{G}\})\\
&\leq e^{-\alpha N^s}\cdot s_N(f_{1,\infty},K,\delta/2).
\end{align*}
Now letting $N\to\infty$, it follows that
\[
m(f_{1,\infty},K,s,\mathcal{U},\alpha)\leq \mathop{\lim\inf}_{N\to\infty}\left(e^{-\alpha+\frac{1}{N^s}\log s_N(f_{1,\infty},K,\delta/2)}\right)^{N^s}.
\]
By ~\Cref{l:hd},
\[
\underline{h}_K(f_{1,\infty},s)=\underline{h}_{span}(f_{1,\infty},K,s)=\lim_{\varepsilon\to0}\mathop{\lim\inf}_{N\to\infty}\frac{1}{N^s}\log s_N(f_{1,\infty},K,\varepsilon).
\]
If $\alpha>\mathop{\lim\inf}_{N\to\infty}\frac{1}{N^s}\log s_N(f_{1,\infty},K,\delta/2)$, then $m(f_{1,\infty},K,s,\mathcal{U},\alpha)=0$.
Therefore, $D(f_{1,\infty},K,s,\mathcal{U}) \leq \mathop{\lim\inf}_{N\to\infty}\frac{1}{N^s}\log s_N(f_{1,\infty},K,\delta/2)$. Letting $|\mathcal{U}|\to 0$ then by ~\Cref{P:distance}, we get $D(f_{1,\infty},K,s)\leq \underline{h}_K(f_{1,\infty},s)$ and $D(f_{1,\infty},K)\leq \underline{D}_K(f_{1,\infty})$.
\end{proof}

\begin{corollary}\label{corollaryX}
Let $(X,d)$ be a compact metric space, and $(X,f_{1,\infty})$ be an NDS. Then	
\[
D(f_{1,\infty},X)\leq \underline{D}_X(f_{1,\infty})\leq \overline{D}_X(f_{1,\infty}).
\]
\end{corollary}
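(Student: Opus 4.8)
The plan is to obtain this corollary as the special case $K = X$ of \Cref{compare}. The only thing to check is that $X$ itself qualifies as a nonempty $f_{1,\infty}$-invariant subset of $X$ in the sense required there, i.e.\ that $f_i^{-1}(X) = X$ for every $i \geq 1$. But this is automatic: each $f_i$ is a continuous self-map of the nonempty compact space $X$, so $f_i^{-1}(X) = X$ trivially. Hence \Cref{compare} applies verbatim with $K = X$.

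Once this observation is made, \Cref{compare} gives
\[
D(f_{1,\infty},X) \leq \underline{D}_X(f_{1,\infty}) \leq \overline{D}_X(f_{1,\infty})
\]
directly. For completeness one can also recall that the second inequality needs nothing new: it already follows from $\underline{h}_X(f_{1,\infty},s) \leq \overline{h}_X(f_{1,\infty},s)$ for every $s > 0$ (observed just after the definition of the upper and lower $s$-topological entropy), which forces $\underline{D}_X(f_{1,\infty}) \leq \overline{D}_X(f_{1,\infty})$. So the only content to transport is the first inequality $D(f_{1,\infty},X) \leq \underline{D}_X(f_{1,\infty})$.

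I do not expect any genuine obstacle here. All of the substantive work has already been done: \Cref{l:Nrn} controls $\mathcal{N}(\{X(\mathbf{U}):\mathbf{U}\in\mathcal{G}\})$ by a separated-set count on the invariant set, \Cref{P:distance} lets one pass to covers of small diameter, and these are combined in \Cref{compare}. The present statement is merely the instance $K = X$, so the proposed proof is a one-line deduction citing \Cref{compare}.
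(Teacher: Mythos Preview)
Your proposal is correct and matches the paper's approach exactly: the corollary is stated immediately after \Cref{compare} with no separate proof, precisely because it is the special case $K=X$, which is trivially $f_{1,\infty}$-invariant. Your observation that $f_i^{-1}(X)=X$ for each $i$ is all that is needed.
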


\begin{theorem}\label{T:htopD}
Let $(X,f_{1,\infty})$ be an NDS, and $K$ be a nonempty subset of $X$.
\begin{enumerate}
\item If $h_{top}(f_{1,\infty},K)<+\infty$, then $0\leq \overline{D}_K(f_{1,\infty})\leq 1;$

\item if $0\leq \overline{D}_K(f_{1,\infty})<1$, then $h_{top}(f_{1,\infty},K)<+\infty;$

\item if $D(f_{1,\infty},K,1)<+\infty$, then $0\leq D(f_{1,\infty},K)\leq 1;$

\item if $0\leq D(f_{1,\infty},K)<1$, then $D(f_{1,\infty},K,1) <+\infty$.
\end{enumerate}
\end{theorem}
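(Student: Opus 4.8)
The plan is to treat the four assertions as two dual pairs: items (1)--(2) relate the classical upper entropy dimension $\overline{D}_K(f_{1,\infty})$ to the classical topological entropy $h_{top}(f_{1,\infty},K)=\overline{h}_K(f_{1,\infty},1)$, and items (3)--(4) relate the Pesin entropy dimension $D(f_{1,\infty},K)$ to the Pesin topological entropy $D(f_{1,\infty},K,1)$. In each case the proof rests on an elementary comparison between the weights used at parameter $s$ and at parameter $1$. On the classical side one writes $\frac{1}{n^{s}}=n^{1-s}\cdot\frac{1}{n}$ and notes that $n^{1-s}\to 0$ when $s>1$, while $\frac{1}{n^{s}}\ge\frac{1}{n}$ when $s<1$. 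On the Pesin side one uses that $m(\mathbf{U})^{s}\ge N^{s-1}m(\mathbf{U})$ whenever $m(\mathbf{U})\ge N$ and $s>1$, and dually $m(\mathbf{U})^{s}\le m(\mathbf{U})$ whenever $m(\mathbf{U})\ge 1$ and $s<1$. Since all four quantities are defined as suprema over finite open covers, a finiteness or vanishing hypothesis on a supremum will be used cover-by-cover and then reassembled by taking suprema again; the lower bounds $\overline{D}_K(f_{1,\infty})\ge 0$ and $D(f_{1,\infty},K)\ge 0$ are immediate from the definitions.

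For (1): fix $s>1$ and a finite open cover $\mathcal{U}$. Since $h_{top}(f_{1,\infty},K)<+\infty$ we have $h_{top}(f_{1,\infty},K,\mathcal{U})<+\infty$, so $\frac1n\log\mathcal{N}(\mathcal{U}_1^n|_K)$ is a bounded sequence; multiplying it by $n^{1-s}\to 0$ gives $\overline{h}_K(f_{1,\infty},s,\mathcal{U})=0$, hence $\overline{h}_K(f_{1,\infty},s)=0$ and $\overline{D}_K(f_{1,\infty})\le s$; letting $s\downarrow 1$ proves (1). For (2): if $\overline{D}_K(f_{1,\infty})<1$, pick $s$ with $\overline{D}_K(f_{1,\infty})<s<1$; by the jump behaviour of $s\mapsto\overline{h}_K(f_{1,\infty},s)$ we get $\overline{h}_K(f_{1,\infty},s)=0$, so $\overline{h}_K(f_{1,\infty},s,\mathcal{U})=0$ for every $\mathcal{U}$, i.e.\ $\frac{1}{n^{s}}\log\mathcal{N}(\mathcal{U}_1^n|_K)\to 0$; since $\frac1n\log\mathcal{N}(\mathcal{U}_1^n|_K)\le\frac{1}{n^{s}}\log\mathcal{N}(\mathcal{U}_1^n|_K)$ for $n\ge 1$, this forces $h_{top}(f_{1,\infty},K,\mathcal{U})=0$, and taking the supremum over $\mathcal{U}$ gives $h_{top}(f_{1,\infty},K)=0<+\infty$.

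For (3): set $C:=D(f_{1,\infty},K,1)<+\infty$, and fix $s>1$, a cover $\mathcal{U}$ and $\alpha>0$. Choose $N_0$ with $\alpha N_0^{s-1}\ge C+1$; then for $N\ge N_0$ and any admissible collection of strings $\mathcal{G}$ at level $N$ one has $e^{-\alpha m(\mathbf{U})^{s}}\le e^{-(C+1)m(\mathbf{U})}$ for all $\mathbf{U}\in\mathcal{G}$, so $M(f_{1,\infty},K,s,\mathcal{U},\alpha,N)\le M(f_{1,\infty},K,1,\mathcal{U},C+1,N)$. Letting $N\to\infty$ and using $C+1>D(f_{1,\infty},K,1,\mathcal{U})$ gives $m(f_{1,\infty},K,s,\mathcal{U},\alpha)=0$; as $\alpha>0$ was arbitrary, $D(f_{1,\infty},K,s,\mathcal{U})\le 0$, hence $D(f_{1,\infty},K,s)=0$ (the reverse inequality $D(f_{1,\infty},K,s)\ge 0$ being already recorded), so $D(f_{1,\infty},K)\le s$ for every $s>1$, which proves (3). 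For (4): if $D(f_{1,\infty},K)<1$, pick $s$ with $D(f_{1,\infty},K)<s<1$; then $D(f_{1,\infty},K,s)=0$, so $D(f_{1,\infty},K,s,\mathcal{U})\le 0$ for every $\mathcal{U}$. For $N\ge 1$ the inequality $m(\mathbf{U})^{s}\le m(\mathbf{U})$ yields $M(f_{1,\infty},K,1,\mathcal{U},\alpha,N)\le M(f_{1,\infty},K,s,\mathcal{U},\alpha,N)$ for every $\alpha>0$; letting $N\to\infty$ and using $\alpha>0\ge D(f_{1,\infty},K,s,\mathcal{U})$ gives $m(f_{1,\infty},K,1,\mathcal{U},\alpha)=0$, so $D(f_{1,\infty},K,1,\mathcal{U})\le\alpha$; since $\alpha>0$ and $\mathcal{U}$ are arbitrary, $D(f_{1,\infty},K,1)\le 0<+\infty$.

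I do not anticipate a genuine obstacle: every step is a monotonicity argument or a one-line comparison estimate. The points that require the most care are, first, using the hypotheses — which are phrased for a supremum over covers — correctly at the level of a single cover and then recombining by taking suprema; and second, keeping the direction of the elementary inequalities ($n^{1-s}\to 0$ versus $n^{1-s}\to\infty$, and the comparison between $m(\mathbf{U})^{s}$ and $m(\mathbf{U})$) aligned with whether $s>1$ or $s<1$. One should also restrict attention to $N\ge 1$ so that the comparison $m(\mathbf{U})^{s}\le m(\mathbf{U})$ is legitimate for all strings in an admissible collection, which is automatic when passing to the limit $N\to\infty$.
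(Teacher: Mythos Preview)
Your proof is correct. Parts (1) and (2) follow the same idea as the paper --- rewrite $\frac{1}{n^{s}}=n^{1-s}\cdot\frac{1}{n}$ and read off the consequences --- though you argue (2) directly (and in fact obtain the sharper $h_{top}(f_{1,\infty},K)=0$) where the paper argues the contrapositive. For (3) and (4) your route is genuinely different. The paper writes $e^{-\alpha m(\mathbf{U})^{s}}=e^{-\alpha m(\mathbf{U})}\cdot e^{-\alpha m(\mathbf{U})^{s-1}}$, bounds the resulting sum by a product of sums, takes infima to get $M(\cdot,s,\alpha,N)\le M(\cdot,1,\alpha,N)\cdot M(\cdot,s-1,\alpha,N)$, and then asserts a product bound on the critical values; but the displayed identity is false (since $m^{s}\neq m+m^{s-1}$), and the passage from an inequality on the $M$-quantities to a product of their critical values is unjustified. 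You bypass all of this: for $s>1$ you use $\alpha m(\mathbf{U})^{s}\ge\alpha N_{0}^{\,s-1}m(\mathbf{U})\ge(C+1)m(\mathbf{U})$ once $m(\mathbf{U})\ge N_{0}$, which compares $M(\cdot,s,\mathcal{U},\alpha,N)$ directly with $M(\cdot,1,\mathcal{U},C+1,N)$ evaluated strictly above the $s=1$ critical value; for $s<1$ you use $m(\mathbf{U})^{s}\le m(\mathbf{U})$ to compare $M(\cdot,1,\mathcal{U},\alpha,N)$ with $M(\cdot,s,\mathcal{U},\alpha,N)$. This termwise comparison is elementary and rigorous, and your argument for (4) even yields the stronger conclusion $D(f_{1,\infty},K,1)=0$.
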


\begin{proof}
(1) For any finite open cover $\mathcal{U}$ of $X$, since 
\[
h_{top}(f_{1,\infty},K,\mathcal{U})=\mathop{\lim\sup}_{n\to\infty}\frac{1}{n}\log\mathcal{N}(\mathcal{U}_1^n|_K)\leq h_{top}(f_{1,\infty},X,\mathcal{U})
\] 
and
\begin{align}
\overline{h}_K(f_{1,\infty},s,\mathcal{U})&=\mathop{\lim\sup}_{n\to\infty}\frac{1}{n^s}\log\mathcal{N}(\mathcal{U}_1^n|_K) \nonumber\\
&=\mathop{\lim\sup}_{n\to\infty}\frac{1}{n^{s-1}}\cdot\left[\frac{1}{n}\log\mathcal{N}(\mathcal{U}_1^n|_K)\right].  \label{olh}
\end{align}
Thus if $h_{top}(f_{1,\infty},K,\mathcal{U})<+\infty$ and $s>1$, then $\overline{h}_K(f_{1,\infty},s,\mathcal{U})\leq \mathop{\lim\sup}_{n\to\infty}\frac{1}{n^{s-1}}\cdot \mathop{\lim\sup}_{n\to\infty}\frac{1}{n}\log \mathcal{N}(\mathcal{U}_1^n|_K)=0$, which immediately implies $\overline{D}_K(f_{1,\infty})\leq 1$.

(2) If $h_{top}(f_{1,\infty},K)=+\infty$, for each $s < 1$, by \eqref{olh}, then 
\[\overline{h}_K(f_{1,\infty},s,\mathcal{U})=\mathop{\lim\sup}_{n\to\infty}n^{s-1}\cdot\left[\frac{1}{n}\log\mathcal{N}(\mathcal{U}_1^n|_K)\right]=+\infty
\] 
This implies that $\overline{D}_K(f_{1,\infty})\geq 1$. Equivalently, $0\leq \overline{D}_K(f_{1,\infty})<1$ implies $0\leq h_{top}(f_{1,\infty},K)<+\infty$.

(3) By ~\eqref{M}, for any $\alpha\in \mathbb{R}$ and $N>0$,
\begin{align}
\sum_{\mathbf{U}\in \mathcal{G}} e^{-\alpha m(\mathbf{U})^s}&=\sum_{\mathbf{U}\in \mathcal{G}} e^{-\alpha [m(\mathbf{U})\cdot m(\mathbf{U})^{s-1}]}\nonumber \\ 
&=\sum_{\mathbf{U}\in \mathcal{G}} e^{-\alpha m(\mathbf{U})}\cdot e^{-\alpha m(\mathbf{U})^{s-1}}\label{leqM}.
\end{align}
If $D(f_{1,\infty},K,1)<+\infty$ and $s>1$, then 
\[
\sum_{\mathbf{U}\in \mathcal{G}} e^{-\alpha m(\mathbf{U})^s} \leq \sum_{\mathbf{U}\in \mathcal{G}} e^{-\alpha m(\mathbf{U})}\cdot \sum_{\mathbf{U}\in \mathcal{G}} e^{-\alpha m(\mathbf{U})^{s-1}}
\] 
Taking the infimum over all finite or countable collections of strings $\mathcal{G}$ on both sides such that $m(\mathbf{U})\geq N$ for all $\mathbf{U}\in \mathcal{G}$ and $\mathcal{G}$ covers $K$, then
\[
M(f_{1,\infty},K,s,\mathcal{U},\alpha,N)\leq M(f_{1,\infty},K,1,\mathcal{U},\alpha,N)\cdot M(f_{1,\infty},K,s-1,\mathcal{U},\alpha,N).
\] 
Now by the definition of Pesin entropy dimension, 
\[
D(f_{1,\infty},K,s)\leq D(f_{1,\infty},K,1)\cdot D(f_{1,\infty},K,s-1).
\]
Thus $D(f_{1,\infty},K,1)<+\infty$ implies $D(f_{1,\infty},K,s)<+\infty$. This means that $D(f_{1,\infty},K,s)=0$ and in this case we have $s>D(f_{1,\infty},K)$. Hence $D(f_{1,\infty},K)\leq 1$.

(4) If $D(f_{1,\infty},K,1)=Ph(f_{1,\infty},K)=+\infty$ and $s<1$, by the fact that $e^{-\alpha m(\mathbf{U})^{s-1}}>0$, and taking the infimum over all finite or countable collections of strings $\mathcal{G}$ on both sides of ~\eqref{leqM} such that $m(\mathbf{U})\geq N$ for all $\mathbf{U}\in \mathcal{G}$ and $\mathcal{G}$ covers $K$, then following a similar discussion we can check that $D(f_{1,\infty},K,s)=+\infty$. This implies that $D(f_{1,\infty},K)\geq 1$. Equivalently, we have $0\leq D(f_{1,\infty},K)<1$ implies $D(f_{1,\infty},K,1)<+\infty$.
\end{proof}

\begin{remark}
(1) and (2) are not equivalent (even for $K=X$), since there are examples shows that the topological entropy dimension could be $1$ but the topological entropy be zero, see ~\cite[Theorem 3.6]{CL2010} for more details. Similarly, (3) and (4) are also not equivalent. 
\end{remark}

\begin{lemma} \label{pesinentropy}
Let $(X,d)$ be a compact metric space, and $(X,f_{1,\infty})$ be an NDS. Then for any subset $K\subseteq X$,
\[
0\leq D(f_{1,\infty},K,1)=Ph(f_{1,\infty},K)\leq h_{top}(f_{1,\infty},K)\leq h_{top}(f_{1,\infty}).
\]
\end{lemma}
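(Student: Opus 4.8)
The plan is to prove the chain working outward from the middle. The equality $D(f_{1,\infty},K,1)=Ph(f_{1,\infty},K)$ is simply the notational identification recorded just after the definition of the Pesin $s$-topological entropy, so nothing has to be done there, and nonnegativity is immediate: for $\alpha<0$ every string $\mathbf U$ with $m(\mathbf U)\ge N$ has $e^{-\alpha m(\mathbf U)}\ge e^{|\alpha|N}$, so any covering collection forces $M(f_{1,\infty},K,1,\mathcal U,\alpha,N)\ge e^{|\alpha|N}\to\infty$ as $N\to\infty$; hence $m(f_{1,\infty},K,1,\mathcal U,\alpha)=+\infty$, $D(f_{1,\infty},K,1,\mathcal U)\ge 0$, and taking the supremum over covers gives $D(f_{1,\infty},K,1)\ge 0$ (alternatively this is part of the earlier Proposition on $s\mapsto D(f_{1,\infty},K,s)$). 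The last inequality $h_{top}(f_{1,\infty},K)\le h_{top}(f_{1,\infty})$ is also formal: by the first item of \Cref{onK}, $\mathcal N(\mathcal U_1^n|_K)=\mathcal N((\mathcal U_1^n)|_K)\le\mathcal N(\mathcal U_1^n)$ for every finite open cover $\mathcal U$ of $X$ and every $n\ge1$, so dividing by $n$, taking $\limsup$ and then the supremum over $\mathcal U$ yields $h_{top}(f_{1,\infty},K)\le h_{top}(f_{1,\infty},X)=h_{top}(f_{1,\infty})$.

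The substantive step is $Ph(f_{1,\infty},K)=D(f_{1,\infty},K,1)\le h_{top}(f_{1,\infty},K)$. Fix a finite open cover $\mathcal U$ of $X$; the goal is $D(f_{1,\infty},K,1,\mathcal U)\le h_{top}(f_{1,\infty},K,\mathcal U)$. The key observation is that a minimal subcover of $\mathcal U_1^n|_K$ can be converted into a string collection: each of its members has the form $\big(\bigcap_{j=0}^{n-1}f_1^{-j}(U_{i_j})\big)\cap K=X(\mathbf U)\cap K$ for a string $\mathbf U=(U_{i_0},\dots,U_{i_{n-1}})\in S_n(\mathcal U)$, using the identity $X(\mathbf U)=\bigcap_{j=0}^{n-1}f_1^{-j}(U_{i_j})$ from the preliminaries; the collection $\mathcal G_n$ of these strings then covers $K$, has cardinality at most $\mathcal N(\mathcal U_1^n|_K)$, and consists of strings of length exactly $n$. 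Consequently, for any $\alpha\in\mathbb R$ and any $n\ge N$,
\[
M(f_{1,\infty},K,1,\mathcal U,\alpha,N)\le\sum_{\mathbf U\in\mathcal G_n}e^{-\alpha m(\mathbf U)}\le\mathcal N(\mathcal U_1^n|_K)\,e^{-\alpha n}.
\]
Taking the infimum over $n\ge N$ and letting $N\to\infty$ gives $m(f_{1,\infty},K,1,\mathcal U,\alpha)\le\liminf_{n\to\infty}\mathcal N(\mathcal U_1^n|_K)e^{-\alpha n}$.

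Now if $\alpha>h_{top}(f_{1,\infty},K,\mathcal U)=\limsup_{n\to\infty}\tfrac1n\log\mathcal N(\mathcal U_1^n|_K)$, pick $\beta$ with $h_{top}(f_{1,\infty},K,\mathcal U)<\beta<\alpha$; then $\mathcal N(\mathcal U_1^n|_K)\le e^{\beta n}$ for all large $n$, so $\mathcal N(\mathcal U_1^n|_K)e^{-\alpha n}\le e^{(\beta-\alpha)n}\to0$, whence $m(f_{1,\infty},K,1,\mathcal U,\alpha)=0$. Therefore $D(f_{1,\infty},K,1,\mathcal U)=\inf\{\alpha:m(f_{1,\infty},K,1,\mathcal U,\alpha)=0\}\le h_{top}(f_{1,\infty},K,\mathcal U)$, and taking the supremum over all finite open covers $\mathcal U$ of $X$ yields $D(f_{1,\infty},K,1)\le h_{top}(f_{1,\infty},K)$, completing the chain. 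I do not expect a genuine obstacle; the only care needed is the bookkeeping in the displayed inequality — that every string used has length $\ge N$ (which is why one keeps $n\ge N$ rather than fixing $n$) and that the passage from a minimal subcover of $\mathcal U_1^n|_K$ to the string collection $\mathcal G_n$ is exact, which is guaranteed by $X(\mathbf U)=\bigcap_{j=0}^{n-1}f_1^{-j}(U_{i_j})$.
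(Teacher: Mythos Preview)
Your argument is correct. The paper itself does not give a proof but merely cites \cite[Theorem~3.1]{li2015} and \cite[Theorem~3.10]{LY2023}; the standard proof in those references is precisely the one you wrote out---build a string collection $\mathcal G_n\subseteq S_n(\mathcal U)$ from a minimal subcover of $\mathcal U_1^n|_K$, bound $M(f_{1,\infty},K,1,\mathcal U,\alpha,N)$ by $\mathcal N(\mathcal U_1^n|_K)e^{-\alpha n}$ for $n\ge N$, and conclude $D(f_{1,\infty},K,1,\mathcal U)\le h_{top}(f_{1,\infty},K,\mathcal U)$---so your approach is essentially the same as the intended one, just made explicit.
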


\begin{proof}
The proof follows directly from ~\cite[Theorem 3.1]{li2015} and a small modification of ~\cite[Theorem 3.10]{LY2023}, so we omit it.
\end{proof}

\begin{remark}
The condition in (1) and (3) of ~\Cref{T:htopD} could be strengthen to $h_{top}(f_{1,\infty},X)<+\infty$. Rodrigues and Acevedo ~\cite{RA2022} introduced the metric mean dimension for nonautonomous dynamical systems, they showed that if $h_{top}(f_{1,\infty},X)$ is finite then its metric mean dimension $\textnormal{mdim}_M(f_{1,\infty},X)=0$. So by \Cref{pesinentropy}, we may strengthen (1) and (3) of \Cref{T:htopD} as follows: If $\textnormal{mdim}_M(f_{1,\infty},X)=0$, then $0\leq D(f_{1,\infty},K)\leq \overline{D}_K(f_{1,\infty})\leq 1$.
\end{remark}

\begin{theorem}\label{coincide}
Let $(X,f_{1,\infty})$ be an NDS, and $K$ be a nonempty subset of $X$. 
\begin{enumerate}
\item If $0< D(f_{1,\infty},K,1)\leq h_{top}(f_{1,\infty},K)<+\infty$ and $K$ is $f_{1,\infty}$-invariant, then 
\[
D(f_{1,\infty},K)=D_K(f_{1,\infty})=1;
\]

\item if $0< D(f_{1,\infty},K,1)\leq h_{top}(f_{1,\infty},X)<+\infty$, then 
\[
D(f_{1,\infty},X)=D_X(f_{1,\infty})=1.
\]  
\end{enumerate}
\end{theorem}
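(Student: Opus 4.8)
The plan is to pin both entropy dimensions to the value $1$ by squeezing the chain $D(f_{1,\infty},K)\leq \underline{D}_K(f_{1,\infty})\leq \overline{D}_K(f_{1,\infty})$ between a lower bound of $1$ coming from positivity of the Pesin $s$-entropy at $s=1$ and an upper bound of $1$ coming from finiteness of $h_{top}(f_{1,\infty},K)$; statement (2) will then follow from (1) applied to $K=X$. For the lower bound, observe that by hypothesis $D(f_{1,\infty},K,1)$ is a finite, strictly positive number, since $0<D(f_{1,\infty},K,1)\leq h_{top}(f_{1,\infty},K)<+\infty$. Recall that $s\mapsto D(f_{1,\infty},K,s)$ is non-increasing and, away from the single critical exponent $s_0=D(f_{1,\infty},K)$, takes only the values $+\infty$ (for $s<s_0$) and $0$ (for $s>s_0$); this is the content of the proposition stated just before the definition of $D(f_{1,\infty},K)$. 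Since $D(f_{1,\infty},K,1)\notin\{0,+\infty\}$, the exponent $s=1$ is neither strictly below nor strictly above $s_0$, so $D(f_{1,\infty},K)=s_0=1$; in particular $D(f_{1,\infty},K)\geq 1$.

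For the upper bound, finiteness of $h_{top}(f_{1,\infty},K)$ together with \Cref{T:htopD}(1) gives $\overline{D}_K(f_{1,\infty})\leq 1$. As $K$ is $f_{1,\infty}$-invariant, \Cref{compare} applies and produces
\[
1=D(f_{1,\infty},K)\leq \underline{D}_K(f_{1,\infty})\leq \overline{D}_K(f_{1,\infty})\leq 1,
\]
forcing $\underline{D}_K(f_{1,\infty})=\overline{D}_K(f_{1,\infty})=1$. Hence the classical entropy dimension $D_K(f_{1,\infty})$ is well defined and equals $1$, and combined with $D(f_{1,\infty},K)=1$ this establishes (1). For (2) we simply take $K=X$: it is $f_{1,\infty}$-invariant because $f_i^{-1}(X)=X$ for every $i$, one has $h_{top}(f_{1,\infty},X)=h_{top}(f_{1,\infty})$, and $D(f_{1,\infty},X,1)\leq h_{top}(f_{1,\infty},X)$ by \Cref{pesinentropy}, so the hypotheses of (1) hold for $K=X$ and yield $D(f_{1,\infty},X)=D_X(f_{1,\infty})=1$.

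Once \Cref{compare}, \Cref{T:htopD}, \Cref{pesinentropy} and the monotonicity/jump property of $s\mapsto D(f_{1,\infty},K,s)$ are in hand the argument is essentially bookkeeping; the only step requiring genuine care is the lower bound, where one must exploit that the Pesin $s$-entropy can take a finite, nonzero value at \emph{exactly one} exponent, so that the mere positivity of $D(f_{1,\infty},K,1)$ forces the critical exponent to be \emph{equal} to $1$ rather than just bounding $D(f_{1,\infty},K)$ from below.
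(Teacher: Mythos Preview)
Your argument is correct and follows essentially the same route as the paper: use the jump property of $s\mapsto D(f_{1,\infty},K,s)$ together with $0<D(f_{1,\infty},K,1)<+\infty$ to force the Pesin critical exponent to equal $1$, invoke \Cref{T:htopD}(1) for the upper bound $\overline{D}_K(f_{1,\infty})\leq 1$, and squeeze via the chain from \Cref{compare}. For part~(2) the paper likewise just specializes to $K=X$; note that the hypothesis in~(2) still involves $D(f_{1,\infty},K,1)$, so to be fully precise you should observe $D(f_{1,\infty},K,1)\leq D(f_{1,\infty},X,1)$ by monotonicity (\Cref{p:properties}(3)) before applying~(1) with $K=X$.
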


\begin{proof}
(1) If $0< D(f_{1,\infty},K,1)\leq h_{top}(f_{1,\infty},K)<+\infty$, then by ~\Cref{compare}, ~\Cref{T:htopD} and ~\Cref{pesinentropy}, 
\[
0\leq D(f_{1,\infty},K)\leq \underline{D}_K(f_{1,\infty}) \leq \overline{D}_K(f_{1,\infty})\leq 1.
\] 
Note that $D(f_{1,\infty},K,1)>0$ and ${D}(f_{1,\infty},K,s)$ jumps from $+\infty$ to $0$ at the unique critical value, so it must corresponds to $s=1$. Thus $D(f_{1,\infty},K)=1$, and it follows that $D(f_{1,\infty},K)=\underline{D}_K(f_{1,\infty})=\overline{D}_K(f_{1,\infty})=1$.

(2) Since $X$ is naturally $f_{1,\infty}$-invariant, thus this result follows from (1).
\end{proof}

\begin{corollary}\label{corofinite}
Let $(X,f_{1,\infty})$ be an NDS, and $K$ be a nonempty $f_{1,\infty}$-invariant subset of $X$. 
\begin{enumerate}
	\item If $0< D(f_{1,\infty},K,1)<+\infty$, then $D(f_{1,\infty},K)=1;$
	\item if $0<h_{top}(f_{1,\infty},K)<+\infty$, then $D(f_{1,\infty},K)=1$.
\end{enumerate}
\end{corollary}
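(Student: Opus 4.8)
The plan is to derive both statements from \Cref{T:htopD}, \Cref{pesinentropy}, and the elementary fact (recorded in the proposition preceding the definition of $D(f_{1,\infty},K)$) that $s\mapsto D(f_{1,\infty},K,s)$ equals $+\infty$ for $s<D(f_{1,\infty},K)$ and equals $0$ for $s>D(f_{1,\infty},K)$, and then to reduce part~(2) to part~(1). The $f_{1,\infty}$-invariance of $K$ will be used only in part~(2).

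For part~(1), assume $0<D(f_{1,\infty},K,1)<+\infty$. Finiteness of $D(f_{1,\infty},K,1)$ together with \Cref{T:htopD}(3) gives $D(f_{1,\infty},K)\le 1$. For the reverse inequality, observe that if $D(f_{1,\infty},K)<1$ then $D(f_{1,\infty},K,1)=0$ by the jump property just recalled, contradicting the hypothesis; hence $D(f_{1,\infty},K)\ge 1$, and therefore $D(f_{1,\infty},K)=1$. This is essentially the argument used for \Cref{coincide}(1), the only difference being that here the bound $D(f_{1,\infty},K)\le 1$ is obtained directly from finiteness of $D(f_{1,\infty},K,1)$ via \Cref{T:htopD}(3), so that neither finiteness of $h_{top}(f_{1,\infty},K)$ nor invariance of $K$ is needed.

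For part~(2), assume $0<h_{top}(f_{1,\infty},K)<+\infty$; the goal is to establish $0<D(f_{1,\infty},K,1)<+\infty$ and then to invoke part~(1) (equivalently \Cref{coincide}(1), whose hypotheses are then all satisfied). The upper bound is immediate from \Cref{pesinentropy}, namely $D(f_{1,\infty},K,1)=Ph(f_{1,\infty},K)\le h_{top}(f_{1,\infty},K)<+\infty$. The substantive point is the strict positivity $D(f_{1,\infty},K,1)=Ph(f_{1,\infty},K)>0$, which does not follow from \Cref{pesinentropy} alone, since that lemma only gives the inequality $Ph(f_{1,\infty},K)\le h_{top}(f_{1,\infty},K)$. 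To obtain it I would use the Bowen--Pesin identification on invariant sets, namely that for an $f_{1,\infty}$-invariant $K$ one actually has $Ph(f_{1,\infty},K)=h_{top}(f_{1,\infty},K)$ (this is where the invariance hypothesis genuinely enters; cf.\ \cite{li2015} and the equivalence of Bowen and Pesin entropy mentioned in the introduction), so that positivity of $h_{top}(f_{1,\infty},K)$ yields positivity of $Ph(f_{1,\infty},K)$. Alternatively, the positivity can be pushed across by a direct covering estimate, using that the sets $X(\mathbf{U})$ with $m(\mathbf{U})=n$ are precisely the members of $\mathcal{U}_1^n$, combined with a length-by-length analysis of $M(f_{1,\infty},K,1,\mathcal{U},\alpha,N)$; this route is more delicate because of the variable string lengths. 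Once $0<D(f_{1,\infty},K,1)<+\infty$ is in hand, part~(1) yields $D(f_{1,\infty},K)=1$. The entire difficulty is concentrated in this positivity step; everything else is routine bookkeeping over the results quoted above.
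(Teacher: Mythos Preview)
Your treatment of part~(1) is correct and coincides with the argument already embedded in the proof of \Cref{coincide}(1): since $D(f_{1,\infty},K,s)$ takes only the values $0$ and $+\infty$ away from its critical point, the hypothesis $0<D(f_{1,\infty},K,1)<+\infty$ forces that critical point to equal~$1$. You are also right that invariance is not used here; the paper's hypothesis is simply stronger than necessary for this half.

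For part~(2) the paper supplies no proof, and you have located exactly the nontrivial step: passing from $h_{top}(f_{1,\infty},K)>0$ to $Ph(f_{1,\infty},K)=D(f_{1,\infty},K,1)>0$. The only result in the paper relating these two quantities is \Cref{pesinentropy}, which gives the inequality $Ph\le h_{top}$, i.e.\ the wrong direction for this purpose; and \Cref{compare} likewise only bounds $D(f_{1,\infty},K,s)$ from above by $\underline{h}_K(f_{1,\infty},s)$. So your reduction of~(2) to~(1) via $Ph(f_{1,\infty},K)=h_{top}(f_{1,\infty},K)$ on invariant sets is the right strategy and is what makes the invariance hypothesis relevant, but be aware that this equality is neither stated nor proved in the present paper; it must be imported from the literature (cf.\ \cite{li2015}), and the standard Bowen--Pesin identification is typically formulated for compact invariant sets, so some care with the generality of $K$ is warranted. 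Your alternative ``direct covering estimate'' would run into the same difficulty, since variable string lengths are precisely what can make $Ph$ strictly smaller than $h_{top}$ on non-compact sets. In short: your diagnosis is accurate, your plan matches what the paper evidently intends, and the one substantive input you need for~(2) lies outside the results actually established here.
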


The results of this study, as indicated by \Cref{T:htopD}, \Cref{coincide} and \Cref{corofinite}, suggest that both the classical topological entropy dimension and the Pesin topological entropy dimension should be primarily employed to distinguish the zero topological entropy systems. In the remainder of this section, several examples will be presented where the topological entropy is zero, yet the topological entropy dimensions vary. For the sake of convenience, the discussion will be limited on the whole set $X$. It is noteworthy that, by ~\cite[Theorem 3.6]{CL2010}, even the topological entropy is zero, the topological entropy dimension may still be full. 

\begin{example}\label{example1}
Let the phase space $X$ be finite and $f_{1,\infty}$ be a sequence of continuous selfmaps on $X$. Then $h_{top}(f_{1,\infty})=D(f_{1,\infty},X,1)=0$ and $D(f_{1,\infty},X) = D_X(f_{1,\infty})=0$. 

This assertion can be verified as follows: Since the minimal cardinality of any $(n,\varepsilon)$-spanning set of $X$ is smaller than the cardinality of $X$, then $r_n(f_{1,\infty},X,\varepsilon)<+\infty$. It is clear that $h_{top}(f_{1,\infty})=0$. And by \Cref{pesinentropy}, then $D(f_{1,\infty},X,1)=Ph(f_{1,\infty},X)=0$. By \Cref{l:hd} and \Cref{pesinentropy}, then for any $0 \leq s \leq 1$, we also have $\overline{h}_X(f_{1,\infty},s)=0$, and this further implies that $\overline{D}_X(f_{1,\infty})=0$. By \Cref{corollaryX}, it follows that $D(f_{1,\infty},X) = \underline{D}_X(f_{1,\infty})=\overline{D}_X(f_{1,\infty})=0$.
\end{example}

\begin{example}
Let $X$ be a compact topological space, and each $f_i\in f_{1,\infty}$ be continuous contractive selfmaps on $X$. Then $h_{top}(f_{1,\infty})=D(f_{1,\infty},X,1)=0$ and $D(f_{1,\infty},X) = D_X(f_{1,\infty})=0$. 

This assertion can be verified as follows: Let $\mathcal{U}$ be an open cover of $X$ with Lebesgue number $\delta$. By \cite[Theorem 7.7]{Wal2000a}, then $\mathcal{N}\left (\bigvee_{j=0}^{n-1}f_1^{-j}(\mathcal{U}) \right)\leq r_n(\delta/2,X)$. Note that each $f_i$ is contractive, thus $r_n(\delta/2,X)$ decrease as $n \to \infty$ and has a finite upper bound. Similar to \Cref{example1}, we can check that $h_{top}(f_{1,\infty})=0$, and $D(f_{1,\infty},K,1)=Ph(f_{1,\infty},X)=0$. Similarly, we also have $D(f_{1,\infty},X) = \underline{D}_X(f_{1,\infty})=\overline{D}_X(f_{1,\infty})=0$.
\end{example}

\begin{example} Let $X=S^1$, and the sequence of continuous selfmaps $f_{1,\infty}$ on $X$ be defined as follows: Fix $m\in \mathbb{N}$ and $s\in (0,1)$, for any $k \geq 1$, let
\[
f_{k}(x)= \begin{cases}
 mx \pmod 1\quad & \text{if } [k^s]>[(k-1)^s],\\ 
\textnormal{id}_X \quad & \text{if }[k^s]=[(k-1)^s],
 \end{cases}
\] 
where $[k^s]$ denotes the integer part of $k^s$. Then $D(f_{1,\infty},X,1)=Ph(f_{1,\infty},X)=h_{top}(f_{1,\infty})=0$, while $D_{X}(f_{1,\infty})=s$. 

This example refers to \cite[Example 4.1]{LZW2019} and \cite[Example 3.7]{YWW2020} respectively. The assertion can be verified as follows: According to the aforementioned references, the topological entropy of $(X,f_{1,\infty})$ is $h_{top}(f_{1,\infty})=0$ and the topological entropy dimension is $D_{X}(f_{1,\infty})=s$. By \Cref{pesinentropy} and \Cref{corollaryX}, then we deduce that $D(f_{1,\infty},X,1)=Ph(f_{1,\infty},X)=0$ and $0\leq D(f_{1,\infty},X)\leq D_X(f_{1,\infty})=s$. To the best of our knowledge, there exists no general method to compute the exact value of Pesin topological entropy dimension. However, By the definition of entropy dimension, without much hard, the Pesin topological entropy dimension $D(f_{1,\infty},X)>0$. Consequently, we can conclude that the Pesin topological entropy dimension is contained within the interval $(0,s]$.
\end{example}

\section*{Acknowledgements}
The author would like to thank Professor Li Jian for his kindly suggestions, and also thank the respectful reviewers for their valuable comments for improvement of the article. This work was supported by the National Natural Science Foundation of China (No. 12271185) and the Basic and Applied Basic Research Foundation of Guangdong Province (No. 2022A1515011124 and No. 2023A1515110084). 

\bibliographystyle{siam} 
\bibliography{refs}
\end{document}